\newcommand{\Z}{\mathbb{Z}}
\newcommand{\R}{\mathbb{R}}
\newcommand{\C}{\mathbb{C}}
\renewcommand{\AA}{\mathcal{A}}
\newcommand{\RR}{\mathcal{R}}
\newcommand{\Sc}{\mathcal{S}}
\newcommand{\GL}{\mathrm{GL}}
\newcommand{\SL}{\mathrm{SL}}
\newcommand{\SO}{\mathrm{SO}}
\newcommand{\Sp}{\mathrm{Sp}}
\newcommand{\id}{\mathrm{id}}
\newcommand{\Rep}{\mathrm{Rep}}
\newcommand{\Irr}{\mathrm{Irr}}
\newcommand{\unit}{\mathrm{unit}}
\newcommand{\gp}{\mathrm{gp}}
\newcommand{\temp}{\mathrm{temp}}
\newcommand{\Ind}{\mathrm{Ind}}
\newcommand{\Jac}{\mathrm{Jac}}
\newcommand{\Frob}{\mathrm{Frob}}
\newcommand{\semi}{\mathrm{s.s.}}
\newcommand{\Cent}{\mathrm{Cent}}
\newcommand{\im}{\mathrm{Im}}
\newcommand{\iif}{&\quad&\text{if }}
\newcommand{\other}{&\quad&\text{otherwise}}
\newcommand{\resp}{resp.~}
\renewcommand{\1}{\mathbf{1}}
\newcommand{\pair}[1]{\left\langle #1 \right\rangle}
\newcommand{\half}[1]{\frac{#1}{2}}
\newcommand{\ub}[1]{\underline{#1}}
\newtheorem{thm}{Theorem}[section]
\newtheorem{lem}[thm]{Lemma}
\newtheorem{prop}[thm]{Proposition}
\newtheorem{cor}[thm]{Corollary}
\newtheorem{rem}[thm]{Remark}
\newtheorem{defi}[thm]{Definition}
\newtheorem{ex}[thm]{Example}
\title{On an algorithm to compute derivatives}
\author{Hiraku Atobe}
\date{}
\subjclass[2010]{Primary 22E50; Secondary 11S37}
\keywords{Jacquet module; Derivatives; Arthur packets}
\address{
Department of Mathematics, Hokkaido University,
Kita 10, Nishi 8, Kita-Ku, Sapporo, Hokkaido, 060-0810, Japan 
}
\email{
atobe@math.sci.hokudai.ac.jp
}
\begin{document}
\maketitle

\begin{abstract}
In this paper, we complete Jantzen's algorithm to compute the highest derivatives 
of irreducible representations of $p$-adic odd special orthogonal groups or symplectic groups. 
As an application, 
we give some examples of the Langlands data of the Aubert duals 
of irreducible representations,
which are in the integral reducibility case. 
\end{abstract}

\tableofcontents

\section{Introduction}
The notion of \textit{derivatives} of admissible representations 
is an influential ingredient in representation theory of $p$-adic classical groups. 
Let $G$ be a split odd special orthogonal group $\SO_{2n+1}(F)$, or a symplectic group $\Sp_{2n}(F)$. 
Denote by $P_d=M_dN_d$ the standard parabolic subgroup of $G$ 
with Levi $M_d = \GL_d(F) \times G_0$ for some classical group of the same type as $G$. 
Fix an irreducible unitary supercuspidal representation $\rho$ of $\GL_d(F)$. 

\begin{defi}
Let $\pi$ be a smooth admissible representation of $G$ of finite length. 

\begin{enumerate}
\item
If the semisimplification of the Jacquet module $\Jac_{P_d}(\pi)$ along $P_d$ is of the form
\[
\semi \Jac_{P_d}(\pi) = \bigoplus_i \tau_i \boxtimes \pi_i, 
\]
we define the \textbf{partial Jacquet module} $\Jac_{\rho|\cdot|^x}(\pi)$ 
with respect to $\rho|\cdot|^x$ for $x \in \R$ by
\[
\Jac_{\rho|\cdot|^x}(\pi) = \bigoplus_{\substack{i \\ \tau_i \cong \rho|\cdot|^x}} \pi_i.
\]

\item
For a positive integer $k$, 
the \textbf{$k$-th derivative} $D_{\rho|\cdot|^x}^{(k)}(\pi)$ with respect to $\rho|\cdot|^x$
is defined by 
\[
D_{\rho|\cdot|^x}^{(k)}(\pi) = \frac{1}{k!} 
\underbrace{\Jac_{\rho|\cdot|^{x}} \circ \dots \circ \Jac_{\rho|\cdot|^{x}}}_k (\pi). 
\]
When $D_{\rho|\cdot|^x}^{(k)}(\pi) \not= 0$ but $D_{\rho|\cdot|^x}^{(k+1)}(\pi) = 0$, 
we say that $D_{\rho|\cdot|^x}^{(k)}(\pi)$ is the \textbf{highest derivative} (with respect to $\rho|\cdot|^x$). 

\end{enumerate}
\end{defi}
It is important that 
when $\rho|\cdot|^x$ is not self-dual, 
the highest derivative $D_{\rho|\cdot|^x}^{(k)}(\pi)$ of an irreducible representation is also irreducible, 
and $\pi$ is a unique irreducible subrepresentation of the parabolically induced representation 
$(\rho|\cdot|^x)^k \rtimes D_{\rho|\cdot|^x}^{(k)}(\pi)$ (see Proposition \ref{highest}). 
By these properties, the highest derivatives have many applications. 
For example: 
\begin{itemize}
\item
the proofs of the How duality conjecture by M{\'i}nguez \cite{M} and Gan--Takeda \cite{GT}; 

\item
another proof of the classification the unitary dual of general linear groups by Lapid--M{\'i}nguez \cite{LM1}; 

\item
several results on the irreducibility of parabolically induced representations 
by Jantzen \cite{J2} and Lapid--Tadi{\'c} \cite{LT}. 
\end{itemize}
\vskip 10pt

Jantzen \cite{J0} and M{\'i}nguez \cite{M1} 
obtained a complete description of the highest derivatives of irreducible representations of 
general linear groups $\GL_n(F)$ independently. 
It gives an algorithm to compute the Zelevinsky involutions. 
Similarly, if one were able to compute the highest derivatives of all irreducible representations, 
one can compute the Aubert dual of any irreducible representation
(see Theorem \ref{vs} below).
Jantzen \cite{J3} suggested an algorithm to compute the highest derivatives of irreducible representations. 
We will recall this algorithm in \S \ref{alg} below. 
According to this algorithm, 
the computation of the highest derivative of an arbitrary irreducible representation of classical group
is reduced to the ones of irreducible representations of the form 
$L((\rho|\cdot|^{-x})^a, \Delta_\rho[x-1, -x]^b; T)$
which is a unique irreducible (Langlands) subrepresentation of the standard module 
$(\rho|\cdot|^{-x})^a \times \Delta_\rho[x-1, -x]^b \rtimes T$, 
where 
$\rho$ is an irreducible unitary supercuspidal representation of $\GL_d(F)$, 
$x$ is a positive half-integer, 
$\Delta_\rho[x-1,-x]$ is a Steinberg representation of $\GL_{2dx}(F)$, 
and $T$ is an irreducible tempered representation of a small classical group. 
For these notations, see \S \ref{rep.cl} below. 
\vskip 10pt

For the problem to determine the highest derivative of $L((\rho|\cdot|^{-x})^a, \Delta_\rho[x-1, -x]^b; T)$, 
Jantzen gave an explicit formula for $x=1/2$ (\cite[Theorem 3.3]{J3}). 
Also, he suggested a strategy for $x > 1$ (\cite[\S 3.4, Cases 2, 3]{J3}). 
This strategy is an induction on $x$, i.e., 
the computation for $x$ is reduced to the one for $x-1$. 
Hence it would be possible to solve this problem when $x \in (1/2)\Z \setminus \Z$. 
Using this strategy, he computed some examples of the Aubert duals of certain irreducible representations
in the half-integral reducibility case (\cite[\S 4]{J3}). 
However, since \cite[Theorem 3.3]{J3} is already complicated (there are 5 cases), 
it is very hard to proceed with this induction. 
\vskip 10pt

In this paper, 
we give an algorithm (Theorem \ref{main}) to compute
the highest derivative of $L((\rho|\cdot|^{-x})^a, \Delta_\rho[x-1, -x]^b; T)$. 
One can also write down an explicit formula (Corollary \ref{ab}). 
This corollary together with Corollary \ref{k'} completes Jantzen's algorithm. 
When $a=0$, one might prove Corollary \ref{ab} by a similar argument to \cite[Theorem 3.3]{J3}, 
but we will give another argument. 
A new ingredient for the proof is two results of Xu on $A$-packets \cite{X2, X3}. 
The first is an estimation when derivatives of unitary representations of \textit{Arthur type} are nonzero
(Lemma \ref{xu}).
Namely, for a specific tuple $(a,b,T)$, we will find a ``good'' $A$-parameter $\psi$
such that $L((\rho|\cdot|^{-x})^a, \Delta_\rho[x-1, -x]^b; T)$ belongs to the $A$-packet $\Pi_\psi$
(e.g., see Example \ref{ex.xu1} and Proposition \ref{ex.xu2}). 
To compute the highest derivative, we will use M{\oe}glin's construction of $A$-packets
together with Xu's combinational result \cite[Theorem 6.1]{X3}. 
\vskip 10pt

This paper is organized as follows. 
In \S \ref{rep.cl}, 
we review several results in representation theory for classical groups. 
In particular, we explain how the highest derivatives of an irreducible representation $\pi$ 
determine its Langlands data almost completely (Theorem \ref{vs}). 
Also we recall Jantzen's algorithm to compute the highest derivatives in \S \ref{alg}. 
In \S \ref{packets}, we review Arthur's theory including Xu's lemma (Lemma \ref{xu})
and M{\oe}glin's construction (\S \ref{s.moe}).
In addition, some results on the irreducibility of parabolically induced representations are written in \S \ref{s.irr}. 
In \S \ref{s.main}, we state the main results (Theorem \ref{main} and Corollaries \ref{ab}, \ref{k'}).
Also in \S \ref{s.ex}, we give some examples of Aubert duals, which are in the integral case. 
Finally, we prove Theorem \ref{main} in \S \ref{s.proof}. 

\subsection*{Acknowledgement}
The author is grateful to Professor Alberto M{\'i}nguez
for telling the notion of derivatives and several results. 
He wishes to thank Professor Chris Jantzen to allow him to work on this topic. 

\subsection*{Notation}
Let $F$ be a non-archimedean local field of characteristic zero. 
We denote by $W_F$ the Weil group of $F$.
The norm map $|\cdot| \colon W_F \rightarrow \R^\times$ is normalized so that $|\Frob| = q^{-1}$, 
where $\Frob \in W_F$ is a fixed (geometric) Frobenius element, 
and $q$ is the cardinality of the residual field of $F$.
\par

Each irreducible unitary supercuspidal representation $\rho$ of $\GL_d(F)$
is identified with the irreducible bounded representation of $W_F$ of dimension $d$ 
via the local Langlands correspondence for $\GL_d(F)$.
Through this paper, we fix such a $\rho$.
For each positive integer $a$, the unique irreducible algebraic representation of $\SL_2(\C)$
of dimension $a$ is denoted by $S_a$.
\par

For a $p$-adic group $G$, we denote by $\Rep(G)$ (\resp $\Irr(G)$) 
the set of equivalence classes of smooth admissible (\resp irreducible) representations of $G$.
For $\Pi \in \Rep(G)$, we write $\semi(\Pi)$ for the semisimplification of $\Pi$.

\section{Representations of classical groups}\label{rep.cl}
In this section, we recall some results on parabolically induced representations and Jacquet modules. 

\subsection{Representations of $\GL_n(F)$}\label{GL}
Let $P=MN$ be a standard parabolic subgroup of $\GL_n(F)$, 
i.e., $P$ contains the Borel subgroup consisting of upper half triangular matrices.
Then the Levi subgroup $M$ is isomorphic to $\GL_{n_1}(F) \times \dots \times \GL_{n_r}(F)$
with $n_1 + \dots + n_r = n$.
For smooth representations $\tau_1, \dots, \tau_r$ of $\GL_{n_1}(F), \dots, \GL_{n_r}(F)$, respectively, 
we denote the normalized parabolically induced representation by
\[
\tau_1 \times \dots \times \tau_r = 
\Ind_{P}^{\GL_n(F)}(\tau_1 \boxtimes \dots \boxtimes \tau_r).
\]
When $\tau_1 = \dots = \tau_r = \tau$, we write
\[
\tau^{r} = \underbrace{\tau \times \dots \times \tau}_r.
\]
\par

A \textbf{segment} is a symbol $[x,y]$, 
where $x,y \in \R$ with $x-y \in \Z$ and $x \geq y$.
We identify $[x,y]$ with the set $\{x, x-1, \dots, y\}$ so that $\#[x,y] = x-y+1$.
Let $\rho$ be an irreducible (unitary) supercuspidal representation of $\GL_d(F)$.
Then the normalized parabolically induced representation 
\[
\rho|\cdot|^{x} \times \dots \times \rho|\cdot|^y
\]
of $\GL_{d(x-y+1)}(F)$
has a unique irreducible subrepresentation, 
which is denoted by 
\[
\Delta_\rho[x,y]
\]
and 
is called a \textbf{Steinberg representation}. 
This is an essentially discrete series representation of $\GL_{d(x-y+1)}(F)$. 
\par

When $\tau_i = \Delta_{\rho_i}[x_i,y_i]$ with $x_1+y_1 \leq \dots \leq x_r+y_r$, 
the parabolically induced representation $\tau_1 \times \dots \times \tau_r$ is called
a \textbf{standard module}. 
The Langlands classification says that it has a unique irreducible subrepresentation, 
which is denoted by $L(\tau_1, \dots, \tau_r)$. 
This notation is used also for $\tau_1 \times \dots \times \tau_r$ which is isomorphic to a standard module. 
\par

When segments $[x_1,y_1], \dots, [x_t,y_t]$ satisfies that $x_i \geq y_i$, 
$x_1 < \dots < x_t$, $y_1 < \dots < y_t$ and $x_1 \equiv \dots \equiv x_t \bmod \Z$, 
we call $L(\Delta_\rho[x_1,y_1], \dots, \Delta_\rho[x_t,y_t])$ 
a \textbf{ladder representation}. 
This notion was introduced by Lapid--M{\'i}nguez \cite{LM}.
In particular, when $x_i = x_1+i-1$ and $y_i = y_1+i-1$ for $1 \leq i \leq t$, 
the ladder representation $L(\Delta_\rho[x_1,y_1], \dots, \Delta_\rho[x_t,y_t])$ 
is also called a \textbf{Speh representation}, which is a unitary representation.
\par

An irreducibility criterion for parabolically induced representations
of Steinberg representations is given by Zelevinsky. 
\begin{thm}[Zelevinsky {\cite[Theorem 9.7, Proposition 4.6]{Z}}]\label{zel}
Let $[x,y]$ and $[x',y']$ be segments, 
and let $\rho$ and $\rho'$ be irreducible unitary supercuspidal representations 
of $\GL_d(F)$ and $\GL_{d'}(F)$, respectively.
Then the parabolically induced representation 
\[
\Delta_\rho[x,y] \times \Delta_{\rho'}[x',y']
\]
is irreducible unless the following conditions hold: 
\begin{itemize}
\item
$\rho \cong \rho'$; 
\item
$[x, y] \not\subset [x', y']$ and $[x', y'] \not\subset [x, y]$ as sets; 
\item
$[x, y] \cup [x', y']$ is also a segment.
\end{itemize}
In this case, if $x+y < x'+y'$, then there exists an exact sequence
\[
\begin{CD}
0 
@>>> L(\Delta_\rho[x,y], \Delta_{\rho'}[x',y'])
@>>> \Delta_\rho[x,y] \times \Delta_{\rho'}[x',y']
@>>> \Delta_\rho[x',y] \times \Delta_\rho[x,y']
@>>> 0. 
\end{CD}
\]
Here, when $x = y'-1$, we omit $\Delta_\rho[x,y']$.
\end{thm}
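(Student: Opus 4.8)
The plan is to establish the dichotomy via the principle that $\Delta_\rho[x,y] \times \Delta_{\rho'}[x',y']$ is irreducible if and only if it is isomorphic to $\Delta_{\rho'}[x',y'] \times \Delta_\rho[x,y]$. One direction is immediate, since the two have the same Jordan--H\"older constituents; for the converse, suppose the two orderings give isomorphic representations. Arranging $x+y \le x'+y'$ by relabelling, the Langlands classification gives that $\Delta_\rho[x,y]\times\Delta_{\rho'}[x',y']$ has socle $L := L(\Delta_\rho[x,y],\Delta_{\rho'}[x',y'])$, and passing to contragredients (which sends $\Delta_\rho[x,y]$ to $\Delta_{\rho^\vee}[-y,-x]$ and, crucially, commutes with parabolic induction, hence does not reverse the order of the factors) together with the assumed isomorphism shows that the cosocle is the same irreducible $L$; since $L$ occurs with multiplicity one in this standard module, the composite $L \hookrightarrow \Delta_\rho[x,y]\times\Delta_{\rho'}[x',y'] \twoheadrightarrow L$ must be an isomorphism, forcing irreducibility. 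The only $\GL$-specific input I will use below is the elementary fact that the Jacquet module of $\Delta_\rho[x,y]$ along a maximal parabolic is the \emph{irreducible} representation $\Delta_\rho[x,x-k+1]\boxtimes\Delta_\rho[x-k,y]$ (with $\Delta_\rho$ of the empty segment taken to be the trivial representation), together with its degree-one consequence, the Leibniz rule $\semi\Jac_{\rho|\cdot|^t}(\tau_1\times\tau_2) = \Jac_{\rho|\cdot|^t}(\tau_1)\times\tau_2 \oplus \tau_1\times\Jac_{\rho|\cdot|^t}(\tau_2)$.

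It remains to decide when the two orderings are isomorphic. If the three bulleted conditions fail, I claim $\Delta_\rho[x,y]\times\Delta_{\rho'}[x',y']\cong\Delta_{\rho'}[x',y']\times\Delta_\rho[x,y]$. This I would prove by a rank-one reduction: decompose each standard intertwining operator between the two sides as a composite of rank-one operators obtained by sliding one segment past the other one cuspidal block at a time, and show that the round trip is a nonzero scalar. When $\rho\not\cong\rho'$ or $x-x'\notin\Z$, every rank-one move in sight is an isomorphism (no reducibility occurs off a single line), so this is clear. When $\rho\cong\rho'$, $x\equiv x'\bmod\Z$ but $[x,y]\cup[x',y']$ is not a segment, the two segments are separated by a gap and again no reducible rank-one move occurs. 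When instead one of the segments contains the other, reducible rank-one moves do occur, but they appear in canceling pairs on the way out and on the way back, so the round trip is still a nonzero scalar. In all cases the isomorphism holds, hence $\Delta_\rho[x,y]\times\Delta_{\rho'}[x',y']$ is irreducible.

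Finally, suppose $\rho\cong\rho'$ and the segments are linked with $x+y<x'+y'$; then necessarily $x<x'$, $y<y'$ and $y'\le x+1$. I would construct a nonzero intertwining operator
\[
M \colon \Delta_\rho[x,y] \times \Delta_\rho[x',y'] \longrightarrow \Delta_\rho[x',y] \times \Delta_\rho[x,y']
\]
(when $x=y'-1$ the second target factor is the empty segment, and the target reads $\Delta_\rho[x',y]$) as a composite of standard rank-one operators sliding the block $\rho|\cdot|^x \times \dots \times \rho|\cdot|^{y'}$ past the top of the second segment. Since $[x,y']\subseteq[x',y]$, the target is an unlinked pair, hence irreducible by the previous paragraph, so $M$ is surjective; its kernel is therefore nonzero and contains the socle $L(\Delta_\rho[x,y],\Delta_\rho[x',y'])$. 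A geometric-lemma computation of a suitable (e.g.\ minimal) Jacquet module then shows that $\Delta_\rho[x,y]\times\Delta_\rho[x',y']$ has length exactly two, whence $\ker M = L(\Delta_\rho[x,y],\Delta_\rho[x',y'])$ and the asserted short exact sequence follows; this also shows the representation is reducible in the linked case, completing the dichotomy. The part I expect to be most delicate is the rank-one bookkeeping in the second paragraph — checking that the reducible moves really do cancel — together with the verification, via the geometric lemma, that the length is exactly two in the linked case; both require carefully tracking multiplicities.
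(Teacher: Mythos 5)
This theorem is a quoted result: the paper cites Zelevinsky \cite{Z} (Theorem~9.7 for the irreducibility dichotomy, Proposition~4.6 for the short exact sequence) and gives no proof, so there is no in-paper argument to compare yours to. Zelevinsky's own proof is combinatorial, built on the Bernstein--Zelevinsky derivative machinery, and makes no use of intertwining operators.

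Your first paragraph --- the commutativity criterion plus socle $=$ cosocle $= L$ with multiplicity one --- is correct (the contragredient detour is harmless but unnecessary: the Langlands quotient of $\Delta_{\rho'}[x',y']\times\Delta_\rho[x,y]$ is already $L$). The genuine gap is where you flag it, in the nested case $\rho\cong\rho'$, $[x',y']\subsetneq[x,y]$. At the fixed point of interest the standard intertwining operator is \emph{not} a literal composite of invertible rank-one maps; several rank-one factors in the product decomposition have poles or zeros precisely there, so the naive composite is $0$ or $\infty$, not a nonzero scalar. To make ``the reducible moves cancel in pairs'' precise one must deform to a one-parameter family $M(s)$, normalize each rank-one factor by the appropriate local $L$-factor (the $c$-function calculus), and prove that the normalized composite is holomorphic and nonvanishing at $s=0$. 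That normalization-and-nonvanishing step is essentially the entire content of irreducibility in the nested case, not bookkeeping, and the sketch does not supply it. Similarly, the length-two count in your third paragraph is sound in outline but is itself the substance of \cite[Proposition~4.6]{Z}. So as written the plan reorganizes the theorem around intertwining operators rather than re-proving it; to close it you would need either the $L$-factor normalization carried out in full, or a derivative-based argument along Zelevinsky's original lines.
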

For an irreducibility criterion for parabolically induced representations of Speh (\resp ladder) representations, 
see \cite{MW} (\resp \cite{LM1}).
\par

For a partition $(n_1,\dots, n_r)$ of $n$, 
we denote by $\Jac_{(n_1, \dots, n_r)}$ the normalized Jacquet functor on $\Rep(\GL_n(F))$
with respect to the standard parabolic subgroup $P = MN$ 
with $M \cong \GL_{n_1}(F) \times \dots \times \GL_{n_r}(F)$.
The Jacquet modules of ladder representations with respect to maximal parabolic subgroups 
are computed by Kret--Lapid \cite[Theorem 2.1]{KL} in general.
The following is a special case of this computation. 
\begin{prop}\label{JacGL}
Let $L(\Delta_\rho[x_1,y_1], \dots, \Delta_\rho[x_t,y_t])$ be a ladder representation of $\GL_{n}(F)$
so that $x_i \geq y_i$, $x_1 < \dots < x_t$, $y_1 < \dots < y_t$ and $x_1 \equiv \dots \equiv x_t \bmod \Z$. 
Then unless $n_1 \equiv 0 \bmod d$, we have
$\Jac_{(n_1,n-n_1)}(L(\Delta_\rho[x_1,y_1], \dots, \Delta_\rho[x_t,y_t])) = 0$.
Moreover: 
\begin{enumerate}
\item
The semisimplification of 
$\Jac_{(d,n-d)}(L(\Delta_\rho[x_1,y_1], \dots, \Delta_\rho[x_t,y_t]))$ is equal to
\[
\sum_{\substack{1 \leq i \leq t \\ x_{i-1} < x_i-1}}
\rho|\cdot|^{x_i} \boxtimes 
L(\Delta_\rho[x_1,y_1], \dots, \Delta_\rho[x_i-1,y_i], \dots, \Delta_\rho[x_t,y_t]).
\] 
Here, we omit the condition $x_{i-1} < x_i-1$ when $i=1$, 
and we remove $\Delta_\rho[x_i-1, y_i]$ when $x_i=y_i$. 
\item
The semisimplification of 
$\Jac_{(n-d,d)}(L(\Delta_\rho[x_1, y_1], \dots, \Delta_\rho[x_t, y_t]))$ is equal to
\[
\sum_{\substack{1 \leq i \leq t \\ y_i+1 < y_{i+1}}}
L(\Delta_\rho[x_1,y_1], \dots, \Delta_\rho[x_i,y_i+1], \dots, \Delta_\rho[x_t,y_t])
\boxtimes \rho|\cdot|^{y_i}.
\] 
Here, we omit the condition $y_i+1 < y_{i+1}$ when $i=t$, 
and we remove $\Delta_\rho[x_i, y_i+1]$ when $x_i=y_i$. 
\end{enumerate}
\end{prop}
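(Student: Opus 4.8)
The plan is twofold. The quickest route is to specialise the Kret--Lapid formula \cite[Theorem 2.1]{KL}, which computes \emph{all} maximal-parabolic Jacquet modules of ladder representations, to the cases $n_1=d$ and $n_1=n-d$; since it is instructive I also sketch a self-contained argument, which shares the same skeleton. In any case, the divisibility statement is a matter of cuspidal support: every irreducible subquotient of $L(\Delta_\rho[x_1,y_1],\dots,\Delta_\rho[x_t,y_t])$ has cuspidal support contained in the set of twists $\rho|\cdot|^z$ of $\rho$ ($z\in\R$), so in any nonzero summand $\tau\boxtimes\pi'$ of $\semi\Jac_{(n_1,n-n_1)}(\cdot)$ the $\GL_{n_1}(F)$-representation $\tau$ is supported on such twists, whence $d\mid n_1$; for $n_1=d$ this forces $\tau\cong\rho|\cdot|^{z}$ for a single $z$.

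Next I would reduce (2) to (1). The contragredient of the ladder representation in the statement is again a ladder representation, namely $L(\Delta_{\rho^\vee}[-y_t,-x_t],\dots,\Delta_{\rho^\vee}[-y_1,-x_1])$; passing to contragredients interchanges $\Jac_{(d,n-d)}$ with $\Jac_{(n-d,d)}$, up to dualising both tensor factors and reversing their order; and under the resulting dictionary the condition ``$x_{i-1}<x_i-1$'' for the original ladder matches the condition ``$y_j+1<y_{j+1}$'' for the dual one, the modified segment $\Delta_\rho[x_i-1,y_i]$ corresponding to $\Delta_\rho[x_j,y_j+1]$, and the degenerate conventions when $x_i=y_i$ (resp. $x_j=y_j$) matching as well. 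So it suffices to prove (1).

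For (1), set $\sigma=L(\Delta_\rho[x_1,y_1],\dots,\Delta_\rho[x_t,y_t])$ and let $\pi=\Delta_\rho[x_1,y_1]\times\dots\times\Delta_\rho[x_t,y_t]$; since $x_i+y_i$ is strictly increasing, this is a standard module with $\sigma$ as its unique irreducible subrepresentation. Applying the exact Jacquet functor to $\sigma\hookrightarrow\pi$ and using the geometric lemma, the divisibility remark, and the fact that the top $\GL_d(F)$-Jacquet module of $\Delta_\rho[x_i,y_i]$ equals $\rho|\cdot|^{x_i}\boxtimes\Delta_\rho[x_i-1,y_i]$ (read as $\rho|\cdot|^{x_i}\boxtimes\1$ when $x_i=y_i$), one finds
\[
\semi\Jac_{(d,n-d)}(\pi)=\bigoplus_{i=1}^{t}\rho|\cdot|^{x_i}\boxtimes M_i,
\]
where $M_i$ is the standard module obtained from $\pi$ by replacing $\Delta_\rho[x_i,y_i]$ with $\Delta_\rho[x_i-1,y_i]$; it has a unique irreducible subrepresentation $L_i:=L(\Delta_\rho[x_1,y_1],\dots,\Delta_\rho[x_i-1,y_i],\dots,\Delta_\rho[x_t,y_t])$. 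As the $x_i$ are pairwise distinct, this yields $\semi\Jac_{(d,n-d)}(\sigma)=\bigoplus_{i\in I}\rho|\cdot|^{x_i}\boxtimes\Jac_{\rho|\cdot|^{x_i}}(\sigma)$ for a subset $I\subseteq\{1,\dots,t\}$, with each $\Jac_{\rho|\cdot|^{x_i}}(\sigma)$ a subrepresentation of $M_i$, so that whenever it is nonzero it contains $L_i$.

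It remains to show that $I=\{\,i : x_{i-1}<x_i-1\,\}$ (with $i=1$ always included) and that $\Jac_{\rho|\cdot|^{x_i}}(\sigma)$ is \emph{exactly} $L_i$ for $i\in I$; this is the main point, since for the full induced module $\pi$ every index contributes, and it is the passage to the irreducible submodule that kills the terms with $x_{i-1}=x_i-1$. I would argue by induction on $t$: the base case $t=1$ is the Steinberg computation above, and for the inductive step one applies Zelevinsky's exact sequence (Theorem \ref{zel}) to the two left-most factors $\Delta_\rho[x_1,y_1]\times\Delta_\rho[x_2,y_2]$ --- reducible precisely when $x_1=x_2-1$ --- to realise $\sigma$ inside the product of a shorter ladder representation with a single Steinberg representation, and then tracks the partial Jacquet modules $\Jac_{\rho|\cdot|^{x_i}}$ across this embedding, invoking the inductive hypothesis (in particular the multiplicity-freeness it provides) to rule out the extra constituents of the $M_i$. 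This combinatorial bookkeeping --- already visible in the five-case format of \cite[Theorem 3.3]{J3} --- is the real work and the main obstacle; for the purposes of this paper it is cleanest simply to quote \cite[Theorem 2.1]{KL} and read off the two special cases, which is the route I would take.
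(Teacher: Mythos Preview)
Your proposal is correct and matches the paper's treatment: the paper does not give an independent proof of this proposition but simply records it as a special case of \cite[Theorem 2.1]{KL}, which is exactly the route you recommend. Your additional self-contained sketch is a reasonable outline (and your block-decomposition remark that $\Jac_{\rho|\cdot|^{x_i}}(\sigma)$, if nonzero, must contain the socle $L_i$ of the standard module $M_i$ is sound), but as you yourself conclude, quoting Kret--Lapid is the cleanest option and is precisely what the paper does.
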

\par

Let $\RR_n$ be the Grothendieck group of the category of smooth representations of $\GL_n(F)$ of finite length.
By the semisimplification, we identify the objects in this category with elements in $\RR_n$.
Elements in $\Irr(\GL_n(F))$ form a $\Z$-basis of $\RR_n$.
Set $\RR = \oplus_{n \geq 0} \RR_n$.
The parabolic induction functor gives a product 
\[
m \colon \RR \otimes \RR \rightarrow \RR, \ 
\tau_1 \otimes \tau_2 \mapsto \semi(\tau_1 \times \tau_2). 
\]
This product makes $\RR$ an associative commutative ring.
On the other hand, the Jacquet functor gives a coproduct
\begin{align*}
m^* \colon &\RR \rightarrow \RR \otimes \RR
\end{align*}
which is defined by the $\Z$-linear extension of 
\[
\Irr(\GL_n(F)) \ni \tau \mapsto \sum_{k = 0}^n \semi \Jac_{(k, n-k)}(\tau).
\]
Then $m$ and $m^*$ make $\RR$ a graded Hopf algebra, 
i.e., $m^* \colon \RR \rightarrow \RR \otimes \RR$ is a ring homomorphism.

\begin{defi}\label{derGL}
Let $\pi$ be an irreducible representation of $\GL_n(F)$.
\begin{enumerate}
\item
Suppose that
\[
\semi\Jac_{(d,n-d)}(\pi) = \bigoplus_{i \in I} \tau_i \boxtimes \pi_i, 
\quad
\semi\Jac_{(n-d,d)}(\pi) = \bigoplus_{j \in J} \pi_j' \boxtimes \tau'_j
\]
with $\tau_i, \tau_j'$ and $\pi_i,\pi_j'$ being irreducible representations of $\GL_d(F)$ and $\GL_{n-d}(F)$, respectively.
Then for $x \in \R$, we define the \textbf{left derivative} $L_{\rho|\cdot|^x}(\pi)$
and the \textbf{right derivative} $R_{\rho|\cdot|^x}(\pi)$ with respect to $\rho|\cdot|^x$ by 
\[
L_{\rho|\cdot|^x}(\pi) = \bigoplus_{\substack{i \in I \\ \tau_i \cong \rho|\cdot|^x}}\pi_i, 
\quad
R_{\rho|\cdot|^x}(\pi) = \bigoplus_{\substack{j \in J \\ \tau'_j \cong \rho|\cdot|^x}}\pi_j'.
\]

\item
For a positive integer $k$, 
we define the \textbf{$k$-th left and right derivatives} $L^{(k)}_{\rho|\cdot|^x}(\pi)$ and $R^{(k)}_{\rho|\cdot|^x}(\pi)$
with respect to $\rho|\cdot|^x$ by 
\[
L^{(k)}_{\rho|\cdot|^x}(\pi) = \frac{1}{k!}\underbrace{L_{\rho|\cdot|^x} \circ \dots \circ L_{\rho|\cdot|^x}}_k (\pi), 
\quad
R^{(k)}_{\rho|\cdot|^x}(\pi) = \frac{1}{k!}\underbrace{R_{\rho|\cdot|^x} \circ \dots \circ R_{\rho|\cdot|^x}}_k (\pi).
\]
We also set $L^{(0)}_{\rho|\cdot|^x}(\pi) = R^{(0)}_{\rho|\cdot|^x}(\pi) = \pi$. 

\item
When $L^{(k)}_{\rho|\cdot|^x}(\pi) \not=0$ but $L^{(k+1)}_{\rho|\cdot|^x}(\pi) = 0$, 
we call $L^{(k)}_{\rho|\cdot|^x}(\pi)$ the \textbf{highest left derivative}. 
We also define the \textbf{highest right derivative} similarly. 
\end{enumerate}
\end{defi}

By \cite[Lemma 2.1.2]{J0}, for any irreducible representation $\pi$, 
its highest derivatives $L^{(k)}_{\rho|\cdot|^x}(\pi)$ and $R^{(k')}_{\rho|\cdot|^x}(\pi)$ are irreducible. 
The following result was obtained by 
Jantzen \cite{J0} and M{\'i}nguez \cite[Th{\'e}or{\`e}me 7.5]{M1} independently. 
We adopt the statements of \cite[Propositions 2.1.4, 2.4.3, Theorems 2.2.1, 2.4.5]{J0}. 
For another reformulation, see \cite[Theorem 5.11]{LM1}. 
\par

\begin{thm}[Jantzen \cite{J0}, M{\'i}nguez \cite{M1}]\label{LR}
Let $\pi = L(\Delta_{\rho}[x_1,y_1], \dots, \Delta_{\rho}[x_r,y_r])$ be an irreducible representation. 
\begin{enumerate}
\item
We may assume that $y_1 \leq \dots \leq y_r$, and 
that if $y_j = y_{j+1}$, then $x_j \geq x_{j+1}$. 
For $1 \leq j \leq r$, define $n_x(j) = \#\{i \leq j \;|\; x_i = x\}$, and set $n_x(0) = 0$. 
Then with $k = \max_{j \geq 0}\{ n_x(j) - n_{x-1}(j)\}$, 
the left derivative $L_{\rho|\cdot|^x}^{(k)}(\pi)$ is highest. 
For $1 \leq m \leq k$, if we set $j_m = \min\{j \;|\; n_{x}(j)-n_{x-1}(j) = m\}$, 
then $L_{\rho|\cdot|}^{(k)}(\pi)$ is given from $\pi = L(\Delta_{\rho}[x_1,y_1], \dots, \Delta_{\rho}[x_r,y_r])$
by replacing $x_j = x$ with $x-1$ for all $j \in \{j_1, \dots, j_k\}$. 

\item
We may assume that $x_1 \leq \dots \leq x_r$, and 
that if $x_j = x_{j+1}$, then $y_j \geq y_{j+1}$. 
For $1 \leq j \leq r$, define $n'_y(j) = \#\{i \geq r-j+1 \;|\; y_i = y\}$, and set $n'_y(0) = 0$. 
Then with $k = \max_{j \geq 0}\{ n'_y(j) - n'_{y+1}(j)\}$, 
the right derivative $R_{\rho|\cdot|^y}^{(k)}(\pi)$ is highest. 
For $1 \leq m \leq k$, if we set $j_m = \min\{j \;|\; n'_{y}(j)-n'_{y+1}(j) = m\}$, 
then $R_{\rho|\cdot|^y}^{(k)}(\pi)$ is given from $\pi = L(\Delta_{\rho}[x_1,y_1], \dots, \Delta_{\rho}[x_r,y_r])$
by replacing $y_j = y$ with $y+1$ for all $j \in \{j_1, \dots, j_k\}$. 
\end{enumerate}
\end{thm}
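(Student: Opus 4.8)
The plan is to reduce the statement to combinatorics of multisegments on a single cuspidal line and then to an analysis of Jacquet modules of standard modules. Two reductions come first. The contragredient functor on $\GL$ sends $\Delta_{\rho}[x,y]$ to $\Delta_{\rho^\vee}[-y,-x]$ and interchanges the two maximal parabolics, so $R^{(k)}_{\rho|\cdot|^y}(\pi) = \bigl(L^{(k)}_{\rho^\vee|\cdot|^{-y}}(\pi^\vee)\bigr)^\vee$ and $\pi^\vee = L(\Delta_{\rho^\vee}[-y_1,-x_1],\dots,\Delta_{\rho^\vee}[-y_r,-x_r])$; under $[x,y]\mapsto[-y,-x]$ the ordering conventions, the two statistics, and the two index sets correspond (the order reversal being already built into the ``counting from the right'' in part (2)), so part (2) for $\pi$ is part (1) for $\pi^\vee$ and it suffices to prove part (1). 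Next, Theorem \ref{zel} shows that $\Delta_\rho[x_i,y_i]\times\Delta_\rho[x_j,y_j]$ is irreducible when $x_i - x_j \notin \Z$; hence $\pi$ factors as an irreducible product over the $\Z$-cosets of the exponents, and since $\Jac_{(d,\cdot)}$ of such a product produces $\rho|\cdot|^x$ only from the factor whose exponents lie in $x+\Z$, I may discard the other factors and, after an unramified twist, assume $\pi = L(\mathfrak{m})$ for a finite multiset $\mathfrak{m}$ of segments in $\Z$.

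The goal is then $D^{(k)}_{\rho|\cdot|^x}(L(\mathfrak{m})) = L(\tilde{\mathfrak{m}})$ with $k$ and $\tilde{\mathfrak{m}}$ as in the statement. I would first get an upper bound using the standard module $\sigma_{\mathfrak{m}} = \Delta_\rho[x_1,y_1]\times\dots\times\Delta_\rho[x_r,y_r]$, of which $L(\mathfrak{m})$ is the socle. Since $m^*$ is a ring homomorphism and $m^*(\Delta_\rho[a,b]) = \sum_{z=b-1}^{a}\Delta_\rho[a,z+1]\otimes\Delta_\rho[z,b]$, iterating $\Jac_{(d,\cdot)}$ shows that the part of $\Jac_{(d,\dots,d,\cdot)}(\sigma_{\mathfrak{m}})$ carrying $\rho|\cdot|^x$ in each of the first $k$ slots equals, in the Grothendieck group, $k!\sum_S [\sigma_{\mathfrak{m}_S}]$, where $S$ ranges over $k$-subsets of $\{i : x_i = x\}$ and $\mathfrak{m}_S$ is obtained by replacing $x_i$ with $x-1$ for $i \in S$ (deleting the segment when $x_i = y_i$). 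As Jacquet functors and partial Jacquet are additive and $[L(\mathfrak{m})] \le [\sigma_{\mathfrak{m}}]$, this bounds $D^{(k)}_{\rho|\cdot|^x}(L(\mathfrak{m}))$ above by $\sum_S [\sigma_{\mathfrak{m}_S}]$; in particular it vanishes once $k > \#\{i : x_i = x\}$, and every irreducible constituent of $D^{(k)}_{\rho|\cdot|^x}(L(\mathfrak{m}))$ is of the form $L(\mathfrak{n})$ for some $\mathfrak{n}$ occurring in one of the $\sigma_{\mathfrak{m}_S}$.

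For the matching lower bound and the identification of the surviving term I would induct on the size of $\mathfrak{m}$ (equivalently on $\#\{i : x_i = x\}$), removing one copy of $\rho|\cdot|^x$ at a time. Ordering the segments so that $y_1 \le \dots \le y_r$ with ties broken by $x_j \ge x_{j+1}$, put $j_1 = \min\{j : n_x(j) - n_{x-1}(j) = 1\}$ and let $\mathfrak{m}'$ be $\mathfrak{m}$ with $x_{j_1}$ lowered to $x-1$ (or deleted if it was a point segment). The inductive claim is that $L(\mathfrak{m}) \hookrightarrow \rho|\cdot|^x \times L(\mathfrak{m}')$, so that $L(\mathfrak{m}')$ occurs in $D^{(1)}_{\rho|\cdot|^x}(L(\mathfrak{m}))$, and that the $x$-excess of $\mathfrak{m}'$ is exactly one less with greedy index set $\{j_2,\dots,j_k\}$; applying the hypothesis to $L(\mathfrak{m}')$ then yields $\tilde{\mathfrak{m}}$ and the full index set $\{j_1,\dots,j_k\}$. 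Equivalently one may use the socle characterization of the highest derivative — $D^{(k)}_{\rho|\cdot|^x}(\pi)$ is the unique irreducible $\sigma$ with $L_{\rho|\cdot|^x}(\sigma) = 0$ and $\pi \hookrightarrow (\rho|\cdot|^x)^{\boxtimes k}\times\sigma$ for $k$ maximal — together with a computation of $\mathrm{soc}\bigl((\rho|\cdot|^x)^{\boxtimes k}\times L(\mathfrak{n})\bigr)$ for $\mathfrak{n}$ with $L_{\rho|\cdot|^x}(L(\mathfrak{n})) = 0$, which is given by an RSK-type insertion of the $k$ copies of $x$ into the segments of $\mathfrak{n}$ ending below $x$. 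Once $L(\tilde{\mathfrak{m}})$ is known to occur in $D^{(k)}_{\rho|\cdot|^x}(L(\mathfrak{m}))$ for the stated $k$, the irreducibility of the highest derivative (\cite[Lemma 2.1.2]{J0}) together with the vanishing $L_{\rho|\cdot|^x}(L(\tilde{\mathfrak{m}})) = 0$ — itself an instance of the formula, since after the shifts every $x$-start of $\tilde{\mathfrak{m}}$ is matched by an $(x-1)$-start to its left — promotes this to the equality $D^{(k)}_{\rho|\cdot|^x}(L(\mathfrak{m})) = L(\tilde{\mathfrak{m}})$ and forces $D^{(k+1)}_{\rho|\cdot|^x}(L(\mathfrak{m})) = 0$.

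The hard part is exactly this combinatorial core. Among the many subsets $S$ appearing in the Jacquet module of $\sigma_{\mathfrak{m}}$, one must show that only the greedy choice $S = \{j_1,\dots,j_k\}$ contributes to the Jacquet module of the \emph{socle} $L(\mathfrak{m})$ — the other $\sigma_{\mathfrak{m}_S}$ entering only through the remaining constituents of $\sigma_{\mathfrak{m}}$ — that it does so with multiplicity one, and that after the shifts no further copy of $\rho|\cdot|^x$ can be removed. This is what pins down the precise ordering convention and the ``least $j$ realizing each excess value'' recipe, and handling it demands a careful induction tracking, at each step, which constituents of the standard modules $\sigma_{\mathfrak{m}_S}$ actually occur in $\Jac$-modules of $L(\mathfrak{m})$ rather than of its companions; the tools are exactness of Jacquet functors, Frobenius reciprocity, and the irreducibility/linkedness structure of products $(\rho|\cdot|^x)^{\boxtimes j}\times L(-)$ on general linear groups provided by Theorem \ref{zel} and \cite[Lemma 2.1.2]{J0}.
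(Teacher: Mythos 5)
The paper does not give a proof of Theorem~\ref{LR}: it is quoted verbatim from Jantzen \cite[Propositions 2.1.4, 2.4.3, Theorems 2.2.1, 2.4.5]{J0} and M\'inguez \cite[Th\'eor\`eme 7.5]{M1} (with \cite[Theorem 5.11]{LM1} cited for a reformulation). So there is no ``paper's own proof'' to compare your attempt against; what you have written is an outline of how the underlying result in the literature might be established.

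As an outline, the framing is sound and the first two reductions are correct: passing to the contragredient to deduce (2) from (1), and restricting to a single cuspidal line via Theorem~\ref{zel}, are both standard and accurate, and the Hopf-algebra expansion of $m^*$ on a standard module correctly produces the set of possible $\sigma_{\mathfrak m_S}$ and the crude upper bound. But the crux of the theorem is exactly what you yourself flag as ``the hard part,'' and at that point the argument is a description of a program rather than a proof. Concretely: (i) the crude bound $k\le \#\{i:x_i=x\}$ from the Hopf algebra is strictly weaker than the claimed $k=\max_j(n_x(j)-n_{x-1}(j))$, and narrowing it requires showing the non-greedy $\sigma_{\mathfrak m_S}$ do not contribute to the socle — this is asserted, not proved; (ii) the key embedding $L(\mathfrak m)\hookrightarrow\rho|\cdot|^x\times L(\mathfrak m')$ (equivalently the RSK-type socle computation you mention) is the actual content of M\'inguez/Jantzen and is not justified here, including the verification that the greedy index $j_1$ is the right one under the chosen tie-breaking order; and (iii) the termination claim $L_{\rho|\cdot|^x}(L(\tilde{\mathfrak m}))=0$ is deferred to ``another instance of the formula,'' which is only non-circular if the induction scheme is pinned down — you gesture at it but do not set up a decreasing invariant. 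So the proposal is a plausible roadmap following the spirit of \cite{J0} and \cite{LM1}, but it does not constitute a proof of the combinatorial identification at its core.
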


\subsection{Representations of $\SO_{2n+1}(F)$ and $\Sp_{2n}(F)$}\label{so-sp}
We set $G_n$ to be split $\SO_{2n+1}(F)$ or $\Sp_{2n}(F)$, 
i.e., $G_n$ is the group of $F$-points of the split algebraic group of type $B_n$ or $C_n$.
Fix a Borel subgroup of $G_n$, 
and let $P=MN$ be a standard parabolic subgroup of $G_n$.
Then the Levi part $M$ is of the form 
$\GL_{k_1}(F) \times \dots \times \GL_{k_r}(F) \times G_{n_0}$
such that $k_1+\dots+k_r+n_0=n$.
For a smooth representation $\tau_1 \boxtimes \dots \boxtimes \tau_r \boxtimes \pi_0$ of $M$, 
we denote the normalized parabolically induced representation by 
\[
\tau_1 \times \dots \times \tau_r \rtimes \pi_0 
= \Ind_P^{G_n}( \tau_1 \boxtimes \dots \boxtimes \tau_r \boxtimes \pi_0 ).
\]
The functor $\Ind_{P}^{G_n} \colon \Rep(M) \rightarrow \Rep(G_n)$ is exact. 
\par

On the other hand, for a smooth representation $\pi$ of $G_n$, 
we denote the normalized Jacquet module with respect to $P$ by 
\[
\Jac_P(\pi), 
\]
and its semisimplification by $\semi\Jac_P(\pi)$.
The functor $\Jac_{P} \colon \Rep(G_n) \rightarrow \Rep(M)$ is exact. 
The Frobenius reciprocity asserts that 
\[
\mathrm{Hom}_{G_n}(\pi, \Ind_P^{G_n}(\sigma)) \cong \mathrm{Hom}_{M}(\Jac_P(\pi), \sigma)
\]
for $\pi \in \Rep(G_n)$ and $\sigma \in \Rep(M)$.
\par

The maximal standard parabolic subgroup with Levi $\GL_{k}(F) \times G_{n-k}$ 
is denoted by $P_k = M_k N_k$ for $0 \leq k \leq n$.
Let $\RR(G_n)$ be the Grothendieck group of the category of 
smooth representations of $G_n$ of finite length. 
Set $\RR(G) = \oplus_{n \geq 0} \RR(G_n)$.
The parabolic induction defines a module structure
\[
\rtimes \colon \RR \otimes \RR(G) \rightarrow \RR(G), \ \tau \otimes \pi \mapsto \semi(\tau \rtimes \pi), 
\]
and the Jacquet functor defines a comodule structure
\[
\mu^* \colon \RR(G) \rightarrow \RR \otimes \RR(G)
\]
by
\[
\Irr(G_n) \ni \pi \mapsto \sum_{k=0}^{n} \semi\Jac_{P_k}(\pi).
\]
\par

Tadi{\'c} established a formula to compute $\mu^*$ for parabolically induced representations.
The contragredient functor $\tau \mapsto \tau^\vee$ defines an automorphism 
$\vee \colon \RR \rightarrow \RR$ in a natural way.
Let $s \colon \RR \otimes \RR \rightarrow \RR \otimes \RR$ be the homomorphism defined by
$\sum_i\tau_i \otimes \tau'_i \mapsto \sum_i \tau'_i \otimes \tau_i$.
\par

\begin{thm}[Tadi{\'c} \cite{T}]\label{tadic}
Consider the composition
\[
M^* = (m \otimes \id) \circ (\vee \otimes m^*) \circ s \circ m^* \colon \RR \rightarrow \RR \otimes \RR.
\]
Then for the maximal parabolic subgroup $P_k=M_kN_k$ of $G_n$
and for an admissible representation $\tau \boxtimes \pi$ of $M_k$, 
we have
\[
\mu^*(\tau \rtimes \pi) = M^*(\tau) \rtimes \mu^*(\pi).
\]
\end{thm}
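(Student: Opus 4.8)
The plan is to compute, for each $0\le j\le n$, the semisimplified Jacquet module $\semi\Jac_{P_j}(\tau\rtimes\pi)$ via the geometric lemma of Bernstein--Zelevinsky (equivalently, Casselman's description of Jacquet modules of parabolically induced representations), and then to reassemble the sum over all $j$ into $M^*(\tau)\rtimes\mu^*(\pi)$. As a preliminary remark, since $\Ind_{P_k}^{G_n}$ and all the Jacquet functors are exact, both sides of the asserted identity are additive in $\tau$ and in $\pi$; so one could reduce to $\tau,\pi$ irreducible, although the computation below is uniform and does not need this.

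Fix $j$. The geometric lemma presents $\semi\Jac_{P_j}\bigl(\Ind_{P_k}^{G_n}(\tau\boxtimes\pi)\bigr)$ as a direct sum, indexed by a chosen set of representatives $w$ of the double cosets $W_{M_j}\backslash W/W_{M_k}$ in the Weyl group $W$ of $G_n$ (a hyperoctahedral group, with $W_{M_k}\cong S_k\times W_{n-k}$, and similarly for $M_j$), of the normalized induction to $M_j$ of $w\cdot\bigl(\Jac^{M_k}_{M_k\cap w^{-1}M_jw}(\tau\boxtimes\pi)\bigr)$, carrying the requisite half-integral powers of the modulus character. The first step is to list these double cosets. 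For a pair of maximal parabolics of a split classical group this is classical: a representative $w$ is encoded by a decomposition $k=a+b+c$ of the $\GL_k$-block on the $P_k$-side into an initial part of size $a$, a middle part of size $b$ and a terminal part of size $c$, together with the size $m$ of an initial $\GL$-block split off from the classical factor $G_{n-k}$; here the terminal part of size $c$ is the block moved past the classical group by $w$, and one checks $j=a+c+m$ and $n-j=(n-k-m)+b$.

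For the double coset attached to such data, the inner Jacquet module of $\tau\boxtimes\pi$ is computed as follows: restricting $\tau$ to the corresponding parabolic of $\GL_k$ gives $\Jac_{(a,b,c)}(\tau)$, an iterate of $m^*$, written $\tau_1\boxtimes\tau_2\boxtimes\tau_3$; restricting $\pi$ gives a summand $\pi'\boxtimes\pi''$ of $\mu^*(\pi)$, with the $\GL$-part $\pi'$ of size $m$ and $\pi''\in\RR(G)$. The twist by $w$ then sends $\tau_3$ to its contragredient $\tau_3^\vee$ and carries it, together with $\pi'$, into the new $\GL_j$-block; it leaves $\tau_2$ as a $\GL$-block inside the classical factor; and it absorbs $\tau_1$ into $\pi''$ to build the new classical part. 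The occurrence of $\tau_3^\vee$ instead of $\tau_3$ is exactly the effect of the sign changes in $W$ that carry a $\GL$-block past the classical group. Inducing back and tracking the modulus normalizations, the contribution of this double coset is $\bigl(\tau_3^\vee\times\tau_1\times\pi'\bigr)\boxtimes\bigl(\tau_2\rtimes\pi''\bigr)$. Summing over all double cosets for fixed $j$, and then over $j$, the total equals $\sum\bigl(\tau_3^\vee\times\tau_1\times\pi'\bigr)\boxtimes\bigl(\tau_2\rtimes\pi''\bigr)$, which is precisely what one gets by unwinding $M^*=(m\otimes\id)\circ(\vee\otimes m^*)\circ s\circ m^*$ and then applying $\rtimes$ to $\mu^*(\pi)$ term by term.

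The main obstacle is the combinatorial bookkeeping: correctly enumerating the double cosets $W_{M_j}\backslash W/W_{M_k}$ in the hyperoctahedral group, identifying which blocks undergo a sign change (hence appear dualized), and checking that the half-powers of the modulus character coming out of the geometric lemma recombine exactly into the \emph{normalized} inductions appearing in $M^*(\tau)\rtimes\mu^*(\pi)$. Once this is carried out uniformly in $j$, the identity follows.
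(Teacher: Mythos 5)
The paper states this theorem as a citation to Tadi\'c~\cite{T} and supplies no proof, so there is no internal argument to compare against; your sketch is a correct outline of the standard proof, essentially the argument in Tadi\'c's original paper. You apply the Bernstein--Zelevinsky geometric lemma to $\Jac_{P_j}\circ\Ind_{P_k}^{G_n}$, parametrize the relevant double cosets of the hyperoctahedral Weyl group by a three-part decomposition $k=a+b+c$ of the $\GL_k$-block together with a block of size $m$ split off from $G_{n-k}$, observe that the block of size $c$ carried past the classical factor undergoes a sign change and therefore appears dualized, and match this against the unwinding of $M^*$, namely $M^*(\tau)=\sum\bigl(\tau_3^\vee\times\tau_1\bigr)\otimes\tau_2$ over iterated comultiplications (using coassociativity of $m^*$). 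What remains is the verification, which you explicitly flag, that the modulus characters produced by the geometric lemma recombine into \emph{normalized} induction; that check is genuinely needed but routine.

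One internal slip to correct in your prose: you write that $w$ leaves $\tau_2$ as a $\GL$-block inside the classical factor and then absorbs $\tau_1$ into $\pi''$ to build the new classical part. The second clause contradicts both your block count $j=a+c+m$ and your displayed contribution $\bigl(\tau_3^\vee\times\tau_1\times\pi'\bigr)\boxtimes\bigl(\tau_2\rtimes\pi''\bigr)$. It is $\tau_1$ (of size $a$) that joins $\tau_3^\vee$ and $\pi'$ in the new $\GL_j$-block, while $\tau_2$ (of size $b$) alone is absorbed into the classical side as $\tau_2\rtimes\pi''$. The displayed formula and the count are the correct ones; only that sentence needs the obvious swap.
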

\par

For any irreducible representation $\pi$ of $G_n$, 
there exists an irreducible representation $\tau_1 \boxtimes \dots \boxtimes \tau_r \boxtimes \sigma$ 
of a Levi subgroup $M = \GL_{k_1}(F) \times \dots \times \GL_{k_r}(F) \times G_{n_0}$ 
of some standard parabolic subgroup $P$
satisfying that
\begin{itemize}
\item
$\tau_i = \Delta_{\rho_i}[x_i, y_i]$ 
for some irreducible unitary supercuspidal representation $\rho_i$ of $\GL_{d_i}(F)$, 
and some segment $[x_i,y_i]$ with $x_i \geq y_i$; 
\item
$\sigma$ is an irreducible tempered representation of $G_{n_0}$; 
\item
$x_1+y_1 \leq \dots \leq x_r+y_r < 0$
\end{itemize}
such that $\pi$ is a unique irreducible subrepresentation of the parabolically induced representation 
$\tau_1 \times \dots \times \tau_r \rtimes \sigma$. 
In this case, we write
\[
\pi = L(\tau_1, \dots, \tau_r; \sigma), 
\]
and call it the \textbf{Langlands subrepresentation} of $\tau_1 \times \dots \times \tau_r \rtimes \sigma$. 
Note that $L(\tau_1, \dots, \tau_r; \sigma) \cong L(\tau'_1, \dots, \tau'_{r'}; \sigma')$
if and only if $\tau_1 \times \dots \times \tau_r \rtimes \sigma \cong \tau'_1 \times \dots \times \tau'_{r'} \rtimes \sigma'$. 
We refer $(\tau_1, \dots, \tau_r; \sigma)$ as the \textbf{Langlands data} of $\pi$.
For a detail, see \cite{K}. 

\subsection{Derivatives}
\begin{defi}\label{der-classical}
Let $\pi$ be a smooth representation of $G_n$. 
\begin{enumerate}
\item
Consider $\semi\Jac_{P_d}(\pi)$ 
(and a fixed irreducible supercuspidal unitary representation $\rho$ of $\GL_d(F)$). 
If
\[
\semi\Jac_{P_d}(\pi) = \bigoplus_{i \in I} \tau_i \boxtimes \pi_i
\]
with $\tau_i$ (\resp $\pi_i$) being an irreducible representation of $\GL_d(F)$ (\resp $G_{n-d}$), 
for $x \in \R$, we define a \textbf{partial Jacquet module} $\Jac_{\rho|\cdot|^x}(\pi)$ by
\[
\Jac_{\rho|\cdot|^x}(\pi) = \bigoplus_{\substack{i \in I\\ \tau_i \cong \rho|\cdot|^x }}\pi_i.
\]
This is a representation of $G_{n-d}$.
For pairs $(\rho_1, x_1), \dots, (\rho_t, x_t)$, 
we also set 
$\Jac_{\rho_1|\cdot|^{x_1}, \dots, \rho_t|\cdot|^{x_t}} 
= \Jac_{\rho_t|\cdot|^{x_t}} \circ \dots \circ \Jac_{\rho_1|\cdot|^{x_1}}$.

\item
For a non-negative integer $k$, 
the \textbf{$k$-th derivative of $\pi$ with respect to $\rho|\cdot|^x$} is 
the $k$-th composition
\[
D_{\rho|\cdot|^x}^{(k)}(\pi) 
= \frac{1}{k!}\underbrace{\Jac_{\rho|\cdot|^x} \circ \dots \circ \Jac_{\rho|\cdot|^x}}_k(\pi).
\]
If $D_{\rho|\cdot|^x}^{(k)}(\pi) \not= 0$ but $D_{\rho|\cdot|^x}^{(k+1)}(\pi) = 0$, 
we call $D_{\rho|\cdot|^x}^{(k)}(\pi)$ \textbf{the highest derivative of $\pi$ with respect to $\rho|\cdot|^x$}.
\end{enumerate}
\end{defi}

The derivative $D_{\rho|\cdot|^x}^{(k)}(\pi)$ is a representation of some group $G_{n'}$ of the same type as $G_n$. 
In fact, it is characterized so that 
\[
\semi\Jac_{P_{dk}}(\pi) = 
(\rho|\cdot|^{x})^{k}
\otimes D_{\rho|\cdot|^x}^{(k)}(\pi) + \sum_{i \in I} \tau_i \otimes \pi_i
\]
where $\tau_i$ is an irreducible representation of $\GL_{dk}(F)$ 
such that $\tau_i \not\cong (\rho|\cdot|^{x})^{k}$. 
\par

The following is essentially the same as \cite[Lemma 3.1.3]{J1}.
For the convenience of the readers, we give a proof. 
\begin{prop}\label{highest}
Let $\pi$ be an irreducible representation of $G_n$, 
and $D_{\rho|\cdot|^x}^{(k)}(\pi)$ be the highest derivative. 

\begin{enumerate}
\item
There exists an irreducible representation $\pi'$ of some group $G_{n'}$ of the same type as $G_n$
such that $D_{\rho|\cdot|^x}^{(k)}(\pi) = m \cdot \pi'$ with a positive integer $m$.
Moreover, $\pi$ is an irreducible subrepresentation of the parabolically induced representation 
\[
\underbrace{\rho|\cdot|^x \times \dots \times \rho|\cdot|^x}_k \rtimes \pi'. 
\]

\item
If $\rho^\vee|\cdot|^{-x} \not\cong \rho|\cdot|^{x}$ (in particular, if $x \not= 0$), 
then $D_{\rho|\cdot|^x}^{(k)}(\pi)$ is irreducible, i.e., $m=1$. 
Moreover, in this case, 
$\pi$ is a unique irreducible subrepresentation of the above parabolically induced representation. 

\end{enumerate}
\end{prop}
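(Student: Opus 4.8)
The plan is to prove the two parts in sequence, using the characterization of the highest derivative via the Jacquet module $\semi\Jac_{P_{dk}}(\pi)$ recalled just before the statement. First I would handle part (1). Since $D_{\rho|\cdot|^x}^{(k)}(\pi)\neq 0$, pick an irreducible constituent $\pi'$ of $D_{\rho|\cdot|^x}^{(k)}(\pi)$. By the displayed characterization, $(\rho|\cdot|^x)^k\otimes\pi'$ appears in $\semi\Jac_{P_{dk}}(\pi)$, so by Frobenius reciprocity $\mathrm{Hom}_{G_n}\!\big(\pi,(\rho|\cdot|^x)^k\rtimes\pi'\big)\neq 0$; since $\pi$ is irreducible, $\pi$ embeds in $(\rho|\cdot|^x)^k\rtimes\pi'$. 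It remains to see that $D_{\rho|\cdot|^x}^{(k)}(\pi)$ is a multiple of a \emph{single} irreducible. For this I would apply $\Jac_{\rho|\cdot|^x}^{(k)}$ to the embedding $\pi\hookrightarrow(\rho|\cdot|^x)^k\rtimes\pi'$ — equivalently compute $\semi\Jac_{P_{dk}}$ of the right-hand side via Tadi\'c's formula (Theorem \ref{tadic}) and the $\GL$ formulas — and read off the $(\rho|\cdot|^x)^k\otimes(-)$ isotypic part: it is $(\rho|\cdot|^x)^k\otimes\pi'$ (with multiplicity one coming from the $\GL$-side) plus possibly extra terms whose $\GL$-factor is a different irreducible of $\GL_{dk}(F)$. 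Exactness of Jacquet functors then forces $D_{\rho|\cdot|^x}^{(k)}(\pi)$ to be a subquotient of $m'\cdot\pi'$ for some $m'$; combined with $\pi'$ being a constituent, $D_{\rho|\cdot|^x}^{(k)}(\pi)=m\cdot\pi'$ for some $m\geq 1$.

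For part (2), assume $\rho^\vee|\cdot|^{-x}\not\cong\rho|\cdot|^x$; I want $m=1$ and uniqueness of the irreducible subrepresentation. The key point is a multiplicity-one / "only one copy of the relevant $\GL$-factor" phenomenon. I would compute $\semi\Jac_{P_{dk}}\!\big((\rho|\cdot|^x)^k\rtimes\pi'\big)$ using $M^*=(m\otimes\id)\circ(\vee\otimes m^*)\circ s\circ m^*$ applied to $(\rho|\cdot|^x)^k$. The coproduct $m^*\big((\rho|\cdot|^x)^k\big)$ splits $(\rho|\cdot|^x)^k$ into two tensor factors $(\rho|\cdot|^x)^{k-j}\otimes(\rho|\cdot|^x)^{j}$; after applying $\vee$ and multiplying back, a summand of $M^*\big((\rho|\cdot|^x)^k\big)$ contributing to the $(\rho|\cdot|^x)^k\otimes(-)$ part of $\mu^*$ would need $(\rho^\vee|\cdot|^{-x})^{j}\times(\text{something from }\mu^*(\pi')) $ to be isomorphic to $(\rho|\cdot|^x)^k$; since $\rho|\cdot|^x\not\cong\rho^\vee|\cdot|^{-x}$, this forces $j=0$, i.e. only the "trivial" term survives and it contributes exactly $(\rho|\cdot|^x)^k\otimes\pi'$ with multiplicity one (plus genuinely different $\GL_{dk}$-factors from other branches). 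Hence the $(\rho|\cdot|^x)^k$-isotypic component of $\semi\Jac_{P_{dk}}\big((\rho|\cdot|^x)^k\rtimes\pi'\big)$ is exactly $(\rho|\cdot|^x)^k\otimes\pi'$. Because $\pi$ occurs in this induced representation and $D_{\rho|\cdot|^x}^{(k)}(\pi)=m\cdot\pi'$ is extracted from $\semi\Jac_{P_{dk}}(\pi)\le\semi\Jac_{P_{dk}}\big((\rho|\cdot|^x)^k\rtimes\pi'\big)$, we get $m\le 1$, so $m=1$.

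Finally, for uniqueness of the irreducible subrepresentation in part (2): if $\pi_1,\pi_2$ are both irreducible subrepresentations of $(\rho|\cdot|^x)^k\rtimes\pi'$, then each embeds there, so by Frobenius reciprocity $(\rho|\cdot|^x)^k\otimes\pi'$ is a quotient of $\Jac_{P_{dk}}(\pi_i)$; in particular $D_{\rho|\cdot|^x}^{(k)}(\pi_i)\ni\pi'$, so (by part (1) and the $m=1$ statement just proved) $D_{\rho|\cdot|^x}^{(k)}(\pi_i)=\pi'$ and each $\pi_i$ embeds in $(\rho|\cdot|^x)^k\rtimes\pi'$ as the piece whose $\Jac_{P_{dk}}$ surjects onto the one-dimensional (multiplicity-one) space $(\rho|\cdot|^x)^k\otimes\pi'$. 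Since that isotypic component has length one in $\semi\Jac_{P_{dk}}\big((\rho|\cdot|^x)^k\rtimes\pi'\big)$, the subspace of $(\rho|\cdot|^x)^k\rtimes\pi'$ generated via Frobenius reciprocity is the same for $\pi_1$ and $\pi_2$; intersecting $\pi_1$ and $\pi_2$ inside $(\rho|\cdot|^x)^k\rtimes\pi'$ gives a nonzero subrepresentation (its $\Jac_{P_{dk}}$ still surjects onto $(\rho|\cdot|^x)^k\otimes\pi'$), which by irreducibility must equal both, so $\pi_1=\pi_2$. I expect the main obstacle to be the bookkeeping in the Tadi\'c-formula computation of $\semi\Jac_{P_{dk}}\big((\rho|\cdot|^x)^k\rtimes\pi'\big)$ — specifically, justifying cleanly that no "cancellation" or coincidental isomorphism among induced $\GL$-representations produces an extra copy of $(\rho|\cdot|^x)^k$ on the $\GL$-factor when $\rho|\cdot|^x\not\cong\rho^\vee|\cdot|^{-x}$; the non-self-duality hypothesis is exactly what rules this out, and packaging this as a short multiplicity argument rather than a case analysis is the delicate part.
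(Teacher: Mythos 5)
Your overall strategy matches the paper's: establish an embedding $\pi\hookrightarrow(\rho|\cdot|^x)^k\rtimes\pi'$ and then use Tadi\'c's formula (Theorem \ref{tadic}) to read off the multiplicity of $(\rho|\cdot|^x)^k\otimes(-)$ in $\semi\Jac_{P_{dk}}$ of the induced representation. However, there are two genuine gaps.

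First, the step ``$(\rho|\cdot|^x)^k\otimes\pi'$ appears in $\semi\Jac_{P_{dk}}(\pi)$, so by Frobenius reciprocity $\mathrm{Hom}_{G_n}(\pi,(\rho|\cdot|^x)^k\rtimes\pi')\neq 0$'' is not valid: Frobenius reciprocity requires $(\rho|\cdot|^x)^k\otimes\pi'$ to be a \emph{quotient} of $\Jac_{P_{dk}}(\pi)$, not merely a subquotient of its semisimplification. The fix is the standard one: by the Bernstein decomposition, the $(\rho|\cdot|^x)^k$-cuspidal-support block of $\Jac_{P_{dk}}(\pi)$ is a \emph{direct summand}, and since $(\rho|\cdot|^x)^k$ is the unique irreducible $\GL_{dk}(F)$-representation with that cuspidal support, this summand has the form $(\rho|\cdot|^x)^k\otimes\Sigma$; one must then take $\pi'$ to be an irreducible \emph{quotient} of $\Sigma$, not an arbitrary constituent of $D_{\rho|\cdot|^x}^{(k)}(\pi)$. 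This is exactly why the paper invokes the argument of Xu's \cite[Lemma 5.3]{X1} rather than bare Frobenius reciprocity. If $\pi'$ is chosen badly (not a quotient), there is no embedding and the whole computation collapses.

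Second, in part (2) your claim that ``only the trivial term survives'' and yields multiplicity one is missing the crucial input $\Jac_{\rho|\cdot|^x}(\pi')=0$. The non-self-duality hypothesis kills the terms where the left factor acquires a $\rho^\vee|\cdot|^{-x}$, but there remain potential contributions where the $\GL_{dk}$-factor $(\rho|\cdot|^x)^k$ is assembled as $(\rho|\cdot|^x)^i\times\tau_l$ with $\tau_l\cong(\rho|\cdot|^x)^{k-i}$ coming from $\mu^*(\pi')$ and $i<k$. These terms would contribute $(\rho|\cdot|^x)^k\otimes\bigl((\rho|\cdot|^x)^{k-i}\rtimes\sigma_l\bigr)$, not $(\rho|\cdot|^x)^k\otimes\pi'$, and nothing in your parametrization rules them out; you appear to drop the intermediate $(\rho|\cdot|^x)^i$ factor entirely when you write ``$(\rho^\vee|\cdot|^{-x})^j\times(\text{something from }\mu^*(\pi'))$.'' What eliminates them is that $\pi'$ is an irreducible constituent of the \emph{highest} derivative, so $\Jac_{\rho|\cdot|^x}(\pi')=0$ by exactness and hence $\mu^*(\pi')$ has no $(\rho|\cdot|^x)^{k-i}\otimes(-)$ terms for $k-i\geq 1$. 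The paper notes this observation explicitly before applying Tadi\'c's formula. With these two repairs your proof is correct, and your uniqueness argument via intersections inside $(\rho|\cdot|^x)^k\rtimes\pi'$ is a valid alternative to the paper's Grothendieck-group counting, though it is longer.
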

\begin{proof}
By the same argument as the proof of \cite[Lemma 5.3]{X1}, 
there exists an irreducible representation $\pi'$ of some group $G_{n'}$ such that
\[
\pi \hookrightarrow 
\underbrace{\rho|\cdot|^x \times \dots \times \rho|\cdot|^x}_k \rtimes \pi'
= (\rho|\cdot|^{x})^{k} \rtimes \pi'.
\]
Note that $\Jac_{\rho|\cdot|^x}(\pi') = 0$ since $D_{\rho|\cdot|^x}^{(k)}(\pi)$ is the highest derivative. 
By Theorem \ref{tadic}, we see that
\[
\Jac_{\rho|\cdot|^x}\left(
(\rho|\cdot|^{x})^{k} \rtimes \pi'
\right)
= m_k \cdot (\rho|\cdot|^{x})^{k-1} \rtimes \pi', 
\]
where 
\[
m_k = 
\left\{
\begin{aligned}
&k  \iif \rho^\vee|\cdot|^{-x} \not\cong \rho|\cdot|^{x}, \\
&2k \iif \rho^\vee|\cdot|^{-x} \cong \rho|\cdot|^{x}.
\end{aligned}
\right.
\]
This implies that
\[
D_{\rho|\cdot|^x}^{(k)}\left(
(\rho|\cdot|^{x})^{k} \rtimes \pi'
\right)
= c_k \cdot \pi' 
\]
with
\[
c_k = 
\left\{
\begin{aligned}
&1  \iif \rho^\vee|\cdot|^{-x} \not\cong \rho|\cdot|^{x}, \\
&2^k \iif \rho^\vee|\cdot|^{-x} \cong \rho|\cdot|^{x}.
\end{aligned}
\right.
\]
Therefore $D_{\rho|\cdot|^x}^{(k)}(\pi) = m \cdot \pi'$ with a positive integer $m \leq c_k$.
This shows (1) and the first assertion of (2). 
For the last assertion of (2), 
we note that 
if $\pi_1$ is an irreducible subrepresentation of $(\rho|\cdot|^x)^k \rtimes \pi'$, 
then $D_{\rho|\cdot|^x}^{(k)}(\pi_1) \not= 0$. 
However, when $\rho^\vee|\cdot|^{-x} \not\cong \rho|\cdot|^{x}$, we have
\[
D_{\rho|\cdot|^x}^{(k)}\left(
(\rho|\cdot|^{x})^{k} \rtimes \pi' - \pi 
\right)=0. 
\]
This means that $(\rho|\cdot|^{x})^{k} \rtimes \pi'$ contains $\pi$
as a unique irreducible subrepresentation (with multiplicity one). 
\end{proof}

\subsection{$\rho$-data}\label{s.data}
In this subsection, 
we introduce the $\rho$-data of irreducible representations. 
\par

\begin{defi}\label{Mdata}
Let $\pi$ be an irreducible representation of $G_n$. 
For $\epsilon \in \{\pm\}$, 
the \textbf{$\rho$-data} of $\pi$ is of the form 
\[
M^\epsilon_\rho(\pi) = \left[ (x_1,k_1), \dots, (x_t,k_t); \pi_0 \right], 
\]
where $x_i$ is a real number and $k_i$ is a positive integer, 
defined inductively as follows. 

\begin{enumerate}
\item
If $\Jac_{\rho|\cdot|^x}(\pi) = 0$ for any $x \in \R$, 
we set $M^\epsilon_\rho(\pi) = \left[ \pi \right]$ (so that $t=0$ and $\pi_0 = \pi$).

\item
If $\Jac_{\rho|\cdot|^x}(\pi) \not= 0$ for some $x \in \R$, 
we set 
\[
x_1 = \left\{
\begin{aligned}
&\max\{x \in \R \;|\; \Jac_{\rho|\cdot|^x}(\pi) \not= 0\} \iif \epsilon = +, \\
&\min\{x \in \R \;|\; \Jac_{\rho|\cdot|^x}(\pi) \not= 0\} \iif \epsilon = -
\end{aligned}
\right.
\] 
and $k_1 \geq 1$ to be such that 
$D_{\rho|\cdot|^{x_1}}^{(k_1)}(\pi)$ is the highest derivative of $\pi$ with respect to $\rho|\cdot|^{x_1}$. 
By Proposition \ref{highest}, 
we can write $D_{\rho|\cdot|^{x_1}}^{(k_1)}(\pi) = m \cdot \pi'$ for some irreducible representation $\pi'$. 
Then we define 
\[
M^\epsilon_\rho(\pi) = \left[ (x_1,k_1); M^\epsilon_\rho(\pi') \right].
\]

\end{enumerate}
\end{defi}

Let $\pi$ be an irreducible representation of $G_n$.
Then one can define another irreducible representation $\hat\pi$, 
which is called the \textbf{Aubert dual} of $\pi$ (see \cite{Au}). 
It is known that 
\begin{itemize}
\item
$\hat{\hat\pi} = \pi$;
\item
if $\pi$ is supercuspidal, then $\hat\pi = \pi$; 
\item
$D_{\rho|\cdot|^{x}}^{(k)}(\hat\pi)$ is the Aubert dual of $D_{\rho^\vee|\cdot|^{-x}}^{(k)}(\pi)$. 
\end{itemize}
In particular, for $\epsilon \in \{\pm\}$, 
if 
\[
M^\epsilon_\rho(\pi) = \left[ (x_1,k_1), \dots, (x_t,k_t); \pi_0 \right] , 
\]
then 
\[
M^{-\epsilon}_{\rho^\vee}(\hat\pi) = \left[ (-x_1,k_1), \dots, (-x_t,k_t); \hat\pi_0 \right]. 
\]
\par

We will use $M^-_\rho(\pi)$ mainly. 
By taking the Aubert dual, several properties of $M^-_\rho$ 
can be translated into ones of $M^+_{\rho^\vee}$. 
The following is the most important property of $M^-_\rho$. 

\begin{thm}\label{data}
Let $\pi$ be an irreducible representation of $G_n$. 
Then the $\rho$-data $M^-_\rho(\pi)$ can be rewritten as 
\[
M^-_\rho(\pi) = \left[
(x_1^{(1)}, k_1^{(1)}), \dots, (x_{t_1}^{(1)}, k_{t_1}^{(1)}), 
\dots, 
(x_1^{(r)}, k_1^{(r)}), \dots, (x_{t_r}^{(r)}, k_{t_r}^{(r)}); 
\pi_0
\right]
\]
such that
\begin{itemize}
\item
$x_{j+1}^{(i)} = x_{j}^{(i)}-1$ and $k_{j+1}^{(i)} \leq k_j^{(i)}$ for $1 \leq i \leq r$ and $1 \leq j \leq t_i-1$; 
\item
$x_{1}^{(1)} < x_{1}^{(2)} < \dots < x_1^{(r)}$. 
\end{itemize}
Moreover, if we set $\tau^{(i)}_j = \Delta_\rho[x_1^{(i)}, x_{j}^{(i)}]$, 
then $\tau_j^{(i)} \times \tau_{j'}^{(i)} \cong \tau_{j'}^{(i)} \times \tau_{j}^{(i)}$ for any $1 \leq j,j' \leq t_r$, 
and $\pi$ is an irreducible subrepresentation of 
\[
\left(\bigtimes_{j=1}^{t_1} \left(\tau_{j}^{(1)}\right)^{k_j^{(1)}-k_{j+1}^{(1)}}\right)
\times \dots \times 
\left(\bigtimes_{j=1}^{t_r} \left(\tau_{j}^{(r)}\right)^{k_j^{(r)}-k_{j+1}^{(r)}}\right)
\rtimes \pi_0, 
\]
where we set $k_{t_i+1}^{(i)} = 0$ for $1 \leq i \leq r$.
\end{thm}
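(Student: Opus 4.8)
The plan is to prove the statement by induction on $n$ (the case $n=0$ being vacuous), the inductive step being a detailed analysis of the \emph{first chain} produced by the algorithm of Definition \ref{Mdata}. The core is the following two-step lemma: if $\pi$ is irreducible, $x=\min\{y\mid\Jac_{\rho|\cdot|^y}(\pi)\ne0\}$, $D^{(k)}_{\rho|\cdot|^x}(\pi)=m\cdot\pi'$ is the highest derivative, and $x'=\min\{y\mid\Jac_{\rho|\cdot|^y}(\pi')\ne0\}$, then (i) $x'=x-1$ or $x'>x$; and (ii) if $x'=x-1$ with highest derivative $D^{(k')}_{\rho|\cdot|^{x-1}}(\pi')=m'\cdot\pi''$, then $k'\le k$ and $\pi\hookrightarrow(\rho|\cdot|^x)^{k-k'}\times\Delta_\rho[x,x-1]^{k'}\rtimes\pi''$. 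For (i): by Proposition \ref{highest} one has $\Jac_{\rho|\cdot|^x}(\pi')=0$, and if $y<x$ with $y\ne x-1$ then $\rho|\cdot|^x\times\rho|\cdot|^y$ is irreducible, so $\Jac_{\rho|\cdot|^x}$ and $\Jac_{\rho|\cdot|^y}$ commute on semisimplifications (a consequence of Theorem \ref{tadic}) and $\Jac_{\rho|\cdot|^y}(\pi')\ne0$ would contradict the minimality of $x$ for $\pi$. For (ii): since $x$ is minimal, $\Jac_{\rho|\cdot|^{x-1}}(\pi)=0$; feeding the embedding $\pi\hookrightarrow(\rho|\cdot|^x)^k\times(\rho|\cdot|^{x-1})^{k'}\rtimes\pi''$ into Tadi\'c's formula and using the Zelevinsky exact sequence for $\rho|\cdot|^{x-1}\times\rho|\cdot|^x$ (Theorem \ref{zel}), one finds that each copy of $\rho|\cdot|^{x-1}$ must pair off with a copy of $\rho|\cdot|^x$ into a segment $\Delta_\rho[x,x-1]$ (else a nonzero $\rho|\cdot|^{x-1}$-derivative of $\pi$ would persist), which is exactly the asserted refinement with $k'\le k$. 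This absorption argument is the classical-group analogue of the combinatorics behind Theorem \ref{LR}.

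Iterating the two-step lemma while the minimum keeps decreasing by $1$ produces the first chain $(x_1^{(1)},k_1^{(1)}),\dots,(x_1^{(1)}-(t_1-1),k_{t_1}^{(1)})$ with $k_1^{(1)}\ge\dots\ge k_{t_1}^{(1)}$, an irreducible representation $\pi^{(1)}$ at which this chain terminates, and an embedding $\pi\hookrightarrow(\rho|\cdot|^{x_1^{(1)}})^{k_1^{(1)}}\times\dots\times(\rho|\cdot|^{x_1^{(1)}-(t_1-1)})^{k_{t_1}^{(1)}}\rtimes\pi^{(1)}$. Two things must be checked, and this is where I expect the main difficulty to lie. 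First, that the minimum of $\pi^{(1)}$ is strictly greater than $x_1^{(1)}$, so that the next chain starts strictly above $x_1^{(1)}$ (this gives the strict inequalities $x_1^{(1)}<\dots<x_1^{(r)}$ in the statement): for $y\le x_1^{(1)}$ incongruent to $x_1^{(1)}$ modulo $\Z$ this is the commutation argument of (i) propagated through the whole chain back to $\pi$; for $y\in\{x_1^{(1)}-(t_1-1),\dots,x_1^{(1)}\}$ the termination hypothesis together with (i) applied at the appropriate stage forces $\Jac_{\rho|\cdot|^y}(\pi^{(1)})=0$ --- the genuinely delicate case being to exclude a ``jump back'' to an already-visited value, which requires a two-step refinement of the absorption argument of (ii), exploiting that the copies of $\rho|\cdot|^{x_1^{(1)}}$ are exhausted after the first derivative and that $\rho|\cdot|^{x}\times\Delta_\rho[x,x-1]$ is irreducible (Theorem \ref{zel}). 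Second, that the $\GL$-part of the displayed embedding can be re-bundled: since the segments $\tau_j^{(1)}=\Delta_\rho[x_1^{(1)},x_1^{(1)}-(j-1)]$ share their left endpoint they are pairwise nested, so by Theorem \ref{zel} (applied repeatedly, cf.\ \cite{LM1}) the product $(\tau_1^{(1)})^{k_1^{(1)}-k_2^{(1)}}\times\dots\times(\tau_{t_1}^{(1)})^{k_{t_1}^{(1)}}$ (with $k_{t_1+1}^{(1)}=0$) is irreducible with the same cuspidal support as that $\GL$-part and is its unique irreducible subrepresentation for a suitable ordering, whence $\pi\hookrightarrow(\tau_1^{(1)})^{k_1^{(1)}-k_2^{(1)}}\times\dots\times(\tau_{t_1}^{(1)})^{k_{t_1}^{(1)}}\rtimes\pi^{(1)}$; the same nestedness gives the commutativity $\tau_j^{(i)}\times\tau_{j'}^{(i)}\cong\tau_{j'}^{(i)}\times\tau_j^{(i)}$ within each chain.

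To conclude, note that $\pi^{(1)}$ is a representation of $G_{n'}$ with $n'<n$, and that by construction $M^-_\rho(\pi)=[(x_1^{(1)},k_1^{(1)}),\dots,(x_1^{(1)}-(t_1-1),k_{t_1}^{(1)});M^-_\rho(\pi^{(1)})]$. Applying the inductive hypothesis to $\pi^{(1)}$ gives the chain decomposition of $M^-_\rho(\pi^{(1)})$, all of whose chain-starts are $>x_1^{(1)}$ by the first point above; prepending the first chain therefore yields the asserted decomposition of $M^-_\rho(\pi)$, and composing the embedding $\pi\hookrightarrow(\tau_1^{(1)})^{k_1^{(1)}-k_2^{(1)}}\times\dots\times(\tau_{t_1}^{(1)})^{k_{t_1}^{(1)}}\rtimes\pi^{(1)}$ with the embedding of $\pi^{(1)}$ supplied by the inductive hypothesis yields the asserted embedding of $\pi$. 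The main obstacle, as indicated, is the two-step absorption estimate together with its iterated consequences (the ``no jump back'' property and the re-bundling into nested Steinberg representations); everything else is bookkeeping with Tadi\'c's formula and the Langlands classification.
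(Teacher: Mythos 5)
Your proposal follows the paper's own proof in all essentials: the minimality-plus-commutation argument to force $x_{j+1}=x_j-1$ or a strict jump upward, the Zelevinsky exact sequence combined with minimality of $x_1$ to deduce $k_{j+1}\le k_j$ and absorb the $\rho|\cdot|^{x_j-1}$-factors into segments $\Delta_\rho[x_1^{(i)},x_j^{(i)}]$, and the contradiction with the maximality of the derivative degree $k_b$ to rule out a jump back to an already-visited exponent, all appear verbatim in the paper. The only difference is organizational (you frame it as an explicit induction on $n$, the paper iterates directly through the $\rho$-data), and your pointer at the jump-back case is no sketchier than the paper's own one-line invocation of Theorem~\ref{zel} there.
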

\begin{proof}
Write $M^-_\rho(\pi) = \left[ (x_1,k_1), \dots, (x_t,k_t); \pi_0 \right]$. 
Note that $x_{i+1} \not= x_i$ by definition.
Suppose that $x_{i+1} < x_i$. 
Replacing $\pi$ with the (unique) irreducible representation appearing in
$D_{\rho|\cdot|^{x_{i-1}}}^{(k_{i-1})} \circ \dots \circ D_{\rho|\cdot|^{x_1}}^{(k_1)}(\pi)$, 
we may assume that $i=1$.
If we write $D_{\rho|\cdot|^{x_2}}^{(k_2)} \circ D_{\rho|\cdot|^{x_1}}^{(k_1)}(\pi) = m \cdot \pi'$, 
then 
\[
\pi \hookrightarrow 
\underbrace{\rho|\cdot|^{x_1} \times \dots \times \rho|\cdot|^{x_1}}_{k_1} \times
\underbrace{\rho|\cdot|^{x_2} \times \dots \times \rho|\cdot|^{x_2}}_{k_2} \rtimes \pi'.
\]
If $x_2 < x_1$ but $x_2 \not= x_1 -1$, 
then $\rho|\cdot|^{x_1} \times \rho|\cdot|^{x_2} \cong \rho|\cdot|^{x_2} \times \rho|\cdot|^{x_1}$
so that $\Jac_{\rho|\cdot|^{x_2}}(\pi) \not= 0$.
This contradicts the definition of $x_1$. 
Hence $x_2 = x_1-1$. 
In this case, we have an exact sequence
\[
\begin{CD}
0 
@>>> \Delta_\rho[x_1,x_2] 
@>>> \rho|\cdot|^{x_1} \times \rho|\cdot|^{x_2} 
@>>> L(\rho|\cdot|^{x_2}, \rho|\cdot|^{x_1}) @>>> 0. 
\end{CD}
\]
Note that $\Jac_{\rho|\cdot|^{x_2}}(L(\rho|\cdot|^{x_2}, \rho|\cdot|^{x_1})) \not= 0$. 
Since 
$\rho|\cdot|^{x_1} \times L(\rho|\cdot|^{x_2}, \rho|\cdot|^{x_1}) 
\cong L(\rho|\cdot|^{x_2}, \rho|\cdot|^{x_1}) \times \rho|\cdot|^{x_1}$
by \cite[Theorem 4.2]{Z}, we must have
\[
\pi \hookrightarrow 
(\rho|\cdot|^{x_1})^{k_1-k_0} \times 
\Delta_\rho[x_1,x_2] ^{k_0} \times 
(\rho|\cdot|^{x_2})^{k_2-k_0} \rtimes \pi', 
\]
where we set $k_0 = \min\{k_1,k_2\}$. 
If $k_2 > k_1$ so that $k_0=k_1$, by Theorem \ref{zel}, 
we would have
\[
\pi \hookrightarrow 
(\rho|\cdot|^{x_2})^{k_2-k_1} \times 
\Delta_\rho[x_1,x_2] ^{k_0} \rtimes \pi', 
\]
which implies that $\Jac_{\rho|\cdot|^{x_2}}(\pi) \not= 0$.
This contradicts the definition of $x_1$. 
Hence we have $k_2 \leq k_1$.
\par

We conclude that if $x_1 > \dots > x_a$, then $x_{j+1} = x_j-1$ and $k_{j+1} \leq k_j$ for $1 \leq j \leq a-1$. 
Moreover, in this case, $\pi$ is a subrepresentation of 
\[
\Delta_\rho[x_1,x_1]^{k_1-k_2}
\times \dots \times 
\Delta_\rho[x_1,x_{a-1}]^{k_{a-1}-k_a}
\times 
\Delta_\rho[x_1,x_{a}]^{k_a}
\rtimes \pi'
\]
for some irreducible representation $\pi'$.
Now suppose that $x_{a} < x_{a+1}$. 
\begin{itemize}
\item
If $x_a < x_{a+1} < x_1$ and $x_{a+1} \not= x_b$ for any $1 \leq b < a$, 
by Theorem \ref{zel}, we have $\Jac_{\rho|\cdot|^{x_{a+1}}}(\pi) \not= 0$. 
This contradicts the definition of $x_1$. 

\item
If $x_{a+1} = x_b$ for some $1 \leq b < a$, by Theorem \ref{zel}, we have 
\[
D_{\rho|\cdot|^{x_{b}}}^{(k_{b}+1)} \circ 
D_{\rho|\cdot|^{x_{b-1}}}^{(k_{b-1})} \circ \dots \circ D_{\rho|\cdot|^{x_1}}^{(k_1)}(\pi) \not= 0.
\] 
This contradicts the definition of $k_b$. 
\end{itemize}
Hence we must have $x_{a+1} > x_1$.
Therefore $M^-_\rho(\pi)$ can be written as in the statement. 
The above argument also shows the other statements. 
\end{proof}

To obtain some consequences, let us prepare a useful lemma. 
\begin{lem}\label{ind}
Let $\pi$ be an irreducible representation of $G_n$. 
Write $M^-_\rho(\pi) = \left[ (x_1,k_1), \dots, (x_t,k_t); \pi_0 \right]$. 
Suppose that there exists an inclusion
\[
\pi \hookrightarrow \Delta_\rho[x,y] \rtimes \pi'
\]
with some irreducible representation $\pi'$ such that $x+y < 0$. 
Then there exists $i \leq t-(x-y)$ such that $x_i = x, x_{i+1} = x-1, \dots, x_{i+x-y} = y$. 
Moreover, if $i>1$, then $x_{i-1} < x$. 
\end{lem}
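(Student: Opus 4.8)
The strategy is to read off the segment $[x,y]$ directly from the inductive construction of $M^-_\rho(\pi)$ in Definition \ref{Mdata}, using the structure theorem (Theorem \ref{data}) to control how segments can appear. First I would invoke Theorem \ref{data} to write $M^-_\rho(\pi)$ in its normalized block form
\[
M^-_\rho(\pi) = \left[
(x_1^{(1)}, k_1^{(1)}), \dots, (x_{t_1}^{(1)}, k_{t_1}^{(1)}),
\dots,
(x_1^{(r)}, k_1^{(r)}), \dots, (x_{t_r}^{(r)}, k_{t_r}^{(r)});
\pi_0
\right],
\]
where consecutive entries inside a block drop by $1$ and the leading exponents of the blocks strictly increase. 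The inclusion $\pi \hookrightarrow \Delta_\rho[x,y] \rtimes \pi'$ forces $\Jac_{\rho|\cdot|^{x}}(\pi) \neq 0$ (since $\Jac_{\rho|\cdot|^{x}}(\Delta_\rho[x,y]) \neq 0$ and $x+y<0$ prevents cancellation from $\pi'$ at the top of the cuspidal support on the $\rho$-line), so some $x_i$ equals $x$; the hypothesis $x+y<0$ together with the minimality defining $x_1$ (it is the \emph{smallest} exponent with nonvanishing partial Jacquet module) and the "first" block structure will pin down which block and which position $i$ the value $x$ occupies.

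The main work is an induction on $x-y$, peeling off $\rho|\cdot|^{x}$ one step at a time. Having located $x_i = x$, I would apply $D_{\rho|\cdot|^{x_1}}^{(k_1)} \circ \cdots$ down to the stage where the relevant derivative is taken, or more efficiently argue as in the proof of Theorem \ref{data}: from $\pi \hookrightarrow \Delta_\rho[x,y]\rtimes\pi'$ and $\Delta_\rho[x,y] \hookrightarrow \rho|\cdot|^{x} \times \Delta_\rho[x-1,y]$ one gets $\pi \hookrightarrow \rho|\cdot|^{x} \times \Delta_\rho[x-1,y] \rtimes \pi'$, hence after taking the highest derivative $D_{\rho|\cdot|^{x}}^{(k_i)}$ the resulting irreducible $\pi^\flat$ (the unique irreducible in the derivative, using Proposition \ref{highest}(2) since $x<0\neq 0$) admits $\pi^\flat \hookrightarrow \Delta_\rho[x-1,y]\rtimes\pi''$ for a suitable $\pi''$, with $(x-1)+y<0$ still holding. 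By induction $M^-_\rho(\pi^\flat)$ contains the run $x-1, x-2, \dots, y$ at positions $i, \dots, i+(x-1-y)$ of its truncated data, and since $M^-_\rho(\pi^\flat)$ is exactly $M^-_\rho(\pi)$ with its first $i$ entries removed (up to the reindexing inside the block containing $x$, where the exponent $x$ drops to $x-1$ but the subsequent run is untouched), this gives $x_{i+1} = x-1, \dots, x_{i+x-y} = y$ as claimed, and bounds the index: the run occupies positions $i$ through $i+(x-y)$, so $i \le t-(x-y)$.

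For the final clause, suppose $i>1$, so $x_{i-1}$ is defined. By the block structure of Theorem \ref{data}, either $x_{i-1}$ lies in the same block as $x_i = x$, in which case $x_{i-1} = x+1 > x$ — but then $x+1, x, x-1, \dots, y$ would all be a run, contradicting nothing directly, so I must instead use the Langlands-data ordering: inside a block the entries \emph{decrease}, so $x_{i-1}$ in the same block would be $x+1$, and I need to rule this out by noting that the inclusion $\pi \hookrightarrow \Delta_\rho[x,y]\rtimes\pi'$ produces a segment \emph{starting} at $x$, which is incompatible with $x+1$ being reachable before $x$ in the greedy minimal-exponent extraction — more precisely, the derivative $D_{\rho|\cdot|^{x+1}}$ applied first would have to be nonzero, whereas Zelevinsky's theorem (Theorem \ref{zel}) applied to $\Delta_\rho[x,y]$ shows $\Jac_{\rho|\cdot|^{x+1}}$ of it vanishes, and the $x+y<0$ hypothesis again blocks a contribution from $\pi'$. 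Alternatively $x_{i-1}$ lies in an earlier block, whence $x_{i-1} < x_1^{(\cdot)} \le x$ gives $x_{i-1}<x$ directly. The delicate case — and the one I expect to be the main obstacle — is exactly separating "$x_{i-1}$ in the same block, equal to $x+1$" from the rest; handling it cleanly requires tracking how the extraction of the highest derivative at the larger exponent interacts with the segment $\Delta_\rho[x,y]$, and this is where I would lean hardest on Theorem \ref{zel} and on the argument already used in the proof of Theorem \ref{data}.
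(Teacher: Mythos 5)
Your plan is a genuinely different approach from the paper's, and as written it has a structural gap that I don't think can be patched without essentially redoing the argument. The paper inducts on $t'$, the length of $M^-_\rho(\pi')$, not on the segment length $x-y$: it writes $\pi' \hookrightarrow (\rho|\cdot|^{x'_1})^{k'_1} \rtimes \pi'_1$ and pushes $\rho|\cdot|^{x'_1}$ past $\Delta_\rho[x,y]$ to identify $(x_1,k_1) = (x'_1,k'_1)$. The delicate subcase is $x'_1 = y-1$: there $\Delta_\rho[x,y] \times \rho|\cdot|^{y-1}$ does not commute (Theorem~\ref{zel}), and from the short exact sequence one branch replaces the segment by the \emph{longer} segment $\Delta_\rho[x,y-1]$. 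This is precisely what your induction on $x-y$ cannot absorb: your induction peels the segment down from $[x,y]$ to $[x-1,y]$, but the actual obstruction makes the segment grow to $[x,y-1]$. You never address this case, and it is the crux of the lemma.

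There are also two smaller problems. First, "taking the highest derivative $D_{\rho|\cdot|^x}^{(k_i)}$" as your first move is not compatible with the recursive definition of $M^-_\rho$: that data is built by repeatedly extracting the \emph{minimal} nonzero exponent, so if $x \neq x_1$, applying $D_{\rho|\cdot|^x}$ first gives a representation $\pi^\flat$ whose $\rho$-data is \emph{not} a tail of $M^-_\rho(\pi)$ (establishing that relationship is essentially the content of the lemma, not a step you can assume). Second, you invoke "Proposition~\ref{highest}(2) since $x<0\neq 0$", but $x+y<0$ does not force $x<0$; $x$ can be $\geq 0$ with $y$ very negative, so you need a separate argument to rule out $x=0$ here (the paper avoids this by never isolating the derivative at $x$ in this way). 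Finally, you yourself flag the clause $x_{i-1}<x$ as the main obstacle and leave it heuristic; in the paper this clause comes out for free from the induction on $t'$, since each case identifies $x_1$ with $x'_1 < x$ (or $i=1$). I'd recommend restructuring along the paper's lines: induct on $t'$, treat the three cases $x'_1 \geq x$, $x'_1 < x$ with $x'_1 \neq y-1$, and $x'_1 = y-1$, using Theorem~\ref{zel} for the commutation and the exact sequence $\Delta_\rho[x,y-1] \hookrightarrow \Delta_\rho[x,y]\times\rho|\cdot|^{y-1}$ for the last case.
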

\begin{proof}
Write $M^-_\rho(\pi') = \left[ (x'_1,k'_1), \dots, (x'_{t'},k'_{t'}); \pi'_0 \right]$.
We prove the lemma by induction on $t'$. 
Note that $x_1 \leq x < -y$. 
If $t'=0$ or $x_1' \geq x$,  then $i=1$ and the assertion is trivial. 
Now we write $\pi \hookrightarrow (\rho|\cdot|^{x_1})^{k_1} \rtimes \pi_1$
and $\pi' \hookrightarrow (\rho|\cdot|^{x'_1})^{k_1'} \rtimes \pi'_1$
for some irreducible representations $\pi_1$ and $\pi_1'$. 
If $x_1' < x$ and $x_1' \not= y-1$, by Theorem \ref{zel}, we have
\[
\pi \hookrightarrow (\rho|\cdot|^{x'_1})^{k_1'} \times \Delta_\rho[x,y] \rtimes \pi'_1
\]
so that $x_1=x_1'$, $k_1=k_1'$ and $\pi_1 \hookrightarrow \Delta_\rho[x,y] \rtimes \pi'_1$.
By applying the induction hypothesis, we obtain the assertion. 
\par

Finally, we assume that $x_1' = y-1$. 
Since we have an exact sequence
\[
\begin{CD}
0 @>>> \Delta_\rho[x,y-1] @>>> \Delta_\rho[x,y] \times \rho|\cdot|^{y-1}
@>>> \rho|\cdot|^{y-1} \times \Delta_\rho[x,y], 
\end{CD}
\]
by Theorem \ref{zel}, we see that $\pi$ can be embedded into 
\begin{align*}
&(\rho|\cdot|^{y-1})^{k_1'} \times \Delta_\rho[x,y] \rtimes \pi'_1, 
\quad \text{or}\\
&(\rho|\cdot|^{y-1})^{k_1'-1} \times \Delta_\rho[x,y-1] \rtimes \pi'_1.
\end{align*}
In the former case, 
we have $x_1 = y-1$, $k_1 = k_1'$ and $\pi_1 \hookrightarrow \Delta_\rho[x,y] \rtimes \pi'_1$.
In the latter case, we have $(x_1,k_1) = (y-1,k_1'-1)$ or $k_1'=1$ and $x_1 \not= y-1$, 
and $\pi_1 \hookrightarrow \Delta_\rho[x,y-1] \rtimes \pi'_1$.
In both cases, the induction hypothesis gives the assertion. 
\end{proof}

\begin{cor}\label{minus}
Let $\pi$ be an irreducible representation of $G_n$. 
Write
\[
M^-_\rho(\pi) = \left[
(x_1^{(1)}, k_1^{(1)}), \dots, (x_{t_1}^{(1)}, k_{t_1}^{(1)}), 
\dots, 
(x_1^{(r)}, k_1^{(r)}), \dots, (x_{t_r}^{(r)}, k_{t_r}^{(r)}); 
\pi_0
\right]
\]
as in Theorem \ref{data}.
Suppose that $(x,y) = (x_{1}^{(i)}, x_{t_i}^{(i)})$ satisfies that 
$x+y < 0$ and $x+y \leq x_{1}^{(j)} + x_{t_j}^{(j)}$ for any $j <i$. 
Then there exists an irreducible representation $\pi'$ such that 
\[
\pi \hookrightarrow \Delta_\rho[x,y] \rtimes \pi'. 
\]
For any such $\pi'$, the $\rho$-data $M^-_\rho(\pi')$ is obtained from $M^-_\rho(\pi)$ 
by replacing $k_{1}^{(i)}, \dots, k_{t_i}^{(i)}$ with $k_{1}^{(i)}-1, \dots, k_{t_i}^{(i)}-1$, respectively. 
\end{cor}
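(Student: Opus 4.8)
The plan is to establish the two assertions of the corollary in turn: first the existence of the embedding $\pi\hookrightarrow\Delta_\rho[x,y]\rtimes\pi'$, then the description of $M^-_\rho(\pi')$ for every such $\pi'$.

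\emph{Existence.} I would start from the embedding $\pi\hookrightarrow\Pi$ supplied by Theorem \ref{data}, where $\Pi$ is the displayed product of the $\tau_l^{(j)}=\Delta_\rho[x_1^{(j)},x_l^{(j)}]$'s (with multiplicities $k_l^{(j)}-k_{l+1}^{(j)}$) induced from $\pi_0$; observe that $\tau_{t_i}^{(i)}=\Delta_\rho[x,y]$ occurs in $\Pi$, with multiplicity $k_{t_i}^{(i)}\ge 1$. The goal is to commute one copy of $\Delta_\rho[x,y]$ to the far left. Within block $i$ this is free, by the commutativity asserted in Theorem \ref{data}. For a factor $\tau_l^{(j)}$ of a block $j<i$, I claim $\tau_l^{(j)}\times\Delta_\rho[x,y]$ is irreducible: if the exponents of $[x_1^{(j)},x_l^{(j)}]$ and $[x,y]$ lie in different cosets of $\Z$, this is immediate from Theorem \ref{zel}; otherwise $x_1^{(j)}\le x-1$, so the hypothesis $x+y\le x_1^{(j)}+x_{t_j}^{(j)}$ forces $x_l^{(j)}\ge x_{t_j}^{(j)}\ge x+y-x_1^{(j)}\ge y+1$, whence $[x_1^{(j)},x_l^{(j)}]\subsetneq[x,y]$ as sets, the two segments are nested, hence unlinked, and Theorem \ref{zel} again gives irreducibility. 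Thus $\Pi\cong\Delta_\rho[x,y]\rtimes\Pi'$, where $\Pi'$ is the product obtained from $\Pi$ by deleting one copy of $\tau_{t_i}^{(i)}$; since $\Pi'$ has finite length, a routine socle argument (as in the proof of Proposition \ref{highest}) produces an irreducible $\pi'$ with $\pi\hookrightarrow\Delta_\rho[x,y]\rtimes\pi'$.

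\emph{The $\rho$-data of $\pi'$.} Fix any irreducible $\pi'$ with $\pi\hookrightarrow\Delta_\rho[x,y]\rtimes\pi'$. Since $x+y<0$, Lemma \ref{ind} applies and shows that $M^-_\rho(\pi)$ contains a descending run $x=x_m,\ x-1=x_{m+1},\ \dots,\ y=x_{m+(x-y)}$ with $x_{m-1}<x$ whenever $m>1$; as the block tops $x_1^{(1)}<\dots<x_1^{(r)}$ are pairwise distinct, this run must be block $i$, with bottom $x_{t_i}^{(i)}=y$ by hypothesis. I would prove the assertion by induction on the size of $M^-_\rho(\pi)$, running the recursion of Definition \ref{Mdata} for $\pi$ and for $\pi'$ in parallel. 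When $i=1$, so that $x=x_1^{(1)}$ is the minimal exponent of $\pi$, one peels $\rho|\cdot|^x$ off the top of $\Delta_\rho[x,y]$ via $\Delta_\rho[x,y]\hookrightarrow\rho|\cdot|^x\times\Delta_\rho[x-1,y]$; since $x-1<x<-y$, one checks using Theorem \ref{tadic} that the $\rho|\cdot|^x$-derivative does not see any cross-term coming from $M^*(\Delta_\rho[x-1,y])$, so that $D^{(k_1^{(1)})}_{\rho|\cdot|^x}(\pi)$ is carried into $\Delta_\rho[x-1,y]\rtimes D^{(k_1^{(1)}-1)}_{\rho|\cdot|^x}(\pi')$, that $\min\{z:\Jac_{\rho|\cdot|^z}(\pi')\ne0\}=x$, and that the highest-derivative order of $\pi'$ at $x$ is $k_1^{(1)}-1$ (when $k_1^{(1)}=1$, block $1$ vanishes under decrementing and its exponents $x,x-1,\dots,y$ are peeled off one at a time); passing to $\pi_1\hookrightarrow\Delta_\rho[x-1,y]\rtimes\pi'_1$, whose datum is smaller and still of the required form, the inductive hypothesis finishes the case. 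When $i>1$, block $1$ is untouched, and --- provided no block $j<i$ has endpoint sum equal to $x+y$ --- one peels off block $1$ from $\pi$ and from $\pi'$ (passing to the constituents of the $D^{(k_1^{(1)})}_{\rho|\cdot|^{x_1^{(1)}}}$'s), using Theorems \ref{tadic} and \ref{zel} to see that the embedding persists and the hypotheses still hold. In every case the outcome is forced by $M^-_\rho(\pi)$, the embedding, and Lemma \ref{ind}, with no dependence on the choice of $\pi'$, which gives the ``for any such $\pi'$'' clause.

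\emph{Main obstacle.} The hard part lies entirely in the second assertion. First, one must control the cross-terms predicted by Tadić's formula (most numerous when $\rho$ is self-dual) and confirm they do not affect the relevant highest-derivative orders; this works because, $x+y$ being negative, the segment $\Delta_\rho[x,y]$ sits on the ``small'' side and those cross-terms expose only exponents that are too large ($\ge-y>x$) or too small to interfere with the one being peeled. Second, and more seriously, the recursion of Definition \ref{Mdata} peels exponents in a fixed order --- smallest first, beginning with block $1$ --- which conflicts with the hypothesis precisely when some block $j<i$ also attains the minimal endpoint sum $x+y$: removing block $1$ then breaks the inequality $x+y\le x_1^{(j)}+x_{t_j}^{(j)}$ for the remaining datum. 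Such ties must be handled separately, for instance by first invoking the existence part of the statement for that block $j$ (whose segment $\Delta_\rho[x_1^{(j)},x_{t_j}^{(j)}]$ is nested in, hence commutes with, $\Delta_\rho[x,y]$) and re-reducing. This --- together with the degenerate cases in which decrementing erases a tail of block $i$, or all of block $i$, or block $i$ and the first step of the algorithm when $i=1$ --- is where the genuine care is needed; the remainder of the argument is bookkeeping built on Theorems \ref{data}, \ref{zel}, \ref{tadic}, Proposition \ref{highest}, and Lemma \ref{ind}.
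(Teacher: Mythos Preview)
Your approach is essentially the same as the paper's: existence via Theorem~\ref{data} plus Zelevinsky's criterion to commute $\Delta_\rho[x,y]$ to the front, then an inductive computation of $M^-_\rho(\pi')$ by peeling the first exponent off $\pi$ and $\pi'$ in parallel, with Lemma~\ref{ind} controlling which block of $M^-_\rho(\pi)$ the segment $[x,y]$ belongs to.

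Your ``main obstacle'', however, is overstated, and the proposed tie-breaking detour is unnecessary. You worry that after removing the top of block~$1$, the truncated block~$1$ may have endpoint sum $<x+y$, so the corollary's hypothesis fails for the reduced datum and the induction stalls. But the full endpoint-sum inequality is \emph{not} what the inductive step actually needs. The only place the hypothesis is used is in excluding the possibility that the first exponent $x_1'$ of $M^-_\rho(\pi')$ equals $y-1$; and for this it suffices to know that every exponent occurring in a block $j<i$ is strictly greater than $y$. That is precisely what the original hypothesis gives once and for all: from $x+y\le x_1^{(j)}+x_{t_j}^{(j)}$ and $x_1^{(j)}<x$ one gets $x_{t_j}^{(j)}>y$, hence every entry of block $j$ lies strictly between $y$ and $x$. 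This property is obviously preserved when one deletes the top entry of block~$1$. Thus at every later step the first exponent of the reduced $\pi$ is still $>y>y-1$, so $\Jac_{\rho|\cdot|^{y-1}}$ of the reduced $\pi$ vanishes, and together with Lemma~\ref{ind} this rules out $x_1'=y-1$ at that step too. The paper organizes its proof around exactly this single exclusion (``it suffices to show that $x_1'$ never equals $y-1$''), which is why it needs no separate treatment of ties. If you track the invariant ``all remaining exponents in blocks $j<i$ lie in $(y,x)$'' rather than the endpoint-sum inequality, your induction goes through without the extra case.
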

\begin{proof}
By the assumption, 
we notice that $x < -y$ and $x > x_{1}^{(j)} \geq x_{t_j}^{(j)} > y$ for any $j < i$.
By Theorems \ref{data} and \ref{zel}, one can find an irreducible representation $\pi'$ such that 
\[
\pi \hookrightarrow \Delta_\rho[x,y] \rtimes \pi'. 
\]
We compute the $\rho$-data $M^-_\rho(\pi') = \left[ (x_1', k_1'), \dots, (x_t',k_t'); \pi_0' \right]$
by induction on $\sum_{i=1}^r t_i$ as in the proof of Lemma \ref{ind}.
\par

If $x_1' \geq x$, then $x_1^{(1)} = x$ so that $i=1$. 
In this case, the assertion is trivial. 
If $x_1' < x$ and $x_1' \not= y-1$, then $i>1$ and $(x_1',k_1') = (x_1^{(1)}, k_1^{(1)})$. 
Moreover, by Theorem \ref{zel}, we have
\[
D_{\rho|\cdot|^{x_1'}}^{(k_1')}(\pi) \hookrightarrow 
\Delta_\rho[x,y] \rtimes D_{\rho|\cdot|^{x_1'}}^{(k_1')}(\pi'). 
\]
By the induction hypothesis, we obtain the assertion. 
\par

To complete the proof, it suffices to show that $x_1'$ never equals to $y-1$. 
Suppose that $x_1' = y-1$. 
Then there exists an irreducible representation $\pi''$ 
such that $\pi' \hookrightarrow \rho|\cdot|^{y-1} \rtimes \pi''$ 
so that
\[
\pi \hookrightarrow \Delta_\rho[x,y] \times \rho|\cdot|^{y-1} \rtimes \pi''. 
\]
Since we have an exact sequence
\[
\begin{CD}
0 @>>> \Delta_\rho[x,y-1] @>>> \Delta_\rho[x,y] \times \rho|\cdot|^{y-1}
@>>> \rho|\cdot|^{y-1} \times \Delta_\rho[x,y], 
\end{CD}
\]
we have $\Jac_{\rho|\cdot|^{y-1}}(\pi) \not= 0$ or $\pi \hookrightarrow \Delta_\rho[x,y-1] \rtimes \pi''$.
Lemma \ref{ind} eliminates the latter case. 
In the former case, we must have $x_1^{(1)} \leq y-1 < x$. 
This implies that $i > 1$ and 
\[
x_1^{(1)}+x_{t_1}^{(1)} \leq 2 x_1^{(1)} \leq 2(y-1) < 2y \leq x+y, 
\]
which contradicts our assumption. 
This completes the proof. 
\end{proof}
\par

Also we can reformulate Casselman's tempered-ness criterion as follows. 

\begin{cor}\label{criterion}
Let $\pi$ be an irreducible representation of $G_n$. 
Then $\pi$ is tempered if and only if 
for any $\rho$, we can write
\[
M^-_\rho(\pi) = \left[
(x_1^{(1)}, k_1^{(1)}), \dots, (x_{t_1}^{(1)}, k_{t_1}^{(1)}), 
\dots, 
(x_1^{(r)}, k_1^{(r)}), \dots, (x_{t_r}^{(r)}, k_{t_r}^{(r)}); 
\pi_0
\right]
\]
as in Theorem \ref{data}
such that $x_{1}^{(i)}+x_{t_i}^{(i)} \geq 0$ for any $1 \leq i \leq r$.
\end{cor}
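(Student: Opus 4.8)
The plan is to prove both implications by contraposition, using the Langlands classification together with Lemma~\ref{ind}, Corollary~\ref{minus}, and the block decomposition of Theorem~\ref{data}. Recall from the proof of Theorem~\ref{data} that the flat list $(x_1,k_1),\dots,(x_t,k_t)$ of $M^-_\rho(\pi)$ is already arranged so that within each block one decreases by $1$ and then jumps strictly upward; hence $x_i$ heads a block exactly when $i=1$ or $x_{i-1}<x_i$, and I will use this throughout.

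First I would treat the direction ``$\pi$ not tempered $\Rightarrow$ the inequality fails for some $\rho$''. By the Langlands classification $\pi=L(\tau_1,\dots,\tau_r;\sigma)$ with $r\geq1$, $\tau_1=\Delta_{\rho_1}[x_1,y_1]$ and $x_1+y_1<0$; by transitivity of Langlands subrepresentations $\pi\hookrightarrow\tau_1\rtimes\pi'$ with $\pi'=L(\tau_2,\dots,\tau_r;\sigma)$ irreducible. Applying Lemma~\ref{ind} to this embedding gives a run $x_1,x_1-1,\dots,y_1$ inside the flat list of $M^-_{\rho_1}(\pi)$ whose first entry $x_1$ is preceded, if at all, by a strictly smaller value; by the remark above $x_1$ then heads a block, and that block has head $x_1$ and bottom at most $y_1$, so its center-sum is at most $x_1+y_1<0$.

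For the converse, suppose the inequality fails for some $\rho$. Let $i$ be the smallest index with $x_1^{(i)}+x_{t_i}^{(i)}<0$; then every earlier block has non-negative center-sum, so $x_1^{(i)}+x_{t_i}^{(i)}<0\leq x_1^{(j)}+x_{t_j}^{(j)}$ for $j<i$, and Corollary~\ref{minus} produces an irreducible $\pi'$ with $\pi\hookrightarrow\Delta_\rho[x,y]\rtimes\pi'$, where $(x,y)=(x_1^{(i)},x_{t_i}^{(i)})$ and $x+y<0$. It then remains to conclude that $\pi$ is not tempered. One way: write $\pi'=L(\sigma_1,\dots,\sigma_s;T)$ and substitute to get $\pi\hookrightarrow\Delta_\rho[x,y]\times\sigma_1\times\dots\times\sigma_s\rtimes T$; since all of $\Delta_\rho[x,y],\sigma_1,\dots,\sigma_s$ have strictly negative center, using commutativity of $\RR(G)$ and reordering the general linear factors by non-decreasing center exhibits $\pi$ as a subquotient of a standard module carrying at least one segment, which has no tempered subquotient. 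Alternatively, and perhaps more cleanly: by Frobenius reciprocity $\Delta_\rho[x,y]\boxtimes\pi'$ is a quotient of $\Jac_{P_{d(x-y+1)}}(\pi)$, and the general linear exponent $(x+y)/2<0$ of $\Delta_\rho[x,y]$ already contradicts Casselman's temperedness criterion.

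The hard part will be this last step of the converse: passing from the single embedding $\pi\hookrightarrow\Delta_\rho[x,y]\rtimes\pi'$ with $x+y<0$ to the non-temperedness of $\pi$. Both routes above amount to one application of Casselman's temperedness criterion, and the real care needed is in its sign normalization: the decisive quantity is the central exponent $(x+y)/2$ of the general linear piece $\Delta_\rho[x,y]$, and temperedness of $\pi$ would force this to be $\geq0$. A minor subsidiary point, in the first direction, is verifying that the run produced by Lemma~\ref{ind} really begins a block of $M^-_{\rho_1}(\pi)$, which is precisely where the ``$x_{i-1}<x$'' clause of Lemma~\ref{ind} must be combined with the explicit block decomposition of Theorem~\ref{data}.
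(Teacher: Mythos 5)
Your proof is correct and takes essentially the same route as the paper: Lemma~\ref{ind} together with the block structure of Theorem~\ref{data} for the direction ``not tempered $\Rightarrow$ some block has negative center-sum,'' and Corollary~\ref{minus} followed by Frobenius reciprocity and the Casselman criterion for the converse. The only cosmetic differences are that you select the smallest index $i$ with $x_1^{(i)}+x_{t_i}^{(i)}<0$ whereas the paper selects the index minimizing that sum (both satisfy the hypothesis of Corollary~\ref{minus}), and you spell out the ``heads a block'' verification using the clause $x_{i-1}<x$ of Lemma~\ref{ind}, which the paper leaves implicit.
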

\begin{proof}
Suppose first that some $\rho$-data $M^-_\rho(\pi)$ of the above form 
has an index $i$ such that $x_{1}^{(i)}+x_{t_i}^{(i)} < 0$. 
We take $i$ so that $x_{1}^{(i)}+x_{t_i}^{(i)}$ achieves the minimum value. 
Then by Theorem \ref{data}, we find an irreducible representation $\pi'$ such that
\[
\pi \hookrightarrow \Delta_\rho[x_{1}^{(i)},x_{t_i}^{(i)}] \rtimes \pi', 
\]
or equivalently, 
\[
\Jac_{P_{dt_i}}(\pi) \twoheadrightarrow \Delta_\rho[x_{1}^{(i)},x_{t_i}^{(i)}] \otimes \pi'.
\]
By the Casselman criterion, we see that $\pi$ is not tempered. 
\par

Suppose conversely that $\pi$ is not tempered. 
By the Casselman criterion, 
there exist $\rho$, $[x,y]$ and $\pi'$ such that 
$\pi \hookrightarrow \Delta_\rho[x,y] \rtimes \pi'$ with $x+y < 0$.
By Lemma \ref{ind}, we conclude that there exists $i$ such that $x_1^{(i)} = x$ and $x_{t_i}^{(i)} \leq y$
so that $x_1^{(i)} + x_{t_i}^{(i)} \leq x+y < 0$. 
\end{proof}

Now we compare the $\rho$-data with the Langlands data. 
\begin{thm}\label{vs}
Let $\pi$ be an irreducible representation of $G_n$. 
Write
\[
M^-_\rho(\pi) = \left[
(x_1^{(1)}, k_1^{(1)}), \dots, (x_{t_1}^{(1)}, k_{t_1}^{(1)}), 
\dots, 
(x_1^{(r)}, k_1^{(r)}), \dots, (x_{t_r}^{(r)}, k_{t_r}^{(r)}); 
\pi_0
\right]
\]
and set $\tau^{(i)}_j = \Delta_\rho[x_1^{(i)}, x_{j}^{(i)}]$ as in Theorem \ref{data}. 
Suppose that $\pi = L(\tau_1, \dots, \tau_l; \sigma)$ with $\tau_i = \Delta_{\rho_i}[x_i, y_i]$. 
Then
\[
\bigtimes_{\substack{1 \leq i \leq l \\ \rho_i \cong \rho}} \tau_i
=
\bigtimes_{\substack{i,j \\ x_1^{(i)}+x_j^{(i)} < 0}} 
 \left(\tau^{(i)}_j\right)^{k_j^{(i)}-k_{j+1}^{(i)}}.
\]
Moreover, $\{M^-_{\rho}(\sigma)\}_\rho$ can be computed from $\{M^-_{\rho}(\pi)\}_\rho$ 
by Corollary \ref{minus}. 
\end{thm}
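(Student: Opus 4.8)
The plan is to read Corollary~\ref{minus} as a peeling algorithm, run it until the representation becomes tempered, and then identify the outcome using the uniqueness of the Langlands data.

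First I would handle the $\rho$-line. Among the blocks of $M^-_\rho(\pi)$ with negative center $x_1^{(i)}+x_{t_i}^{(i)}<0$, I would repeatedly apply Corollary~\ref{minus} to peel off $\tau^{(i)}_{t_i}=\Delta_\rho[x_1^{(i)},x_{t_i}^{(i)}]$, each time for a block whose center is as small as possible, so that the hypothesis of Corollary~\ref{minus} holds. Each such step lowers $k_1^{(i)},\dots,k_{t_i}^{(i)}$ by $1$; after $k_{t_i}^{(i)}$ steps the last entry of the block disappears and its center strictly increases, so an induction on $\sum_i t_i$ shows that the segments peeled off from block $i$ are exactly the $\tau^{(i)}_j$ with $x_1^{(i)}+x_j^{(i)}<0$, each with multiplicity $k_j^{(i)}-k_{j+1}^{(i)}$. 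This produces an irreducible representation $\pi_\rho$ with
\[
\pi\hookrightarrow \left(\bigtimes_{\substack{i,j\\ x_1^{(i)}+x_j^{(i)}<0}}\left(\tau^{(i)}_j\right)^{k_j^{(i)}-k_{j+1}^{(i)}}\right)\rtimes\pi_\rho,
\]
and $M^-_\rho(\pi_\rho)$ is $M^-_\rho(\pi)$ with these entries deleted, so every block of $M^-_\rho(\pi_\rho)$ has non-negative center.

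Next I would continue peeling negative-center blocks across all cuspidal lines at once, always choosing one of globally minimal center. Since each step replaces the ambient group by a strictly smaller one, the process terminates at an irreducible $\sigma'$ all of whose $\rho'$-data consist only of non-negative-center blocks; by Corollary~\ref{criterion}, $\sigma'$ is tempered. Listing the peeled segments $\Delta_1,\dots,\Delta_N$ in the order peeled, we obtain $\pi\hookrightarrow\Delta_1\times\dots\times\Delta_N\rtimes\sigma'$; because a segment of minimal center was chosen at each step, the centers of $\Delta_1,\dots,\Delta_N$ are non-decreasing and all negative, so $\Delta_1\times\dots\times\Delta_N\rtimes\sigma'$ is a standard module and $\pi$ is its unique irreducible subrepresentation, i.e.\ $\pi=L(\Delta_1,\dots,\Delta_N;\sigma')$. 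Comparing with $\pi=L(\tau_1,\dots,\tau_l;\sigma)$ and invoking uniqueness of the Langlands data (equivalently, of the associated standard module) yields $\sigma'\cong\sigma$ and an equality of multisets $\{\Delta_1,\dots,\Delta_N\}=\{\tau_1,\dots,\tau_l\}$. Restricting to the $\rho$-line and using the first step gives the asserted identity $\bigtimes_{\rho_i\cong\rho}\tau_i=\bigtimes_{i,j:\,x_1^{(i)}+x_j^{(i)}<0}(\tau^{(i)}_j)^{k_j^{(i)}-k_{j+1}^{(i)}}$; and since $\sigma=\sigma'$ was obtained from $\pi$ purely by iterated applications of Corollary~\ref{minus}, its data $\{M^-_\rho(\sigma)\}_\rho$ are computed from $\{M^-_\rho(\pi)\}_\rho$ exactly as claimed.

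The step I expect to be the main obstacle is justifying that the peeling can be run in non-decreasing order of centers across all cuspidal lines at once: peeling off a segment $\Delta_\rho[x,y]$ with $x+y<0$ must not create, on any other line $\rho'$, a block of center smaller than $x+y$. When $\rho'\notin\{\rho,\rho^\vee\}$ the $\rho'$-line is untouched, since $M^*(\Delta_\rho[x,y])$ has no constituent on it. When $\rho'=\rho^\vee$ one uses Theorem~\ref{tadic} together with the observation that $x+y<0$ forces $y<0$: the $\rho^\vee$-line part of every constituent of $M^*(\Delta_\rho[x,y])$ is a segment with left endpoint $-y>0$, so peeling $\Delta_\rho[x,y]$ only enlarges the non-negative-center part of $M^-_{\rho^\vee}$ and leaves its negative-center blocks intact. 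Carrying out this Jacquet-module bookkeeping carefully --- and, in the same spirit, the check that the representation obtained from $(\tau_1,\dots,\tau_l;\sigma)$ by deleting the $\rho$-segments already has non-negative $\rho$-data --- is the technical heart; the rest is the formal manipulation above.
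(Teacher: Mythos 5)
Your proposal is correct and follows essentially the same route as the paper, whose proof of this theorem is simply the one-line reduction to Theorem~\ref{data} and Corollaries~\ref{minus},~\ref{criterion}; you have filled in the intermediate reasoning accurately. One remark on the part you flag as "the technical heart": the worry about running the peeling in a globally non-decreasing order of centers across \emph{all} cuspidal lines is more than you need. Segments supported on inequivalent supercuspidals always induce irreducibly (Theorem~\ref{zel}), so $\Delta_{\rho}[\,\cdot\,] \times \Delta_{\rho'}[\,\cdot\,]\cong \Delta_{\rho'}[\,\cdot\,] \times \Delta_{\rho}[\,\cdot\,]$ whenever $\rho\not\cong\rho'$, and the standard module is independent of how one interleaves segments from different lines. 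You may therefore peel each cuspidal line to exhaustion separately (each application of Corollary~\ref{minus} only imposes a within-line condition on centers), and then reorder the collected segments by center at the very end to present the result as a standard module; Langlands uniqueness then pins down both $\sigma$ and the multiset of segments exactly as you argue. This makes the Tadi\'c-formula bookkeeping about $\rho^\vee$-constituents of $M^*(\Delta_\rho[x,y])$ unnecessary for the cross-line ordering issue, though that observation (that $x+y<0$ forces $y<0$, so the $\rho^\vee$-contributions sit at positive exponents) is itself correct and does clarify why $M^-_{\rho^\vee}$ of the intermediate representation behaves well. A minor imprecision: after peeling block $i$, the surviving entries $(x_j^{(i)},k_j^{(i)})$ with $j<j_0$ have their $k$-values decreased by $k_{j_0}^{(i)}$, not merely "deleted entries" --- but this does not affect the conclusion that every remaining block has non-negative center.
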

\begin{proof}
This follows from Theorem \ref{data} and Corollaries \ref{minus}, \ref{criterion}. 
\end{proof}
In fact, by Proposition \ref{ML} below,  
the tempered representation $\sigma$ is determined almost completely by $\{M^-_{\rho}(\sigma)\}_\rho$.
\par

Unfortunately, the map $\pi \mapsto M_\rho^\epsilon(\pi)$ is not injective. 
For example, 
when $\pi_0$ is supercuspidal and $\rho \rtimes \pi_0$ is reducible, 
this induced representation is semisimple of length two, i.e., 
$\rho \rtimes \pi_0 = \pi_1 \oplus \pi_2$ and $\pi_1 \not\cong \pi_2$. 
However $M^\epsilon_{\rho}(\pi_1) = M^\epsilon_{\rho}(\pi_2) = \left[ (0,1); \pi_0 \right]$ for $\epsilon \in \{\pm\}$. 
\par

\subsection{Jantzen's algorithm}\label{alg}
Let $\pi = L(\tau_1, \dots, \tau_r; \sigma)$ be an irreducible representation of $G_n$. 
Suppose that $\rho$ is self-dual and $x \in (1/2)\Z$.
We recall Jantzen's algorithm (\cite[\S 3.3]{J3}) 
to compute the highest derivative $D_{\rho|\cdot|^{x}}^{(k)}(\pi)$ with $x>0$. 

\begin{enumerate}
\item
Write 
$\tau_1 \times \dots \times \tau_r \cong \tau_1^{(1)} \times \dots \times \tau_{r_1}^{(1)} \times \Delta_\rho[x-1,-x]^b$
with $b$ maximal. 
Then
\[
\pi \hookrightarrow L(\tau_1^{(1)}, \dots, \tau_{r_1}^{(1)}) \times \Delta_\rho[x-1,-x]^b \rtimes \sigma.
\]

\item
Compute the right highest derivative 
$R_{\rho|\cdot|^{-x}}^{(a)}(L(\tau_1^{(1)}, \dots, \tau_{r_1}^{(1)})) 
= L(\tau_1^{(2)}, \dots, \tau_{r_2}^{(2)})$. 
Then Jantzen's Claim 1 says that
\[
\pi \hookrightarrow 
L(\tau_1^{(2)}, \dots, \tau_{r_2}^{(2)}) \rtimes L((\rho|\cdot|^{-x})^a, \Delta_\rho[x-1,-x]^b, \sigma).
\]

\item
Assume for a moment that we were able to compute the highest derivative 
$D_{\rho|\cdot|^x}^{(k_1)}(L((\rho|\cdot|^{-x})^a, \Delta_\rho[x-1,-x]^b, \sigma)) = \pi_1$. 
Then Jantzen's Claim 2 says that
\[
\pi \hookrightarrow 
L(\tau_1^{(2)}, \dots, \tau_{r_2}^{(2)}, (\rho|\cdot|^x)^{k_1}) \rtimes \pi_1.
\]

\item
Compute the left highest derivative 
$L_{\rho|\cdot|^{x}}^{(k)}(L(\tau_1^{(2)}, \dots, \tau_{r_2}^{(2)}, (\rho|\cdot|^x)^{k_1})) 
= L(\tau_1^{(3)}, \dots, \tau_{r_3}^{(3)})$. 
Then $D_{\rho|\cdot|^x}^{(k)}(\pi)$ is the highest derivative, and 
\[
D_{\rho|\cdot|^x}^{(k)}(\pi) \hookrightarrow L(\tau_1^{(3)}, \dots, \tau_{r_3}^{(3)}) \rtimes \pi_1.
\]

\item
By Theorem \ref{LR}, 
one can write 
$\tau_1^{(3)} \times \dots \times \tau_{r_3}^{(3)} 
\cong \tau_1^{(4)} \times \dots \times \tau_{r_4}^{(4)} \times (\rho|\cdot|^x)^{k_2}$
such that $\tau_i^{(4)}$ has a negative central exponent, and $k_2 \leq k_1$.

\item
There exists a unique irreducible representation $\pi_2$ 
of the form $L((\rho|\cdot|^{-x})^{a'}, \Delta_\rho[x-1,-x]^{b'}; \sigma')$
such that $D_{\rho|\cdot|^x}^{(k_2)}(\pi_2) = \pi_1$ is the highest derivative.
Jantzen's Claim 3 says that
\[
D_{\rho|\cdot|^x}^{(k)}(\pi) \hookrightarrow L(\tau_1^{(4)}, \dots, \tau_{r_4}^{(4)}) \rtimes \pi_2.
\]
Assume for a moment that we could specify $\pi_2$.

\item
There exists a unique irreducible representation $L(\tau_1^{(5)}, \dots, \tau_{r_5}^{(5)})$ such that
\[
R_{\rho|\cdot|^{-x}}^{(a')}(L(\tau_1^{(5)}, \dots, \tau_{r_5}^{(5)})) = L(\tau_1^{(4)}, \dots, \tau_{r_4}^{(4)})
\]
is the highest right derivative.
Moreover, $\tau_i^{(5)}$ has a negative central exponent. 
Then 
\[
D_{\rho|\cdot|^x}^{(k)}(\pi) = L(\tau_1^{(5)}, \dots, \tau_{r_5}^{(5)}, \Delta_\rho[x-1,-x]^{b'}; \sigma').
\]
\end{enumerate}

In conclusion, the computation of the highest derivative $D_{\rho|\cdot|^x}^{(k)}(\pi)$ 
is reduced to the one of $D_{\rho|\cdot|^x}^{(k)}(L((\rho|\cdot|^{-x})^a, \Delta_\rho[x-1,-x]^b, \sigma))$.
Jantzen's strategy of this computation is an induction on $x$. 
Namely, this computation for the case $x \geq 3/2$ can be reduced to the case for $x-1$.
Jantzen also compute the case $x=1/2$ (\cite[Theorem 3.3]{J3}). 
As a consequence, he gave some examples  
to compute the Langlands data for the Aubert duals $\hat\pi$ of certain irreducible representations $\pi$
in the half-integral reducibility case (\cite[\S 4]{J3}).
\par

In this paper, we will treat the general case.
To do this, a key idea is to use certain \textbf{Arthur packets}.

\section{Arthur packets}\label{packets}
In his book \cite{Ar}, for each $A$-parameter $\psi$, 
Arthur defined a finite (multi-)set $\Pi_\psi$ consisting of unitary representations of 
split $\SO_{2n+1}(F)$ or $\Sp_{2n}(F)$. 
In this section, we review his theory. 
\par

\subsection{$A$-parameters}\label{Apara}
A homomorphism 
\[
\psi \colon W_F \times \SL_2(\C) \times \SL_2(\C) \rightarrow \GL_n(\C)
\]
is called an \textbf{$A$-parameter for $\GL_n(F)$} 
if 
\begin{itemize}
\item
$\psi(\Frob) \in \GL_n(\C)$ is semisimple and all its eigenvalues have absolute value $1$; 
\item
$\psi|W_F$ is smooth, i.e., has an open kernel; 
\item
$\psi|\SL_2(\C) \times \SL_2(\C)$ is algebraic.
\end{itemize}
The local Langlands correspondence for $\GL_n(F)$ asserts that 
there is a canonical bijection between
the set of irreducible unitary supercuspidal representations of $\GL_n(F)$
and 
the set of irreducible representations of $W_F$ of bounded images. 
We identify these two sets, and use the symbol $\rho$ for their elements. 
\par

Any irreducible representation of $W_F \times \SL_2(\C) \times \SL_2(\C)$ 
is of the form $\rho \boxtimes S_a \boxtimes S_b$, 
where $S_a$ is the unique irreducible representation of $\SL_2(\C)$ of dimension $a$.
We shortly write $\rho \boxtimes S_a = \rho \boxtimes S_a \boxtimes S_1$ 
and $\rho = \rho \boxtimes S_1 \boxtimes S_1$. 
For an $A$-parameter $\psi$, 
the multiplicity of $\rho \boxtimes S_a \boxtimes S_b$ in $\psi$
is denoted by $m_\psi(\rho \boxtimes S_a \boxtimes S_b)$.
When an $A$-parameter $\psi$ is decomposed into a direct sum
\[
\psi = \bigoplus_i \rho_i \boxtimes S_{a_i} \boxtimes S_{b_i}, 
\]
we define $\tau_\psi$ by the product of Speh representations
\[
\tau_\psi = \bigtimes_i 
L\left( 
\Delta_{\rho_i}\left[ \half{a_i-b_i}, -\half{a_i+b_i}+1 \right], \dots, \Delta_{\rho_i}\left[\half{a_i+b_i}-1, -\half{a_i-b_i} \right]
\right).
\]
\par

We say that an $A$-parameter $\psi \colon W_F \times \SL_2(\C) \times \SL_2(\C) \rightarrow \GL_k(\C)$ is
\textbf{symplectic} or \textbf{of symplectic type} (\resp \textbf{orthogonal} or \textbf{of orthogonal type})
if the image of $\psi$ is in $\Sp_k(\C)$ (so that $k$ is even) (\resp in $\mathrm{O}_k(\C)$). 
We call $\psi$ an \textbf{$A$-parameter for $\SO_{2n+1}(F)$} 
if it is an $A$-parameter for $\GL_{2n}(F)$ of symplectic type, i.e., 
\[
\psi \colon W_F \times \SL_2(\C) \times \SL_2(\C) \rightarrow \Sp_{2n}(\C). 
\]
Similarly, 
$\psi$ is called an \textbf{$A$-parameter for $\Sp_{2n}(F)$} 
if it is an $A$-parameter for $\GL_{2n+1}(F)$ of orthogonal type with the trivial determinant, i.e.,
\[
\psi \colon W_F \times \SL_2(\C) \times \SL_2(\C) \rightarrow \SO_{2n+1}(\C). 
\]
For $G_n = \SO_{2n+1}(F)$ (\resp $G_n = \Sp_{2n}(F)$), 
we let $\Psi(G_n)$ be the set of $\widehat{G_n}$-conjugacy classes of $A$-parameters for $G_n$, 
where $\widehat{G_n} = \Sp_{2n}(\C)$ (\resp $\widehat{G_n} = \SO_{2n+1}(\C)$).
We say that 
\begin{itemize}
\item
$\psi \in \Psi(G_n)$ is \textbf{tempered} 
if the restriction of $\psi$ to the second $\SL_2(\C)$ is trivial; 
\item
$\psi \in \Psi(G_n)$ is \textbf{of good parity} 
if $\psi$ is a sum of irreducible self-dual representations of the same type as $\psi$; 
\end{itemize}
We denote by $\Psi_\temp(G_n) = \Phi_\temp(G_n)$ (\resp $\Psi_\gp(G_n)$)
the subset of $\Psi(G)$ consisting of tempered $A$-parameters 
(\resp $A$-parameters of good parity).
Also, we put $\Phi_\gp(G_n) = \Phi_\temp(G_n) \cap \Psi_\gp(G_n)$. 
Set $\Psi_*(G) = \cup_{n \geq 0}\Psi_*(G_n)$ and $\Phi_*(G) = \cup_{n \geq 0}\Phi_*(G_n)$
for $* \in \{\emptyset, \temp. \gp\}$. 
\par

For $\psi \in \Psi(G_n)$, 
\textbf{the component group} is defined by $\Sc_\psi = \pi_0(\Cent_{\widehat{G_n}}(\im(\psi))/Z(\widehat{G_n}))$. 
This is an elementary two abelian group. 
It can be described as follows. 
Let $\psi \in \Psi(G)$. 
For simplicity, we assume that $\psi$ is of good parity. 
Hence we can decompose $\psi = \oplus_{i=1}^{t} \psi_i$, 
where $\psi_i$ is an irreducible representation (which is self-dual of the same type as $\psi$). 
We define an \textbf{enhanced component group} $\AA_\psi$ as 
\[
\AA_\psi = \bigoplus_{i=1}^t (\Z/2\Z)\alpha_{\psi_i}. 
\]
Namely, $\AA_\psi$ is a free $\Z/2\Z$-module of rank $t$
with a basis $\{\alpha_{\psi_i}\}$ associated with the irreducible components $\{\psi_i\}$. 
Then there exists a canonical surjection 
\[
\AA_\psi \twoheadrightarrow \Sc_\psi
\]
whose kernel is generated by the elements 
\begin{itemize}
\item
$z_\psi = \sum_{i=1}^t \alpha_{\psi_i}$; and
\item
$\alpha_{\psi_i} + \alpha_{\psi_{i'}}$ such that $\psi_i \cong \psi_{i'}$.  
\end{itemize}
Let $\widehat{\Sc_\psi}$ and $\widehat{\AA_\psi}$ be the Pontryagin duals of $\Sc_\psi$ and $\AA_\psi$, 
respectively. 
Via the surjection $\AA_\psi \twoheadrightarrow \Sc_\psi$, 
we may regard $\widehat{\Sc_\psi}$ as a subgroup of $\widehat{\AA_\psi}$. 
For $\eta \in \widehat{\AA_\psi}$, we write $\eta(\alpha_{\psi_i}) = \eta(\psi_i)$. 
By convention, we understand that $m_\psi(\rho \boxtimes S_0) = 1$ and $\eta(\rho \boxtimes S_0) = 1$.
\par

Let $\psi, \psi' \in \Psi_\gp(G)$. 
When there is a canonical inclusion $\AA_{\psi'} \hookrightarrow \AA_\psi$, 
for $\eta \in \widehat{\AA_\psi}$, we denote its restriction by $\eta' \in \widehat{\AA_{\psi'}}$.
For example, when $\psi' = \psi - \psi_0 + \psi_0'$ with $\psi_0$, $\psi_0'$ being irreducible, 
then we set $\eta'(\psi_0') = \eta(\psi_0)$.

\par

Let $\Irr_\unit(G_n)$ (\resp $\Irr_\temp(G_n)$) 
be the set of equivalence classes of irreducible unitary (\resp tempered) representations of $G_n$.
The local main theorem of Arthur's book is as follows. 
\begin{thm}[{\cite[Theorem 2.2.1, Proposition 7.4.1]{Ar}}]\label{arthur}
Let $G_n$ be a split $\SO_{2n+1}(F)$ or $\Sp_{2n}(F)$.
\begin{enumerate}
\item
For each $\psi \in \Psi(G_n)$, 
there is a finite multi-set $\Pi_\psi$ over $\Irr_\unit(G_n)$ with a map
\[
\Pi_\psi \rightarrow \widehat{\Sc_\psi},\; 
\pi \mapsto \pair{\cdot, \pi}_\psi
\]
satisfying certain (twisted and standard) endoscopic character identities. 
We call $\Pi_\psi$ the \textbf{$A$-packet} for $G_n$ associated with $\psi$. 

\item
When $\psi = \phi \in \Phi_\temp(G_n)$, 
the $A$-packet $\Pi_\phi$ is in fact a subset of $\Irr_\temp(G_n)$. 
Moreover, the map $\Pi_\phi \ni \pi \mapsto \pair{\cdot, \pi}_\phi \in \widehat{\Sc_\phi}$ is bijective, 
$\Pi_\phi \cap \Pi_{\phi'} = \emptyset$ for $\phi \not\cong \phi'$, 
and 
\[
\Irr_\temp(G_n) = \bigsqcup_{\phi \in \Phi_\temp(G_n)} \Pi_\phi. 
\]
When $\pi \in \Pi_\phi$ with $\eta = \pair{\cdot, \pi}_\phi \in \widehat{\Sc_\phi}$, 
we write $\pi = \pi(\phi, \eta)$.

\item
If $\psi = \oplus_{i}\rho_i \boxtimes S_{a_i} \boxtimes S_{b_i}$, 
set 
\[
\phi_{\psi, 0} = \bigoplus_{\substack{i \\ b_i \equiv 1 \bmod 2}} \rho_i \boxtimes S_{a_i}. 
\]
Then for any $\sigma \in \Pi_{\phi_{\psi,0}}$, 
the unique irreducible subrepresentation of 
\begin{align*}
&\left(
\bigtimes_{\substack{i \\ b_i \equiv 1 \bmod 2, b_i \not=1}} 
L\left(\Delta_{\rho_i}\left[\half{a_i-b_i}, -\half{a_i+b_i}+1\right], \dots, \Delta_{\rho_i}\left[\half{a_i-3}, -\half{a_i-1}\right]\right)
\right)
\\&\times 
\left(
\bigtimes_{\substack{i \\ b_i \equiv 0 \bmod 2}} 
L\left(\Delta_{\rho_i}\left[\half{a_i-b_i}, -\half{a_i+b_i}+1\right], \dots, \Delta_{\rho_i}\left[\half{a_i}-1, -\half{a_i}\right]\right)
\right)
\rtimes \sigma
\end{align*}
belongs to $\Pi_\psi$.
\end{enumerate}
\end{thm}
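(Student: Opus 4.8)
The plan is to deduce all three assertions from Arthur's results in \cite{Ar}; what remains is to match his formulation with the statements above, and only part (3) requires a genuine, if routine, verification.

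For (1), I would simply invoke \cite[Theorem 2.2.1]{Ar}: it attaches to each $\psi\in\Psi(G_n)$ a finite multi-set $\Pi_\psi$ over $\Irr_\unit(G_n)$ together with a pairing $\pi\mapsto\pair{\cdot,\pi}_\psi$ with $\widehat{\Sc_\psi}$ characterized by the twisted and ordinary endoscopic character identities, so there is nothing to add. That the members are unitary is part of the construction: each occurs as a Langlands constituent of a standard module parabolically induced from a tempered representation twisted by the Arthur $\SL_2(\C)$, hence is of Speh type and unitary.

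For (2), note that when $\psi=\phi\in\Phi_\temp(G_n)$ the second $\SL_2(\C)$ acts trivially, so $\phi$ is an ordinary bounded $L$-parameter and, by construction, $\Pi_\psi$ reduces to the tempered $L$-packet $\Pi_\phi$. The remaining claims — that $\Pi_\phi$ consists of irreducible tempered representations, that $\pi\mapsto\pair{\cdot,\pi}_\phi$ is a bijection onto $\widehat{\Sc_\phi}$, that distinct tempered packets are disjoint, and that they exhaust $\Irr_\temp(G_n)$ — are the content of the local (tempered) classification in \cite[Theorem 2.2.1]{Ar}, which I would cite directly.

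For (3), write $\psi=\bigoplus_i\rho_i\boxtimes S_{a_i}\boxtimes S_{b_i}$. Arthur constructs $\Pi_\psi$ for a general $\psi$ from packets of parameters with a smaller second-$\SL_2(\C)$ part by parabolic induction, and \cite[Proposition 7.4.1]{Ar} records the outcome: for $\sigma\in\Pi_{\phi_{\psi,0}}$ the unique irreducible (Langlands) subrepresentation of the displayed induced representation lies in $\Pi_\psi$. Concretely I would peel off, for each $i$ with $b_i$ even, all the half-blocks of $\rho_i\boxtimes S_{a_i}\boxtimes S_{b_i}$, and for each $i$ with $b_i$ odd and $b_i\neq1$, peel $\rho_i\boxtimes S_{a_i}\boxtimes S_{b_i}$ down to the summand $\rho_i\boxtimes S_{a_i}$ that survives in $\phi_{\psi,0}$, and check that composing these steps reassembles exactly the stated products of Speh representations $L(\Delta_{\rho_i}[\half{a_i-b_i},-\half{a_i+b_i}+1],\dots)$ in the stated order, each intermediate induced representation having a unique irreducible subrepresentation. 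The one delicate point — the main obstacle — is the combinatorial bookkeeping here: one must check that the various inducing Speh representations commute past one another (their cuspidal supports being either disjoint or linked in a way controlled by Zelevinsky's Theorem \ref{zel}), so that the order of peeling is immaterial and the Langlands subrepresentation is unambiguous; granting this, the statement follows from \cite{Ar}.
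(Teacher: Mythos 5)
The paper states Theorem \ref{arthur} purely as a citation of Arthur's book and offers no proof of its own; your proposal, which amounts to citing the same results of Arthur together with a plausible sketch of the bookkeeping behind part (3), is consistent with that treatment. One small caveat: the claim that members of $\Pi_\psi$ are unitary because they are ``Langlands constituents of Speh-twisted standard modules'' is not a valid heuristic (irreducible constituents of standard modules are generally not unitary; the unitarity of $A$-packet members is a deep global fact), but since unitarity is itself part of what the citation to Arthur provides, this does not affect the argument.
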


\begin{rem}
\begin{enumerate}
\item
The map $\Pi_\psi \ni \pi \mapsto \pair{\cdot, \pi}_\psi \in \widehat{\Sc_\psi}$ 
is not canonical when $G = \Sp_{2n}(F)$. 
To specify this, we implicitly fix an $F^{\times2}$-orbit of non-trivial additive characters of $F$ through this paper.

\item
In general, the map $\Pi_\psi \ni \pi \mapsto \pair{\cdot, \pi}_\psi \in \widehat{\Sc_\psi}$ 
is neither injective nor surjective. 

\item
In general, $\Pi_\psi$ can intersect with $\Pi_{\psi'}$ even when $\psi \not\cong \psi'$.
However, \cite[4.2 Corollaire]{MoeC} says that
if $\Pi_\psi \cap \Pi_{\psi'} \not= \emptyset$, then $\psi_d \cong \psi'_d$, 
where $\psi_d = \psi \circ \Delta$ is the diagonal restriction of $\psi$, 
i.e., $\Delta \colon W_F \times \SL_2(\C) \rightarrow W_F \times \SL_2(\C) \times \SL_2(\C)$ 
is defined by $\Delta(w,g) = (w,g,g)$. 

\end{enumerate}
\end{rem}

The following is a deep result of M{\oe}glin. 
\begin{thm}[M{\oe}glin \cite{Moe3}]\label{mult1}
The $A$-packet $\Pi_\psi$ is multiplicity-free, 
i.e., it is a subset of $\Irr_\unit(G)$. 
\end{thm}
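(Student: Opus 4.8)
The plan is to follow M{\oe}glin's approach \cite{Moe3}: first reduce to $A$-parameters of good parity, then use her explicit construction of $\Pi_\psi$, and finally show that its members are pairwise non-isomorphic by a Jacquet-module analysis of the kind developed in \S\ref{s.data}. For the reduction, decompose $\psi = \psi_{\mathrm{gp}} \oplus \psi'$, where $\psi_{\mathrm{gp}} \in \Psi_\gp(G)$ collects the self-dual summands of $\psi$ of the same type as $\psi$ and the remaining summands are grouped with their duals into a $\GL$-parameter. By Theorem \ref{arthur}(3) (treating the blocks with $b_i$ even and those with $b_i$ odd separately) and the definition of $\Pi_\psi$ in the non-good-parity case, every $\pi \in \Pi_\psi$ is the unique irreducible subrepresentation of $\tau \rtimes \sigma$ for a \emph{fixed} product $\tau$ of Speh/ladder representations built from $\psi'$ (and the even-$b$ blocks) and a varying $\sigma$ lying in an $A$-packet of a smaller group. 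The relevant inductions $\tau \rtimes \sigma$ being irreducible --- which follows from Zelevinsky's criterion (Theorem \ref{zel}) on the $\GL$-side together with the classical-group irreducibility results proven alongside these packets --- the assignment $\sigma \mapsto \tau \rtimes \sigma$ is injective on isomorphism classes (one recovers $\sigma$ from a Jacquet module), so it suffices to treat $\psi \in \Psi_\gp(G)$.

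\textbf{M{\oe}glin's construction.}
For $\psi = \bigoplus_i \rho_i \boxtimes S_{a_i} \boxtimes S_{b_i} \in \Psi_\gp(G)$, fix an ordering of the Jordan blocks satisfying M{\oe}glin's dominance condition. To each admissible tuple $(\underline{\ell},\eta)$ --- one integer $\ell_i$ and one sign $\eta_i$ per block, subject to explicit inequalities --- one attaches a representation $\pi(\psi,\underline{\ell},\eta)$ built from the tempered representation $\pi(\phi_{\psi,0},\eta_0)$ of Theorem \ref{arthur}(2) by alternately applying parabolic inductions by Steinberg representations $\Delta_{\rho_i}[\cdot,\cdot]$ and partial Jacquet functors $\Jac_{\rho_i|\cdot|^x}$ over prescribed ranges of $x$, processing the blocks in the fixed order (here $\eta_0$ is determined by $\eta$). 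Each $\pi(\psi,\underline{\ell},\eta)$ is irreducible or zero, and as $(\underline{\ell},\eta)$ varies these exhaust $\Pi_\psi$ as a multi-set, with $\pair{\cdot,\pi(\psi,\underline{\ell},\eta)}_\psi$ a prescribed function of $\eta$. A first lemma, proved by commuting $\Jac$ operators past parabolic induction via Theorems \ref{tadic} and \ref{zel}, shows that $\pi(\psi,\underline{\ell},\eta)$ does not depend on the chosen admissible ordering; this is what makes the construction well defined.

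\textbf{Injectivity.}
It then remains to prove that $(\underline{\ell},\eta)\mapsto\pi(\psi,\underline{\ell},\eta)$ is injective on the locus where it is nonzero, and I would argue by induction on $\dim\psi$ using the $\rho$-data machinery of \S\ref{s.data}. Let $(\rho,a,b)$ be the block placed last in the ordering; the part of the construction attached to it contributes the outermost Steinberg factors and the outermost $\Jac$-reductions, so by Theorem \ref{data} its data $(\ell,\eta)$ is visible from $M^-_\rho(\pi)$ together with the tempered base, and peeling off the corresponding $\Delta_\rho[\cdot,\cdot]$ factors via Corollary \ref{minus} produces a nonzero member of $\Pi_{\psi'}$ for the strictly smaller parameter $\psi'$ obtained by deleting or shrinking this block; the inductive hypothesis then pins down the rest of $(\underline{\ell},\eta)$. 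The base case $\psi=\phi$ tempered is Theorem \ref{arthur}(2), which gives a bijection $\Pi_\phi\to\widehat{\Sc_\phi}$, so $\Pi_\phi$ is a genuine set.

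\textbf{The main obstacle.}
The hard part is that $M^-_\rho$ does not by itself separate all members of $\Pi_\psi$: as observed after Theorem \ref{vs}, $\pi\mapsto M^\epsilon_\rho(\pi)$ is not injective --- a tempered constituent can split off as a length-two induced representation --- so two tuples $(\underline{\ell},\eta)$ and $(\underline{\ell},\eta')$ with the same $\underline{\ell}$ may produce representations with identical cuspidal support and Jacquet modules, and these must be distinguished through the characters $\pair{\cdot,\pi}_\psi$, i.e. through the endoscopic character identities of Theorem \ref{arthur}(1) and the stability of $\sum_{\pi\in\Pi_\psi}\pair{\cdot,\pi}_\psi\,\pi$. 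Concretely, after the Jacquet reductions of the previous step one lands in a tempered packet, where the characters are linearly independent by Theorem \ref{arthur}(2), so one must track the character faithfully through all the $\Jac$ and induction operations, together with a precise determination of exactly when $\pi(\psi,\underline{\ell},\eta)=0$ so that no collision between nonzero members is hidden among the vanishing ones. This bookkeeping, rather than any single conceptual step, is where the real work lies.
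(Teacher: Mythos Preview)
The paper does not prove Theorem~\ref{mult1}; it is quoted as ``a deep result of M{\oe}glin'' with a bare citation to \cite{Moe3}, and the surrounding construction (Theorem~\ref{moe}) is likewise only reviewed, with proofs deferred to \cite{X2}. So there is no in-paper argument to compare against; what follows compares your sketch to M{\oe}glin's actual strategy as summarized in Theorem~\ref{moe}.

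Your outline has the right global shape --- reduce to good parity, invoke M{\oe}glin's explicit parametrization, then prove injectivity of $(\ub{l},\ub{\eta},\eta_0)\mapsto\pi(\psi,\ub{l},\ub{\eta},\eta_0)$ on the nonzero locus --- but the injectivity step contains a genuine gap, which you in fact name yourself. You propose to recover $(\ub{l},\ub{\eta},\eta_0)$ from the $\rho$-data $M^-_\rho(\pi)$ of \S\ref{s.data} and then peel off blocks via Corollary~\ref{minus}. That apparatus recovers Langlands data, not the sign parameters: as noted right after Theorem~\ref{vs}, $\pi\mapsto M^\epsilon_\rho(\pi)$ is not injective, and your ``main obstacle'' paragraph concedes that the signs $\eta_i$ must ultimately be separated by endoscopic character identities, without carrying this out. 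Since that separation is exactly the content of the theorem, the proposal stops at the point where the work begins.

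M{\oe}glin's route is different from the one you sketch and does not pass through the generic $\rho$-data of \S\ref{s.data}. She deforms $\psi$ to a parameter $\psi_>$ with separated blocks (the situation of Theorem~\ref{moe}(3),(4)): there each nonzero $\pi(\psi_>,\ub{l},\ub{\eta},\eta_0)$ is the unique irreducible subrepresentation of an explicit induced representation whose tempered core $\pi(\phi,\eta)$ is a prescribed, injective function of $(\ub{l},\ub{\eta},\eta_0)$, so injectivity for $\psi_>$ follows from the Langlands classification and Theorem~\ref{arthur}(2). The hard step is then to show that the partial Jacquet operators taking $\psi_>$ back to $\psi$ (Theorem~\ref{moe}(4)) do not identify distinct nonzero representations, which requires a block-by-block Jacquet-module analysis tailored to these specific induced representations together with a precise determination of the vanishing locus. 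Your appeal to Theorem~\ref{arthur}(3) for the reduction to good parity is also off: that statement exhibits particular members of $\Pi_\psi$, not the full packet, so it does not by itself give the bijection $\sigma\mapsto\tau\rtimes\sigma$ you need.
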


Xu proved the following key lemma, whose proof uses the theory of endoscopy.
\begin{lem}[Xu {\cite[Proposition 8.3 (ii)]{X2}}]\label{xu}
Let $\psi = \oplus_{i \in I} \rho_i \boxtimes S_{a_i} \boxtimes S_{b_i} \in \Psi_\gp(G)$. 
For fixed $x \in \R$, if $D_{\rho|\cdot|^x}^{(k)}(\pi) \not= 0$ for some $\pi \in \Pi_\psi$, 
then
\[
k \leq \#\{i \in I \;|\; \rho_i \cong \rho,\; x = (a_i-b_i)/2\}.
\]
\end{lem}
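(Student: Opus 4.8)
The plan is to reduce the bound to a statement about the Jacquet module of the standard module attached to $\psi$, using the fact that every $\pi \in \Pi_\psi$ is a subquotient of the standard module $\tau_\psi \rtimes \sigma$ for a suitable tempered $\sigma \in \Pi_{\phi_{\psi,0}}$. More precisely, by Theorem \ref{arthur}(3) together with M{\oe}glin's parametrization, each $\pi \in \Pi_\psi$ embeds into an induced representation $\bigtimes_i \mathrm{Speh}_i \rtimes \sigma$, where $\mathrm{Speh}_i$ is the Speh representation built from $\rho_i \boxtimes S_{a_i} \boxtimes S_{b_i}$ with central exponent $(a_i - b_i)/2$ on the left. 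The key observation is that $D_{\rho|\cdot|^x}^{(k)}(\pi) \neq 0$ forces, via exactness of the Jacquet functor and Frobenius reciprocity, a nonzero contribution of $(\rho|\cdot|^x)^{\otimes k}$ in $\semi\Jac_{P_{dk}}$ of this big induced module. First I would expand $\mu^*$ of the induced module via Tadi{\'c}'s formula (Theorem \ref{tadic}), $M^*(\tau) = (m \otimes \id) \circ (\vee \otimes m^*) \circ s \circ m^*$, applied to each $\mathrm{Speh}_i$, and track which summands can supply a factor $\rho|\cdot|^x$ in the $\GL$-part.

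The decisive point is a bound on how many copies of $\rho|\cdot|^x$ can be extracted (by iterated $\Jac_{\rho|\cdot|^x}$) from a single Speh representation $L(\Delta_{\rho_i}[\tfrac{a_i-b_i}{2},-\tfrac{a_i+b_i}{2}+1],\dots,\Delta_{\rho_i}[\tfrac{a_i+b_i}{2}-1,-\tfrac{a_i-b_i}{2}])$ and from $\sigma$. For $\sigma$ tempered of good parity, Casselman's criterion (Corollary \ref{criterion}) forces $\Jac_{\rho|\cdot|^x}(\sigma) = 0$ whenever $x > 0$; and for $x \le 0$ one argues from the tempered support that a positive exponent $\rho|\cdot|^x$ with $x = (a_i-b_i)/2 \le 0$ can only come from components with $b_i$ odd, where it is already counted. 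For the Speh representation, I would invoke Theorem \ref{LR} (the Jantzen--M{\'i}nguez formula for highest derivatives of $\GL$-representations): the multiset of segments of $\mathrm{Speh}_i$ is a ``rectangle'' of segments with initial points $\tfrac{a_i-b_i}{2}, \tfrac{a_i-b_i}{2}+1, \dots, \tfrac{a_i+b_i}{2}-1$, and the combinatorial quantity $k = \max_j\{n_x(j) - n_{x-1}(j)\}$ controlling the highest $\rho|\cdot|^x$-derivative equals $1$ precisely when $x = (a_i - b_i)/2$ (so $x$ is a left endpoint of exactly one segment in the rectangle and $x-1$ is not), and $0$ otherwise. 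Summing over $i$, the total number of $\rho|\cdot|^x$'s available in $\tau_\psi$ is exactly $\#\{i : \rho_i \cong \rho,\ x = (a_i-b_i)/2\}$, which is the claimed bound.

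The main obstacle is the bookkeeping in the Tadi{\'c} formula: a priori $\mu^*$ of the product $\bigtimes_i \mathrm{Speh}_i \rtimes \sigma$ mixes contributions across different factors, and one must rule out that cancellations in the Grothendieck group conspire to produce an ``extra'' $\rho|\cdot|^x$ beyond the naive count. Here one should use that $\Jac_{\rho|\cdot|^x}^{(k)}(\pi) \neq 0$ for the \emph{irreducible} $\pi$, so it suffices to bound the largest $k$ with $(\rho|\cdot|^x)^k \otimes (-)$ appearing in $\semi\Jac_{P_{dk}}$ of any single irreducible constituent — and then the multiplicativity of $m^*$ as a ring homomorphism on $\RR$ (the Hopf algebra structure) together with the fact that each $M^*(\mathrm{Speh}_i)$ contributes a segment-datum $\rho|\cdot|^x$ at most once for the given $x$, by the endpoint analysis above. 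Xu's actual proof uses endoscopy to get the sharp statement; the argument sketched here is the ``soft'' combinatorial upper bound, which is all that is needed in the applications below, and I would present it in that form, citing \cite[Proposition 8.3(ii)]{X2} for the precise statement.
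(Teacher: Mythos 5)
The paper does not prove this lemma: it is quoted verbatim from Xu \cite[Proposition 8.3 (ii)]{X2}, and the text explicitly notes that Xu's proof ``uses the theory of endoscopy.'' Your proposal therefore cannot be compared against a proof in the paper, but it can be checked on its own, and it has a genuine gap. The central claim that each factor contributes a $\rho|\cdot|^x$ ``at most once'' via an endpoint analysis of segments is false because Tadi{\'c}'s $M^* = (m \otimes \id) \circ (\vee \otimes m^*) \circ s \circ m^*$ also feeds the \emph{dualized right} Jacquet pieces into the $\GL$-factor. For a Speh representation with $b$ segments and minimal left endpoint $x = (a-b)/2$, the maximal right endpoint is $-x$, so both $L_{\rho|\cdot|^x}$ and $R_{\rho|\cdot|^{-x}}$ are nonzero and $(\rho|\cdot|^x)^2$ genuinely appears in $M^*$ of a single Speh factor; this gives at best a factor-of-two bound, not Xu's sharp one. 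Note also that Theorem \ref{arthur}(3) induces from a ``half-Speh'' with $\lfloor b_i/2 \rfloor$ segments (not the $b_i$-segment rectangle $\tau_\psi$ you describe), and even then the $i$ with $b_i \geq 3$ odd and $a_i = 2x+1$ produce a dualized-right-derivative contribution $\rho|\cdot|^x$ that is not counted by $\{i : x = (a_i - b_i)/2\}$. Finally, the assertion that Casselman's criterion forces $\Jac_{\rho|\cdot|^x}(\sigma) = 0$ for tempered $\sigma$ and $x > 0$ is simply wrong: Proposition \ref{error} computes nonzero highest derivatives of tempered representations at $x > 0$.

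Your closing remark that the weaker soft bound ``is all that is needed in the applications below'' is also incorrect. The sharp constant is used essentially: in the proof of Proposition \ref{irr1} the bound $k \leq m_3 - \delta_3$ produces the contradictions, and Example \ref{ex.xu1} and Proposition \ref{ex.xu2} give the precise highest-derivative levels that drive the proof of Theorem \ref{main} in \S\ref{s.a=0}. A factor-of-two-weaker estimate would not yield these conclusions. If you want to avoid endoscopy, you cannot replace Lemma \ref{xu} by a Tadi{\'c}-formula upper bound on the induced module; you would need to control the Jacquet module of the \emph{specific} irreducible member of the $A$-packet, which is precisely the extra information Xu's endoscopic argument supplies.
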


The following is the first observation.
\begin{ex}\label{ex.xu1}
Let $\phi \in \Phi_\gp(G)$ and $\eta \in \widehat{\Sc_\phi}$. 
Fix $x \in (1/2)\Z$ with $x>0$. 
Suppose that $\rho \boxtimes S_{2x+1}$ is self-dual of the same type as $\phi$.
\begin{enumerate}
\item
Consider
\[
\pi = L(\Delta_\rho[x-1,-x]^b; \pi(\phi, \eta)).
\]
By Theorem \ref{arthur} (3), we have $\pi \in \Pi_\psi$ with 
\[
\psi = \phi + (\rho \boxtimes S_{2x} \boxtimes S_2)^b.
\]
In particular, by Lemma \ref{xu}, 
if $D_{\rho|\cdot|^x}^{(k)}(\pi) \not=0$, then $k \leq m_\phi(\rho \boxtimes S_{2x+1})$.

\item
Assume that $x=1$, and consider
\[
\pi = L((\rho|\cdot|^{-1})^a, \Delta_\rho[0,-1]^b; \pi(\phi, \eta)). 
\]
By Theorem \ref{arthur} (3), if $a \leq m_\phi(\rho)$, then $\pi \in \Pi_\psi$ with 
\[
\psi = \phi - \rho^a + (\rho \boxtimes S_1 \boxtimes S_3)^a + (\rho \boxtimes S_2 \boxtimes S_2)^b. 
\]
In particular, by Lemma \ref{xu}, 
if $D_{\rho|\cdot|^1}^{(k)}(\pi) \not=0$, then $k \leq m_\phi(\rho \boxtimes S_3)$.
\end{enumerate}
\end{ex}

\subsection{Highest derivatives of tempered representations}
In \cite{J1, J2, J3}, Jantzen studied the derivatives of irreducible representations of $G_n$. 
To do this, he used the extended M{\oe}glin--Tadi{\'c} classification, 
which characterizes irreducible tempered representations 
by their cuspidal supports and the behavior of Jacquet modules. 
See \cite{J1}. 
Since the behavior of Jacquet modules of irreducible tempered representations 
are known by the previous paper \cite{At}, 
one can easily translate Jantzen's results in terms of the local Langlands correspondence 
(Theorem \ref{arthur} (2)).
\par

The highest derivatives of tempered representations are given as follows.

\begin{prop}[{\cite[Theorem 3.1]{J2}}]\label{error}
Let $\phi \in \Phi_\gp(G)$ and $\eta \in \widehat{\Sc_\phi}$. 
Fix a positive half-integer $x \in (1/2)\Z$. 
Suppose that $\rho \boxtimes S_{2x+1}$ is self-dual of the same type as $\phi$.
Denote $m = m_\phi(\rho \boxtimes S_{2x+1}) \geq 0$ by the multiplicity of $\rho \boxtimes S_{2x+1}$ in $\phi$. 

\begin{enumerate}
\item
When $x > 0$, we have 
\[
D_{\rho|\cdot|^x}^{(m)}(\pi(\phi, \eta)) = 
\pi\left(\phi - (\rho \boxtimes S_{2x+1})^{m} + (\rho \boxtimes S_{2x-1})^{m}, \eta \right). 
\]
It is the highest derivative if it is nonzero. 
Moreover, $D_{\rho|\cdot|^x}^{(m)}(\pi(\phi, \eta)) = 0$ if and only if 
\begin{itemize}
\item
$m > 0$; 
\item
$\rho \boxtimes S_{2x+1}$ is self-dual of the same type as $\phi$; 
\item
$\phi \supset \rho \boxtimes S_{2x-1}$ and  $\eta(\rho \boxtimes S_{2x+1}) \not= \eta(\rho \boxtimes S_{2x-1})$, 
\end{itemize}
where we understand that $\phi \supset \rho \boxtimes S_{0}$ and $\eta(\rho \boxtimes S_{0}) = 1$
when $\rho$ is self-dual of the opposite type to $\phi$. 

\item
Suppose that $m > 0$, $x > 0$ and $D_{\rho|\cdot|^x}^{(m)}(\pi(\phi, \eta)) = 0$. 
If $m$ is odd, then 
\[
D_{\rho|\cdot|^x}^{(m-1)}(\pi(\phi, \eta)) = 
\pi\left(\phi - (\rho \boxtimes S_{2x+1})^{m-1} + (\rho \boxtimes S_{2x-1})^{m-1}, \eta \right). 
\]
If $m$ is even, $D_{\rho|\cdot|^x}^{(m-1)}(\pi(\phi, \eta))$ is equal to
\[
L\left(
\Delta_\rho[x-1,x]; 
\pi\left(\phi -(\rho \boxtimes S_{2x+1})^{m} + (\rho \boxtimes S_{2x-1})^{m-2}, \eta\right) 
\right).
\]
In particular, in both cases, 
$D_{\rho|\cdot|^x}^{(m-1)}(\pi(\phi, \eta))$ is irreducible and the highest derivative. 
Moreover, $D_{\rho|\cdot|^x}^{(m-1)}(\pi(\phi, \eta))$ is tempered if and only if $m$ is odd.

\item
When $x = 0$, set $k = [m/2]$ to be the largest integer which is not greater than $m/2$. 
Then 
\[
D_\rho^{(k)}(\pi(\phi, \eta)) = c_k \cdot \pi\left(\phi -\rho^{2k}, \eta\right)
\]
with
\[
c_k = \left\{
\begin{aligned}
&2^{k-1} \iif \text{$\rho$ is of the same type as $\phi$ and $m$ is even}, \\
&2^k \other. 
\end{aligned}
\right.
\]
This is the highest derivative. 
\end{enumerate}
\end{prop}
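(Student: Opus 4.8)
The approach I would take is to reduce Proposition \ref{error} to Jantzen's computation \cite[Theorem 3.1]{J2}. There the highest $\rho|\cdot|^x$-derivative of an irreducible tempered representation of $G_n$ is described in the language of the extended M{\oe}glin--Tadi{\'c} classification, which encodes such a representation by its cuspidal support together with the behaviour of its Jacquet modules along maximal parabolic subgroups. The first step is therefore to fix the dictionary between these M{\oe}glin--Tadi{\'c} data and the pairs $(\phi,\eta)$ with $\phi\in\Phi_\gp(G)$ and $\eta\in\widehat{\Sc_\phi}$ of Theorem \ref{arthur} (2); this dictionary, and the explicit form in terms of the local Langlands correspondence of $\semi\Jac_{P_d}$ on tempered representations that it rests on, are supplied by \cite{At}. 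Under it the multiplicity $m_\phi(\rho\boxtimes S_{2x+1})$ is read off from the cuspidal support, and the partial Jacquet module $\Jac_{\rho|\cdot|^x}(\pi(\phi,\eta))$ is the one computed there: it is zero, or else every constituent of it is the single irreducible representation obtained from $\pi(\phi,\eta)$ by replacing one copy of $\rho\boxtimes S_{2x+1}$ in $\phi$ by $\rho\boxtimes S_{2x-1}$ and transporting $\eta$ along the evident identification of component groups (with the conventions of the statement in the boundary case $x=1/2$, where $\rho\boxtimes S_{2x-1}=\rho\boxtimes S_0$ is simply deleted), and it vanishes exactly when $\phi\supset\rho\boxtimes S_{2x-1}$ and $\eta(\rho\boxtimes S_{2x+1})\neq\eta(\rho\boxtimes S_{2x-1})$.

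Granting this, each clause of \cite[Theorem 3.1]{J2} becomes, essentially line by line, one of the three cases of the proposition; what has to be verified is that Jantzen's internal case division --- by the parity of the surviving multiplicity, and by whether the relevant sign invariant obstructs further descent --- matches the division recorded here in terms of $\eta$. It is convenient to note at this point that the bound $D_{\rho|\cdot|^x}^{(k)}(\pi(\phi,\eta))=0$ for $k>m$ is immediate from Xu's Lemma \ref{xu} applied to $\psi=\phi$: for a tempered parameter one has $(a_i-1)/2=x$ precisely when $\rho_i\boxtimes S_{a_i}\cong\rho\boxtimes S_{2x+1}$, so $k$ is bounded by $m=m_\phi(\rho\boxtimes S_{2x+1})$. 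Hence once the derivative in (1) is shown to be nonzero it is automatically the highest one, and the whole content of the proposition becomes the identification of the highest derivatives in the listed cases.

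For a reader who prefers a more self-contained argument, the one-step formula above can be obtained directly. Embed $\pi(\phi,\eta)$ as a subrepresentation of $\Delta_\rho[x,-x]\rtimes T$ for a suitable irreducible tempered $T$ (which exists by Casselman's criterion and the known shape of the tempered Jacquet module), compute $\mu^*$ of this induced representation by Tadi{\'c}'s formula (Theorem \ref{tadic}), and use exactness of $\Jac_{P_d}$, together with Zelevinsky's criterion (Theorem \ref{zel}) for the general linear segments that appear, to pin down the unique constituent of $\semi\Jac_{P_d}(\pi(\phi,\eta))$ supported on $\rho|\cdot|^x$ and to detect when the component-group sign forces vanishing. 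Since the target parameter is again tempered of good parity, an induction on $m$, with the scalars tracked by Tadi{\'c}'s formula exactly as in the proof of Proposition \ref{highest}, yields part (1). For part (2) one runs the same induction but stops after $m-1$ steps and inspects the last remaining Jacquet module, in which two copies of $\rho\boxtimes S_{2x+1}$ together with the conflicting sign survive; whether the resulting irreducible representation is tempered or only the Langlands subrepresentation involving the Steinberg piece $\Delta_\rho[x-1,-x]$ is decided by the parity of $m$, through a comparison of the socle of $\Delta_\rho[x,-x]\rtimes(\text{tempered})$ with a tempered constituent. Part (3), the case $x=0$, is separate: here $\rho|\cdot|^0$ is self-dual, so by Proposition \ref{highest} (2) the derivatives need not be irreducible, and a direct $M^*$-computation forces the constant $c_k$, the value $2^{k-1}$ versus $2^{k}$ recording whether a reducibility of the form $\rho\rtimes\sigma$ (with $\rho$ of the same type as $\phi$ and $m$ even) intervenes.

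The main obstacle will be the bookkeeping in part (2): one must isolate inside $\semi\Jac_{P_{d(m-1)}}(\pi(\phi,\eta))$ the single constituent $(\rho|\cdot|^x)^{m-1}\otimes D_{\rho|\cdot|^x}^{(m-1)}(\pi(\phi,\eta))$ and show it has the asserted (tempered, respectively Langlands) shape, which amounts to tracking how the component-group character $\eta$ behaves under each application of $\Jac_{\rho|\cdot|^x}$ while the sign obstruction is active --- a finite but delicate sign-chase, and precisely the genuinely combinatorial part of \cite{J2}. The scalar count in part (3) is the other nontrivial point, since one must separate genuine reducibility of $\rho\rtimes\sigma$ from the doubling inherent in self-dual derivatives. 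In both places the most economical route is to quote \cite[Theorem 3.1]{J2} and rely on the dictionary of \cite{At} rather than to redo the case analysis in full.
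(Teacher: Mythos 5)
Your opening paragraph reproduces the paper's treatment exactly: Proposition \ref{error} is stated there with no proof of its own, attributed directly to \cite[Theorem 3.1]{J2}, with the translation from the extended M{\oe}glin--Tadi{\'c} language into the $(\phi,\eta)$ language supplied by the Jacquet-module computations of \cite{At}, just as you describe, and your supplementary self-contained sketch is likewise consistent with the paper's own remark that the proposition can alternatively be derived from \cite{At}. The one inaccuracy in that sketch is the claim that $\Jac_{\rho|\cdot|^x}(\pi(\phi,\eta))$ vanishes precisely under the sign obstruction and otherwise equals a tempered replacement $\pi(\phi-\rho\boxtimes S_{2x+1}+\rho\boxtimes S_{2x-1},\eta')$: this is correct only when $m=1$, since for $m\geq 2$ under the obstruction part (2) shows the first derivative is nonzero but is the nontempered Langlands subrepresentation, so the sign conflict only produces vanishing at the $m$-th step --- which is indeed the ``delicate sign-chase'' you acknowledge at the end.
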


\begin{rem}
\begin{enumerate}
\item
In \cite{At}, the highest derivatives of irreducible tempered representations was also considered, 
but Proposition 6.3 and Remark 6.4 in that paper are mistakes. 

\item
One can also prove Proposition \ref{error} by using results in \cite{At}. 

\item
When $\rho$ is not self-dual, $x \geq 0$ and $m = m_{\phi}(\rho \boxtimes S_{2x+1}) > 0$, 
we have $m_{\phi}(\rho^\vee \boxtimes S_{2x+1}) = m$ and 
\[
\pi(\phi, \eta) = \Delta_\rho[x,-x]^{m} \rtimes \pi(\phi_0, \eta_0), 
\]
where $\phi_0 = \phi - (\rho \oplus \rho^\vee)^{m} \boxtimes S_{2x+1}$, and $\eta_0 = \eta|\AA_{\phi_0}$. 
In this case, the highest derivative is 
\[
D_{\rho|\cdot|^x}^{(m)}(\pi(\phi, \eta)) 
= \Delta_\rho[x-1,x]^{m} \rtimes \pi(\phi_0, \eta_0).
\]
Here, when $x=0$, we omit $\Delta_\rho[x-1,x]$.
\end{enumerate}
\end{rem}

As a consequence, we have the following. 
\begin{prop}\label{ML}
Let $\pi = \pi(\phi, \eta)$ be an irreducible tempered representation of $G_n$. 
Fix $x \in (1/2)\Z$ and assume that $\rho \boxtimes S_{2|x|+1}$ is self-dual of the same type as $\phi$. 

\begin{enumerate}
\item
Suppose that $M^-_\rho(\pi) = \left[ (x_1, k_1), \dots, (x_t,k_t), (x,k); M^-_\rho(\pi_1) \right]$
where $x > 0$ and $\pi_1 = \pi(\phi_1, \eta_1)$ tempered ($t$ can be zero). 
Then $x=1/2$, or 
$\phi_1$ contains $\rho \boxtimes S_{2x-1}$ with multiplicity greater than or equal to $k$. 
Moreover, if $\phi_1$ contains $\rho \boxtimes S_{2x+1}$ 
and $\eta_1(\rho \boxtimes S_{2x+1}) \not= \eta_1(\rho \boxtimes S_{2x-1})$, 
then $k$ is even. 
Set 
\[
\phi_2 = \phi_1 - (\rho \boxtimes S_{2x-1})^{k} + (\rho \boxtimes S_{2x+1})^{k}, 
\]
and define $\eta_2 \in \widehat{\AA_{\phi_2}}$ so that 
$\eta_2(\rho' \boxtimes S_{a}) = \eta_1(\rho' \boxtimes S_{a})$ for any $(\rho', a) \not= (\rho, 2x+1)$ and 
\[
\eta_2(\rho \boxtimes S_{2x+1}) = 
\left\{
\begin{aligned}
&\eta_1(\rho \boxtimes S_{2x+1}) \iif \phi_1 \supset \rho \boxtimes S_{2x+1}, \\
&\eta_1(\rho \boxtimes S_{2x-1}) \other. 
\end{aligned}
\right. 
\]
Here, we understand that $\phi_1 \supset \rho \boxtimes S_0$ and $\eta_1(\rho \boxtimes S_0) = 1$ when $x=1/2$. 
Then 
\[
M^-_\rho(\pi) = \left[ (x_1, k_1), \dots, (x_t,k_t); M^-_\rho(\pi(\phi_2,\eta_2)) \right].
\]

\item
Suppose that $M^-_\rho(\pi) = \left[ (x_1, k_1), \dots, (x_t,k_t), (x,k); M^-_\rho(\pi_1) \right]$
where $x = 0$ and $\pi_1 = \pi(\phi_1, \eta_1)$ tempered ($t$ can be zero). 
Set 
\[
\phi_2 = \phi_1 + \rho^{2k}. 
\]
Then there exists $\eta_2 \in \widehat{\AA_{\phi_2}}$ with $\eta_2|\AA_{\phi_1} = \eta_1$
such that 
\[
M^-_\rho(\pi) = \left[ (x_1, k_1), \dots, (x_t,k_t); M^-_\rho(\pi(\phi_2,\eta_2)) \right]. 
\]

\item
Suppose that $M^-_\rho(\pi) = \left[ (x_1, k_1), \dots, (x_t,k_t), (-x,k); M^-_\rho(\pi_1) \right]$
where $x > 0$ and $\pi_1 = \pi(\phi_1, \eta_1)$ tempered. 
Then 
\begin{itemize}
\item
$k=1$; 
\item
there exists $1 \leq t' \leq t$ such that 
\[
M^-_\rho(\pi) = \left[ (x_1, k_1), \dots, (x_{t'},k_{t'}), (x,k'), (x-1,1), \dots, (-x,1); M^-_\rho(\pi_1) \right]
\]
for some odd $k'$; 

\item
$\phi_1$ contains $\rho \boxtimes S_{2x-1}$ with multiplicity greater than or equal to $k'$; 

\item
$\phi_1$ does not contain $\rho \boxtimes S_{2x+1}$.
\end{itemize}
Set 
\[
\phi_2 = \phi_1 + (\rho \boxtimes S_{2x+1})^{k'+1} - (\rho \boxtimes S_{2x-1})^{k'-1}, 
\]
and define $\eta_2 \in \widehat{\AA_{\phi_2}}$ so that 
$\eta_2(\rho' \boxtimes S_{a}) = \eta_1(\rho' \boxtimes S_{a})$ for any $(\rho', a) \not= (\rho, 2x+1)$ and 
$\eta_2(\rho \boxtimes S_{2x+1}) = -\eta_1(\rho \boxtimes S_{2x-1})$. 
Then 
\[
M^-_\rho(\pi) = \left[ (x_1, k_1), \dots, (x_{t'},k_{t'}); M^-_\rho(\pi(\phi_2,\eta_2)) \right]. 
\]
\end{enumerate}
\end{prop}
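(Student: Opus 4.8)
The proof of all three parts rests on Proposition~\ref{error}, which gives the highest derivative of a tempered representation, together with Corollary~\ref{minus} and Lemma~\ref{ind}, which control $M^-_\rho$ once a non-tempered Langlands subrepresentation has appeared. The common reduction is as follows. By the inductive definition of $M^-_\rho$, if $\sigma$ denotes the irreducible representation underlying $D_{\rho|\cdot|^{x_t}}^{(k_t)}\circ\dots\circ D_{\rho|\cdot|^{x_1}}^{(k_1)}(\pi)$, then $M^-_\rho(\sigma)=[(x,k);M^-_\rho(\pi_1)]$ in part (1) (and $[(0,k);M^-_\rho(\pi_1)]$, $[(-x,k);M^-_\rho(\pi_1)]$ in parts (2), (3)). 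Hence it suffices to prove (i) that the stated numerical constraints hold, so that $(\phi_2,\eta_2)$ is well defined, and (ii) that $M^-_\rho(\pi(\phi_2,\eta_2))=[(x,k);M^-_\rho(\pi_1)]$ (and its analogues); granting (ii), one simply replaces the tail $[(x,k);M^-_\rho(\pi_1)]$ of $M^-_\rho(\pi)$ by $M^-_\rho(\pi(\phi_2,\eta_2))$.

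For (i), I will use that, by Proposition~\ref{highest}, $\sigma\hookrightarrow(\rho|\cdot|^x)^k\rtimes\pi_1$ with $D_{\rho|\cdot|^x}^{(k)}(\sigma)=m\cdot\pi_1$ the highest derivative and $\Jac_{\rho|\cdot|^z}(\sigma)=0$ for $z<x$, and also that $\Jac_{\rho|\cdot|^x}(\pi_1)=0$ (consecutive entries of $M^-_\rho$ being distinct). Feeding this into Proposition~\ref{error} applied to $\phi_1$ shows: if $\rho\boxtimes S_{2x+1}$ occurs in $\phi_1$ then it occurs with multiplicity one, $\eta_1(\rho\boxtimes S_{2x+1})\neq\eta_1(\rho\boxtimes S_{2x-1})$, and $k$ is even (this is the only way $\Jac_{\rho|\cdot|^x}(\pi_1)$ can vanish); and the vanishing criterion of Proposition~\ref{error}(1) forces $m_{\phi_1}(\rho\boxtimes S_{2x-1})\geq k$ unless $x=1/2$. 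The case $x=0$ is handled the same way using Proposition~\ref{error}(3). For (ii), since $\phi_2\in\Phi_\gp$ the representation $\pi(\phi_2,\eta_2)$ is tempered and Proposition~\ref{error} applies to it directly: one checks that $x$ (\resp $0$ in part (2)) is the minimal exponent with non-vanishing Jacquet module---the only borderline exponent to check being $x-1$, for which one uses the vanishing criterion together with the constraints from (i)---and that the highest derivative there is $m\cdot\pi_1$, via the explicit formula of Proposition~\ref{error} and the stated $\eta_2\leftrightarrow\eta_1$ dictionary (which is exactly what records whether the ``new'' copies of $\rho\boxtimes S_{2x+1}$ survive the derivative). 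This yields $M^-_\rho(\pi(\phi_2,\eta_2))=[(x,k);M^-_\rho(\pi_1)]$, and hence (ii).

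Part (3) is the most delicate, because a negative exponent $-x$ can enter $M^-_\rho$ of a tempered representation only through Proposition~\ref{error}(2) with $m$ even, where the highest derivative is the non-tempered representation $L(\Delta_\rho[x-1,-x];\pi')$ with $\pi'$ tempered. From that point Corollary~\ref{minus} (or Lemma~\ref{ind}) forces $M^-_\rho$ to unwind the segment $\Delta_\rho[x-1,-x]$ one box at a time, each with multiplicity one (the tempered part contributing nothing below exponent $0$), producing the chain $(x,k'),(x-1,1),\dots,(-x,1)$ with $k'=m-1$ odd, and returning to a tempered representation only at the far end. Reading this backwards gives $k=1$, the chain structure, $\phi_1\supset(\rho\boxtimes S_{2x-1})^{k'}$, $\rho\boxtimes S_{2x+1}\not\subset\phi_1$, and the tempered representation sitting just above the chain; the identity $M^-_\rho(\pi(\phi_2,\eta_2))=[(x,k'),(x-1,1),\dots,(-x,1);M^-_\rho(\pi_1)]$ is then verified by running Proposition~\ref{error}(2) and afterwards Corollary~\ref{minus} repeatedly in the forward direction. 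Throughout, the main obstacle is the bookkeeping at the ``seam'' between the peeled exponent and $M^-_\rho(\pi_1)$: one must ensure the highest derivative of $\pi(\phi_2,\eta_2)$ at exponent $x$ lands on $\pi_1$ with precisely multiplicity $k$---not $k\pm1$, nor $m_{\phi_2}(\rho\boxtimes S_{2x+1})$---and it is exactly the vanishing criterion of Proposition~\ref{error}(1), fed the constraints from (i), that makes this happen.
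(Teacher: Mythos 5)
Your overall skeleton matches the paper's: you locate the intermediate representation $\sigma$ (the paper's $\pi_2$) with $M^-_\rho(\sigma)=[(x,k);M^-_\rho(\pi_1)]$, you invoke Proposition~\ref{error} to move between $(\phi_1,\eta_1)$ and $(\phi_2,\eta_2)$, and for part~(3) you use Corollary~\ref{minus} to unwind the segment $\Delta_\rho[x-1,-x]$. That is the paper's route. However, there is a genuine gap in step~(i), and it is precisely the step the paper handles in one line that you omit.

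The paper observes, via Corollary~\ref{criterion}, that $\pi_2$ is \emph{tempered} (because $\pi_1$ is tempered and the tail $[(x,k);M^-_\rho(\pi_1)]$ still satisfies the Casselman criterion when $x\geq 0$), so $\pi_2=\pi(\phi_2,\eta_2)$ for a bona fide $\phi_2\in\Phi_\gp(G)$, and \emph{then} Proposition~\ref{error} is applied to $\pi_2$, not to $\pi_1$. You instead try to extract the constraints ``$m_{\phi_1}(\rho\boxtimes S_{2x-1})\geq k$'' and ``$k$ even in the degenerate case'' by applying Proposition~\ref{error} to $\phi_1$ together with $\Jac_{\rho|\cdot|^x}(\pi_1)=0$. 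That vanishing does give you that if $\phi_1\supset\rho\boxtimes S_{2x+1}$ then $m_{\phi_1}(\rho\boxtimes S_{2x+1})=1$ with $\phi_1\supset\rho\boxtimes S_{2x-1}$ and $\eta_1(\rho\boxtimes S_{2x+1})\neq\eta_1(\rho\boxtimes S_{2x-1})$, but it cannot give you either ``$k$ even'' or ``$m_{\phi_1}(\rho\boxtimes S_{2x-1})\geq k$'': $k$ is the length of a derivative of $\pi_2$, a quantity invisible to $\pi_1$ alone. Your parenthetical ``this is the only way $\Jac_{\rho|\cdot|^x}(\pi_1)$ can vanish'' conflates the two representations. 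In fact, the multiplicity bound comes from the requirement that $\phi_2=\phi_1-(\rho\boxtimes S_{2x-1})^k+(\rho\boxtimes S_{2x+1})^k$ lie in $\Phi_\gp$ (i.e.\ have nonnegative multiplicities), and the parity of $k$ comes from Proposition~\ref{error}(2) applied to $\pi_2$: the highest derivative $D^{(m'-1)}_{\rho|\cdot|^x}(\pi(\phi_2,\eta_2))$ in the degenerate case is tempered iff $m'=m_{\phi_2}(\rho\boxtimes S_{2x+1})=k+1$ is odd, and $\pi_1$ is tempered. Both arguments presuppose exactly the temperedness of $\pi_2$ that you assume in step~(ii) (``since $\phi_2\in\Phi_\gp$'') but never justify in step~(i). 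Inserting an invocation of Corollary~\ref{criterion} at the outset, to get $\pi_2$ tempered before anything else, closes the gap and makes your argument essentially identical to the paper's.
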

\begin{proof}
We show (1) and (2) so that $x \geq 0$. 
If an irreducible representation $\pi_2$ satisfies that 
$M^-_\rho(\pi) = \left[ (x_1, k_1), \dots, (x_t,k_t); M^-_\rho(\pi_2) \right]$, 
then $M^-_\rho(\pi_2) = \left[ (x, k); M^-_\rho(\pi_1) \right]$. 
By applying Corollary \ref{criterion} to $\pi_1$ and $\pi_2$, we see that $\pi_2$ is also tempered. 
If $\pi_2 = \pi(\phi_2,\eta_2)$, the relation between $(\phi_1, \eta_1)$ and $(\phi_2, \eta_2)$
is given in Proposition \ref{error}. 
Hence we obtain (1) and (2). 
\par

We show (3) so that $M^-_\rho(\pi) = \left[ (x_1, k_1), \dots, (x_t,k_t), (-x,k); M^-_\rho(\pi_1) \right]$ with $x>0$. 
By applying Corollary \ref{criterion} to $\pi$, 
we see that there exists $1 \leq t' \leq t$ such that 
$M^-_\rho(\pi) = \left[ (x_1, k_1), \dots, (x_{t'},k_{t'}), (x,k_1'), (x-1,k_2'), \dots, (-x,k_{2x+1}'); M^-_\rho(\pi_1) \right]$
with $k'_1 \geq \dots \geq k'_{2x+1} = k$. 
Take an irreducible representation $\pi_2$ such that 
$M^-_\rho(\pi) = \left[ (x_1, k_1), \dots, (x_{t'},k_{t'}); M^-_\rho(\pi_2) \right]$. 
Then $M^-_\rho(\pi_2) = \left[(x,k_1'), (x-1,k_2'), \dots, (-x,k_{2x+1}'); M^-_\rho(\pi_1) \right]$.
Since $\pi_1$ is tempered, by Corollary \ref{criterion}, we see that $\pi_2$ is also tempered. 
Write $\pi_2 = \pi(\phi_2,\eta_2)$. 
Then by Proposition \ref{error}, we have
\begin{itemize}
\item
$\phi_2$ contains $\rho \boxtimes S_{2x+1}$ with even multiplicity $2m > 0$; 
\item
$\phi_2$ contains $\rho \boxtimes S_{2x-1}$ and 
$\eta_2(\rho \boxtimes S_{2x+1}) \not= \eta_2(\rho \boxtimes S_{2x-1})$; 
\item
$k_1' = 2m-1$ and $k_2' = \dots = k'_{2x+1} = 1$; 
\item
$\phi_1 = \phi_2 - (\rho \boxtimes S_{2x+1})^{2m} + (\rho \boxtimes S_{2x-1})^{2m-2}$
so that $\phi_1$ contains $\rho \boxtimes S_{2x-1}$
with multiplicity greater than $2m-2 = k'_1-1$. 
\end{itemize}
Hence we obtain (3). 
\end{proof}

When $\rho \boxtimes S_{2|x|+1}$ is not self-dual of the same type as $\phi$, 
a similar (and easier) statement holds. 
We leave the detail for readers.
We note that: 
\begin{itemize}
\item
When $\rho \boxtimes S_{2|x|+1}$ is self-dual of the opposite type to $\phi$,
the case (3) cannot occur. 

\item
When $\rho$ is not self-dual, 
the case (1) cannot occur. 
\end{itemize}

\subsection{Irreducibility of certain induced representations}\label{s.irr}
Using the highest derivatives, Jantzen proved some irreducibility of parabolically induced representations. 
For $\phi \in \Phi_\gp(G)$, 
we denote the multiplicity of $\rho \boxtimes S_a$ in $\phi$ by $m_\phi(\rho \boxtimes S_a)$.
For consistency, we define $m_\phi(\rho \boxtimes S_0) = 1$ and $\eta(\rho \boxtimes S_0) = +1$
if $\rho$ is self-dual of the opposite type to $\phi$.  

\begin{thm}[{\cite[Theorem 4.7]{J2}}]\label{irr0}
Let $\phi \in \Phi_\gp(G)$ and $\eta \in \widehat{\Sc_\phi}$. 
Fix $a \in \Z$ with $a \geq 2$ such that $\rho \boxtimes S_a$ is self-dual of the same type as $\phi$. 
Consider $\rho|\cdot|^{\half{a-1}} \rtimes \pi(\phi, \eta)$. 
\begin{enumerate}
\item
If $m_{\phi}(\rho \boxtimes S_{a-2}) = 0$, 
then $\rho|\cdot|^{\half{a-1}} \rtimes \pi(\phi, \eta)$ is irreducible. 
\item
If $m_{\phi}(\rho \boxtimes S_{a-2}) = 1$, then
\begin{enumerate}
\item
$\rho|\cdot|^{\half{a-1}} \rtimes \pi(\phi, \eta)$ is reducible 
if $m_\phi(\rho \boxtimes S_a) = 0$ or $\eta(\rho \boxtimes S_a) = \eta(\rho \boxtimes S_{a-2})$;
\item
$\rho|\cdot|^{\half{a-1}} \rtimes \pi(\phi, \eta)$ is irreducible 
if $m_\phi(\rho \boxtimes S_a) > 0$ and $\eta(\rho \boxtimes S_a) \not= \eta(\rho \boxtimes S_{a-2})$.
\end{enumerate}
\item
If $m_\phi(\rho \boxtimes S_{a-2}) \geq 2$, 
then $\rho|\cdot|^{\half{a-1}} \rtimes \pi(\phi, \eta)$ is reducible. 
\end{enumerate}
\end{thm}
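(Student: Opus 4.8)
Set $x = \half{a-1}$, a positive half-integer, so that $\rho|\cdot|^x$ is not self-dual. The plan is to reduce the reducibility question to a Jacquet-module computation governed by Proposition \ref{error}. First I would record that $I := \rho|\cdot|^x \rtimes \pi(\phi,\eta)$ has the same irreducible constituents, with multiplicities, as the standard module $\rho|\cdot|^{-x} \rtimes \pi(\phi,\eta)$ (the characters agree), and that the latter has a unique irreducible subrepresentation $L := L(\rho|\cdot|^{-x};\pi(\phi,\eta))$, so that $[I:L] \geq 1$. Hence $I$ is irreducible if and only if $\rho|\cdot|^{-x} \rtimes \pi(\phi,\eta)$ has no constituent besides $L$. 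The strategy is then to compute the Jacquet module along $P_d$ on both sides, detect any second constituent $\pi^\flat$, and show that its presence or absence is exactly the trichotomy.

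Concretely, Tadi\'c's formula (Theorem \ref{tadic}), applied with $\rho \cong \rho^\vee$ and $\semi\Jac_{P_d}(\pi(\phi,\eta)) = \sum_j \tau_j \boxtimes \pi_j$, gives
\[
\semi\Jac_{P_d}(I) = \rho|\cdot|^x \boxtimes \pi(\phi,\eta) + \rho|\cdot|^{-x} \boxtimes \pi(\phi,\eta) + \sum_j \tau_j \boxtimes \bigl(\rho|\cdot|^x \rtimes \pi_j\bigr),
\]
where $\semi\Jac_{P_d}(\pi(\phi,\eta))$ itself is made explicit by Proposition \ref{error} together with its analogue for $\Jac_{\rho|\cdot|^{x'}}$ with $x'\le 0$ (available from the Jacquet-module analysis of tempered representations in \cite{At}). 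One checks that a second constituent $\pi^\flat$ of $I$ is visible through $\Jac_{P_d}$ and can occur only when $\pi(\phi,\eta)$ reappears inside the sum $\sum_j \tau_j \boxtimes (\rho|\cdot|^x \rtimes \pi_j)$, i.e.\ through an overlap of the behaviours of $\Jac_{\rho|\cdot|^x}(\pi(\phi,\eta))$ and $\Jac_{\rho|\cdot|^{-x}}(\pi(\phi,\eta))$. Writing $m = m_\phi(\rho\boxtimes S_a)$ and $m' = m_\phi(\rho\boxtimes S_{a-2})$, the three cases of the theorem then fall out of the vanishing clauses of Proposition \ref{error}: if $m'=0$, then $D_{\rho|\cdot|^x}^{(m)}(\pi(\phi,\eta))$ is nonzero and no $\pi^\flat$ appears (irreducibility); if $m'\ge 2$ the derivative data of $\pi(\phi,\eta)$ at $x$ and at $x-1$ genuinely collide and produce $\pi^\flat$ (reducibility); and if $m'=1$ the answer is the one dictated by the vanishing clause of Proposition \ref{error}(1): when $m=0$ or $\eta(\rho\boxtimes S_a)=\eta(\rho\boxtimes S_{a-2})$ a constituent $\pi^\flat$ survives, while when $m>0$ and $\eta(\rho\boxtimes S_a)\ne\eta(\rho\boxtimes S_{a-2})$ — so that $D_{\rho|\cdot|^x}^{(m)}(\pi(\phi,\eta))=0$ and the explicit form of $D_{\rho|\cdot|^x}^{(m-1)}(\pi(\phi,\eta))$ in Proposition \ref{error}(2) is ``tight'' — it does not.

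The hard part will be the passage from the semisimplified Jacquet module to the actual constituent count, intertwined with the $\eta$-sensitivity in the case $m' = 1$. In practice I would (a) compute the Langlands data of $L$ and of each candidate $\pi^\flat$ via the $\rho$-data formalism of \S\ref{s.data}, Theorem \ref{vs} and Proposition \ref{ML}, so as to identify the constituents themselves rather than only their Jacquet modules; and (b) in the delicate sub-case $m'=1$, $m>0$, $\eta(\rho\boxtimes S_a)\ne\eta(\rho\boxtimes S_{a-2})$, use the even-$m$ formula of Proposition \ref{error}(2), with its extra factor $\Delta_\rho[x-1,x]$, to verify that no $\pi^\flat$ survives. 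As a consistency check in the cases $m>0$, one can observe that $\pi(\phi,\eta)$ lies in the $A$-packet $\Pi_\phi$ (Theorem \ref{arthur}) and invoke Lemma \ref{xu} to bound the highest-derivative order of $\pi(\phi,\eta)$ with respect to $\rho|\cdot|^x$ by $m$, in agreement with the value $m$ or $m-1$ used above.
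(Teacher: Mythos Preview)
The paper does not prove this theorem: it is stated with the attribution \cite[Theorem 4.7]{J2} and no argument is given, since the result is quoted from Jantzen for later use (in the proof of Proposition~\ref{irr1}). So there is no ``paper's own proof'' to compare against.

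Your sketch is in the right spirit and is plausibly close to Jantzen's original argument, since \cite{J2} is precisely about deducing (ir)reducibility from Jacquet-module/derivative computations; reducing to Proposition~\ref{error} (which is \cite[Theorem~3.1]{J2}, proven earlier in that paper) is not circular. Two points deserve tightening, though. First, the step ``a second constituent $\pi^\flat$ is visible through $\Jac_{P_d}$'' is the crux and is not free: you need an argument that any non-Langlands constituent of $\rho|\cdot|^{-x}\rtimes\pi(\phi,\eta)$ has $\Jac_{\rho|\cdot|^x}$ or $\Jac_{\rho|\cdot|^{-x}}$ nonzero in a way distinguishable from $L$. The cleanest way is not to count constituents in a single Jacquet module but to compare highest derivatives: compute $k$ such that $D_{\rho|\cdot|^x}^{(k)}$ of the full induced representation vanishes for $k>m$ (Tadi\'c's formula plus Proposition~\ref{error}), and separately identify the highest derivative of $L$; irreducibility holds exactly when these agree as representations, not merely in length. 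Second, in the reducible cases you should actually name the extra constituent (it is tempered, in $\Pi_{\phi+\rho\boxtimes(S_a+S_{a-2})}$, or is $L(\Delta_\rho[x-1,-x];\pi(\phi',\eta'))$-shaped) rather than infer its existence from a Jacquet-module surplus, since multiplicities in $\semi\Jac_{P_d}$ alone do not immediately separate ``one constituent with multiplicity two in the Jacquet module'' from ``two constituents''.
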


We also need the irreducibility of other parabolically induced representations. 
\begin{prop}\label{irr01}
Let $\phi \in \Phi_\gp(G)$ and $\eta \in \widehat{\Sc_\phi}$. 
Fix $x \in (1/2)\Z$ with $x \geq 1$ such that $\rho \boxtimes S_{2x+1}$ is self-dual of the same type as $\phi$. 
\begin{enumerate}
\item
$\Delta_\rho[x-1,-x] \rtimes \pi(\phi, \eta)$ is irreducible 
if and only if $\phi$ contains both $\rho \boxtimes S_{2x+1}$ and $\rho \boxtimes S_{2x-1}$, 
and $\eta(\rho \boxtimes S_{2x+1}) \not= \eta(\rho \boxtimes S_{2x-1})$. 

\item
If $\phi$ contains both $\rho \boxtimes S_{2x+1}$ and $\rho \boxtimes S_{2x-1}$, 
but if $\eta(\rho \boxtimes S_{2x+1}) = \eta(\rho \boxtimes S_{2x-1})$, 
then $\Delta_\rho[x-1,-x] \rtimes L(\Delta_\rho[0,-1]; \pi(\phi, \eta))$ is irreducible. 
\end{enumerate}
\end{prop}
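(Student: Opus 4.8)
The plan is to analyze the standard module $\Pi:=\Delta_\rho[x-1,-x]\rtimes\pi(\phi,\eta)$ (and, for (2), $\Pi':=\Delta_\rho[x-1,-x]\rtimes L(\Delta_\rho[0,-1];\pi(\phi,\eta))$) by comparing its Jordan--H\"older constituents with its unique irreducible subrepresentation. Since $\Delta_\rho[x-1,-x]$ has negative central exponent, the socle of $\Pi$ is $\pi^\flat:=L(\Delta_\rho[x-1,-x];\pi(\phi,\eta))$; for (2) the two segments commute by Theorem \ref{zel} (as $[0,-1]\subseteq[x-1,-x]$ when $x\geq1$), so $\Pi'$ is again a standard module, with socle $\varpi:=L(\Delta_\rho[x-1,-x],\Delta_\rho[0,-1];\pi(\phi,\eta))$. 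By Theorem \ref{arthur} (3) (cf. Example \ref{ex.xu1} (1)) we have $\pi^\flat\in\Pi_\psi$ for $\psi=\phi+\rho\boxtimes S_{2x}\boxtimes S_2$ and $\varpi\in\Pi_{\psi'}$ for $\psi'=\phi+\rho\boxtimes S_{2x}\boxtimes S_2+\rho\boxtimes S_2\boxtimes S_2$, so Lemma \ref{xu} bounds the derivatives (hence constrains the possible constituents) at $\rho|\cdot|^{x}$ and $\rho|\cdot|^{x-1}$ in terms of $m_\phi(\rho\boxtimes S_{2x\pm1})$. Irreducibility of $\Pi$ is then equivalent to $\semi\Jac_{P_{2dx}}(\Pi)$ being exhausted by $\pi^\flat$; here $\semi\Jac_{P_{2dx}}(\Pi)$ is the summand in $\RR_{2dx}\otimes\RR(G)$ of $M^*(\Delta_\rho[x-1,-x])\rtimes\mu^*(\pi(\phi,\eta))$, computed via Theorem \ref{tadic}, while $\semi\Jac_{P_{2dx}}(\pi^\flat)$ is read off from the $\rho$-data $M^-_\rho(\pi^\flat)$ by Theorems \ref{data} and \ref{vs}.

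So the heart of the matter is to compute $M^-_\rho(\pi^\flat)$ explicitly. Peeling the segment $\Delta_\rho[x-1,-x]$ off $\pi^\flat$ amounts, by Corollary \ref{minus}, to applying $D_{\rho|\cdot|^{x-1}},D_{\rho|\cdot|^{x-2}},\dots,D_{\rho|\cdot|^{-x}}$ in turn and arriving at $\pi(\phi,\eta)$; but the $\rho$-data is typically longer, because the tempered factor $\pi(\phi,\eta)$ interacts with each rung $\rho|\cdot|^{y}$, $y\in[-x,x-1]$, of the segment. The decisive interactions are with $\rho\boxtimes S_{2x+1}$ (through $y=x$ and $y=x-1$) and with $\rho\boxtimes S_{2x-1}$, and are governed by Proposition \ref{error} (together with Proposition \ref{ML}); it is precisely the vanishing criterion of Proposition \ref{error} (1) --- which turns on whether $\phi\supseteq\rho\boxtimes S_{2x-1}$ and on whether $\eta(\rho\boxtimes S_{2x+1})=\eta(\rho\boxtimes S_{2x-1})$ --- that will separate the irreducible from the reducible cases. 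I would organize this as an induction on $x$: strip $\rho|\cdot|^{x-1}$ from the left and $\rho|\cdot|^{-x}$ from the right of $\Delta_\rho[x-1,-x]$, using $\Delta_\rho[x-1,-x]\hookrightarrow\rho|\cdot|^{x-1}\times\Delta_\rho[x-2,-x+1]\times\rho|\cdot|^{-x}$ together with $\rho|\cdot|^{-x}\rtimes\pi(\phi,\eta)\cong\rho|\cdot|^{x}\rtimes\pi(\phi,\eta)$ (whose structure is Theorem \ref{irr0}), thereby reducing the case $x$ to the case $x-1$ for the segment $\Delta_\rho[x-2,-(x-1)]$ and to Propositions \ref{error} and \ref{irr0}; the base case $x=1$ comes down to Theorem \ref{irr0} and to the behaviour of $\rho\rtimes\sigma$ and $\Delta_\rho[0,-1]\rtimes\sigma$ for tempered $\sigma$, which follows from Proposition \ref{error}.

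With $M^-_\rho(\pi^\flat)$ in hand, part (1) splits into two directions. When $\phi$ contains both $\rho\boxtimes S_{2x+1}$ and $\rho\boxtimes S_{2x-1}$ with $\eta(\rho\boxtimes S_{2x+1})\neq\eta(\rho\boxtimes S_{2x-1})$, the two Jacquet modules $\semi\Jac_{P_{2dx}}(\Pi)$ and $\semi\Jac_{P_{2dx}}(\pi^\flat)$ coincide, so $\Pi$ is irreducible; when the hypothesis fails (either $\rho\boxtimes S_{2x+1}\not\subseteq\phi$, or $\rho\boxtimes S_{2x-1}\not\subseteq\phi$, or the two sign characters agree), the Tadi\'c computation produces a constituent of $\semi\Jac_{P_{2dx}}(\Pi)$ not visible in $\semi\Jac_{P_{2dx}}(\pi^\flat)$, so $\Pi$ is reducible. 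Part (2) follows the same scheme with $(\psi',\varpi)$ in place of $(\psi,\pi^\flat)$: under $\eta(\rho\boxtimes S_{2x+1})=\eta(\rho\boxtimes S_{2x-1})$ the inserted segment $\Delta_\rho[0,-1]$ restores the coincidence of the two Jacquet modules, so $\Pi'$ is irreducible. The main obstacle is the computation of $M^-_\rho(\pi^\flat)$: since the tempered factor touches every rung of the segment, the bookkeeping genuinely branches on the multiplicities $m_\phi(\rho\boxtimes S_{2x\pm1})$ --- which, unlike in Theorem \ref{irr0}, are not bounded by $2$ --- and on the various coincidences among the relevant values of $\eta$; keeping the induction on $x$ uniform across all these cases is where the real work lies.
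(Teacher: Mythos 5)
You take a genuinely different route from the paper. The paper proves the ``if'' direction of (1) by contradiction: if $\Delta_\rho[x-1,-x]\rtimes\pi(\phi,\eta)$ were reducible, any irreducible quotient $\pi'$ would be tempered (the Langlands subrepresentation occurs with multiplicity one in the standard module), hence $\pi'=\pi(\phi',\eta')$ with $\phi'=\phi+\rho\boxtimes(S_{2x+1}+S_{2x-1})$; a short chain of highest-derivative computations via Proposition \ref{error} then forces a contradiction, by comparing what the successive highest derivatives $D_{\rho|\cdot|^x}^{(k+1)}$, $D_{\rho|\cdot|^{x-1}}^{(l+2)}$, \dots of $\pi'$ would have to be against what the derivatives of $\Delta_\rho[x-1,-x]\rtimes\pi(\phi,\eta)$ actually produce. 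Part (2) is handled by a separate embedding argument involving $\lambda=L(\Delta_\rho[x-1,-x]^2;\pi(\phi,\eta))$. The crucial point is that the paper never needs the full Jacquet module $\semi\Jac_{P_{2dx}}$ --- only highest derivatives $D_{\rho|\cdot|^y}^{(k)}$, which \emph{are} controlled by $M^-_\rho$ and Proposition \ref{error}.

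Your plan, by contrast, hinges on computing $\semi\Jac_{P_{2dx}}(\pi^\flat)$ exactly and comparing it with $\semi\Jac_{P_{2dx}}(\Pi)$, and that is where the gap lies. The $\rho$-data $M^-_\rho(\pi^\flat)$ records only a specific chain of highest partial Jacquet modules $D_{\rho|\cdot|^y}^{(k)}$; it does not determine $\semi\Jac_{P_{2dx}}(\pi^\flat)\in\RR_{2dx}\otimes\RR(G)$, which contains subquotients $\tau\boxtimes\sigma'$ with $\tau$ an arbitrary irreducible representation of $\GL_{2dx}(F)$, not just (co)socles of products of characters $\rho|\cdot|^y$. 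Theorems \ref{data} and \ref{vs} recover the Langlands data of $\pi^\flat$ from $M^-_\rho(\pi^\flat)$, but passing from Langlands data to the full $\semi\Jac_{P_{2dx}}$ is itself a nontrivial Tadi{\'c}-type computation that also needs the (uncomputed) Jacquet module of the tempered factor; it is not a direct read-off. So the object your comparison requires is never actually produced by the tools you invoke, and the ``real work'' you defer is not merely bookkeeping but the missing content of the proof. (A smaller, fixable point: the equivalence ``$\Pi$ irreducible iff $\semi\Jac_{P_{2dx}}(\Pi)=\semi\Jac_{P_{2dx}}(\pi^\flat)$'' rests on every irreducible subquotient $\sigma$ of $\Pi$ having $\Jac_{P_{2dx}}(\sigma)\neq 0$; this does hold here since the cuspidal support of $\sigma$ contains at least $2x$ unramified twists of $\rho$, so $\Jac_{P_d}^{\circ 2x}(\sigma)\neq 0$ and hence $\Jac_{P_{2dx}}(\sigma)\neq 0$ by transitivity, but it should be said.)
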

\begin{proof}
The only if part of (1) is proven in \cite[Proposition 5.2]{At}. 
The if part is similar to that proposition. 
Assume that $\phi$ contains both $\rho \boxtimes S_{2x+1}$ and $\rho \boxtimes S_{2x-1}$ 
and $\eta(\rho \boxtimes S_{2x+1}) \not= \eta(\rho \boxtimes S_{2x-1})$. 
Suppose that $\Delta_\rho[x-1,-x] \rtimes \pi(\phi, \eta)$ is reducible. 
Take an irreducible quotient $\pi'$ of $\Delta_\rho[x-1,-x] \rtimes \pi(\phi, \eta)$. 
Since the Langlands subrepresentation appears in the standard module 
as subquotients with multiplicity one, 
we see that $\pi'$ is tempered. 
Moreover, $\pi' \in \Pi_{\phi'}$ with $\phi' = \phi + \rho \boxtimes (S_{2x+1} + S_{2x-1})$.
We write $\pi' = \pi(\phi', \eta')$. 
When $D_{\rho|\cdot|^{x}}^{(k)}(\pi(\phi, \eta))$ is the highest derivative, 
by Proposition \ref{error}, we see that $D_{\rho|\cdot|^{x}}^{(k+1)}(\pi')$ is equal to
the irreducible induction $\Delta_\rho[x-1,-(x-1)] \rtimes D_{\rho|\cdot|^{x}}^{(k)}(\pi(\phi, \eta))$. 
If $\eta'(\rho \boxtimes S_{2x+1}) = \eta'(\rho \boxtimes S_{2x-1})$, then 
$D_{\rho|\cdot|^{x}}^{(k+1)}(\pi')$ is tempered 
such that its $L$-parameter $\phi''$ does not contain $\rho \boxtimes S_{2x+1}$, 
whereas, if $\Delta_\rho[x-1,-(x-1)] \rtimes D_{\rho|\cdot|^{x}}^{(k)}(\pi(\phi, \eta))$ is tempered, 
then its $L$-parameter must contain $\rho \boxtimes S_{2x+1}$. 
This is a contradiction, so that we have $\eta'(\rho \boxtimes S_{2x+1}) \not= \eta'(\rho \boxtimes S_{2x-1})$. In addition, we have equations of the highest derivatives 
\[
\left\{
\begin{aligned}
&D_{\rho|\cdot|^{x-1}}^{(l+2)} \circ D_{\rho|\cdot|^{x}}^{(k+1)}(\pi')
=
\Delta_\rho[x-2,-(x-2)] \rtimes D_{\rho|\cdot|^{x-1}}^{(l)} \circ D_{\rho|\cdot|^{x}}^{(k)}(\pi(\phi, \eta))
\iif x \geq2, \\
&D_{\rho|\cdot|^{\half{1}}}^{(l+2)} \circ D_{\rho|\cdot|^{\half{3}}}^{(k+1)}(\pi')
=
D_{\rho|\cdot|^{\half{1}}}^{(l)} \circ D_{\rho|\cdot|^{\half{3}}}^{(k)}(\pi(\phi, \eta))
\iif x =\half{3}, \\
&D_{\rho|\cdot|^{0}}^{(l+1)} \circ D_{\rho|\cdot|^{1}}^{(k+1)}(\pi')
= 2 \cdot
D_{\rho|\cdot|^{0}}^{(l)} \circ D_{\rho|\cdot|^{1}}^{(k)}(\pi(\phi, \eta))
\iif x =1.
\end{aligned}
\right.
\]
In each case, by Proposition \ref{error}, 
$\Delta_\rho[x-2, -x]$ appears in exactly one of the left or right hand side. 
This is a contradiction.
\par

(2) is proven similarly to \cite[Lemma 6.3]{J3}.
We give a sketch of the proof. 
\begin{itemize}
\item
For $\phi' = \phi - \rho \boxtimes (S_{2x+1} + S_{2x-1})$, 
we have
\[
\pi(\phi, \eta) \hookrightarrow \Delta_\rho[x,-(x-1)] \rtimes \pi(\phi', \eta').
\] 

\item
If we set $\pi = L(\Delta_\rho[x-1,-x]; \pi(\phi, \eta))$, 
then $\pi \hookrightarrow \Delta_{\rho}[x-1,-x] \times \Delta_\rho[x,-(x-1)] \rtimes \pi(\phi', \eta')$. 
This implies that $\pi \hookrightarrow L(\Delta_{\rho}[x-1,-x], \Delta_\rho[x,-(x-1)]) \rtimes \pi(\phi', \eta')$ or 
$\pi \hookrightarrow \Delta_{\rho}[x,-x] \times \Delta_\rho[x-1,-(x-1)] \rtimes \pi(\phi', \eta')$. 
Since $\pi$ is non-tempered, the former case must hold. 

\item
Put $m = m_{\phi}(\rho \boxtimes S_{2x+1})$. 
Since 
$L_{\rho|\cdot|^x}(L(\Delta_{\rho}[x-1,-x], \Delta_\rho[x,-(x-1)])) 
= R_{\rho|\cdot|^{-x}}(L(\Delta_{\rho}[x-1,-x], \Delta_\rho[x,-(x-1)])) = 0$, 
and since $L(\Delta_{\rho}[x-1,-x], \Delta_\rho[x,-(x-1)])$ commutes with $\rho|\cdot|^x$, 
we see that $D_{\rho|\cdot|^x}^{(m-1)}(\pi)$ is the highest derivative, 
and 
\[
D_{\rho|\cdot|^x}^{(m-1)}(\pi) \hookrightarrow 
L(\Delta_{\rho}[x-1,-x], \Delta_\rho[x,-(x-1)]) \rtimes D_{\rho|\cdot|^x}^{(m-1)}(\pi(\phi', \eta')). 
\]

\item
Set $\lambda = L(\Delta_\rho[x-1,-x]^2; \pi(\phi, \eta))$. 
Since up to semisimplification
\begin{align*}
\Delta_\rho[x-1,-x]^2 \rtimes \pi(\phi, \eta) 
=& 
\Delta_\rho[x-1,-x] \times \Delta_\rho[x,-(x-1)] \rtimes \pi(\phi, \eta) 
\\=&
L(\Delta_\rho[x-1,-x], \Delta_\rho[x,-(x-1)]) \rtimes \pi(\phi, \eta) 
\\&\oplus
\Delta_\rho[x,-x] \times \Delta_\rho[x-1,-(x-1)] \rtimes \pi(\phi, \eta), 
\end{align*}
we see that $\lambda$ is a subrepresentation of $L(\Delta_\rho[x-1,-x], \Delta_\rho[x,-(x-1)]) \rtimes \pi(\phi, \eta)$.
In particular, 
\[
D_{\rho|\cdot|^x}^{(m)}(\lambda) \hookrightarrow
L(\Delta_\rho[x-1,-x], \Delta_\rho[x,-(x-1)]) \rtimes D_{\rho|\cdot|^x}^{(m)}(\pi(\phi, \eta))
\]
is the highest derivative. 

\item
Since $\lambda \hookrightarrow \Delta_\rho[x-1,-x] \rtimes \pi$, 
we must have 
\[
D_{\rho|\cdot|^x}^{(m)}(\lambda) \subset \Delta_\rho[x-1,-(x-1)] \rtimes D_{\rho|\cdot|^x}^{(m-1)}(\pi).
\]
In particular, 
since $\lambda \hookrightarrow (\rho|\cdot|^x)^m \times \Delta_\rho[x-1,-(x-1)] \rtimes D_{\rho|\cdot|^x}^{(m-1)}(\pi)$, 
we have 
$\lambda \hookrightarrow \Delta_\rho[x,-(x-1)] \times (\rho|\cdot|^x)^{m-1} \rtimes D_{\rho|\cdot|^x}^{(m-1)}(\pi)$
or 
$\lambda \hookrightarrow 
L(\Delta_\rho[x-1,-(x-1)], \rho|\cdot|^x) \times (\rho|\cdot|^x)^{m-1} \rtimes D_{\rho|\cdot|^x}^{(m-1)}(\pi)$.
Since $x \geq1$ and $D_{\rho|\cdot|^x}^{(m)}(\lambda) \not=0$, the former case must hold.
Moreover, we see that 
\[
\lambda \hookrightarrow \Delta_\rho[x,-(x-1)] \rtimes \pi.
\]

\item
Since $\Delta_\rho[x,-(x-1)] \times \pi(\phi, \eta) \twoheadrightarrow \pi$, 
we see that $\lambda$ is the unique irreducible (Langlands) quotient of $\Delta_\rho[x,-(x-1)] \rtimes \pi$. 
Since this quotient appear in subquotients with multiplicity one, 
$\Delta_\rho[x,-(x-1)] \rtimes \pi$ must be irreducible. 
\end{itemize}
This completes the proof.
\end{proof}

\subsection{M{\oe}glin's construction}\label{s.moe}
To obtain Theorem \ref{mult1}, M{\oe}glin constructed $A$-packets $\Pi_\psi$ concretely. 
In this subsection, we review her construction in a special case. 
For more precision, see \cite{Moe1, Moe2, Moe3} and \cite{X2}.
\par

Let $\psi \in \Psi_\gp(G_n)$. 
We assume that $\psi$ is of the form 
\[
\psi = \phi_0 + \left( \bigoplus_{i=1}^r \rho \boxtimes S_{a_i} \boxtimes S_{b_i} \right)
\]
such that
\begin{enumerate}
\item[(a)]
$\phi_0 \in \Phi_\gp(G)$ such that $\phi_0 \not\supset \rho \boxtimes S_d$ for any $d \geq 1$; 
\item[(b)]
$a_i \geq b_i$ for any $1 \leq i \leq r$; 
\item[(c)]
if $a_i-b_i > a_j - b_j$ and $a_i + b_i > a_j + b_j$, then $i > j$. 
\end{enumerate}
Note that the last condition may not determine an order on $\{(a_1,b_1), \dots, (a_r,b_r)\}$ uniquely. 
Once we fix such an order, 
we write $\rho \boxtimes S_{a_i} \boxtimes S_{b_i} <_\psi \rho \boxtimes S_{a_j} \boxtimes S_{b_j}$ if $i<j$.
\par

Fix $\eta_0 \in \widehat{\AA_{\phi_0}}$ such that 
if $\phi_0 = \oplus_i \phi_i$ is a decomposition into irreducible representations, 
then $\eta_0(\psi_i) = \eta_0(\psi_j)$ whenever $\phi_i \cong \phi_j$.
Let $\ub{l} = (l_1, \dots, l_r) \in \Z^r$ and $\ub{\eta} = (\eta_1, \dots, \eta_r) \in \{\pm1\}^r$ such that 
$0 \leq l_i \leq b_i/2$ and  
\[
\eta_0(z_{\phi_0}) 
\prod_{i=1}^r (-1)^{[b_i/2]+l_i}\eta_i^{b_i} = 1. 
\]
For these data, 
M{\oe}glin constructed a representation $\pi_{<_\psi}(\psi, \ub{l}, \ub{\eta}, \eta_0)$ of $G_n$. 
If the order $<_\psi$ is fixed, 
we may write $\pi(\psi, \ub{l}, \ub{\eta}, \eta_0) = \pi_{<_\psi}(\psi, \ub{l}, \ub{\eta}, \eta_0)$.

\begin{thm}[M{\oe}glin]\label{moe}
Notation is as above. 

\begin{enumerate}
\item
The representation $\pi(\psi, \ub{l}, \ub{\eta}, \eta_0)$ is irreducible or zero.
Moreover, 
\[
\Pi_\psi = \{ \pi(\psi, \ub{l}, \ub{\eta}, \eta_0) \;|\; \text{$\ub{l}, \ub{\eta}, \eta_0$ as above}\} \setminus \{0\}. 
\]

\item
If $\pi(\psi, \ub{l}, \ub{\eta}, \eta_0) \cong \pi(\psi, \ub{l'}, \ub{\eta'}, \eta'_0) \not= 0$, 
then $\ub{l} = \ub{l'}$, $\eta_0 = \eta_0'$, and $\eta_i = \eta_i'$ unless $l_i = b_i/2$. 

\item
Assume further that $a_i-b_i \geq a_{i-1}+b_{i-1}$ for any $i > 1$. 
Then $\pi(\psi, \ub{l}, \ub{\eta}, \eta_0)$ is nonzero and is a unique irreducible subrepresentation of 
\[
\bigtimes_{i=1}^r 
L\left(\Delta_{\rho}\left[ \half{a_i-b_i}, -\half{a_i+b_i}+1\right], 
\dots, 
\Delta_{\rho}\left[ \half{a_i-b_i}+l_i-1, -\half{a_i+b_i}+l_i\right] \right)
\rtimes \pi(\phi, \eta), 
\]
where 
\[
\phi = \phi_0 + \left(
\bigoplus_{i=1}^r \rho \boxtimes (S_{a_i-b_i+2l_i+1} + \dots + S_{a_i+b_i-2l_i-1})
\right),
\]
and $\eta \in \widehat{\Sc_\phi} \subset \widehat{\AA_\phi}$ is given by 
$\eta|\AA_{\phi_0} = \eta_0$ and 
$\eta(\rho \boxtimes S_{a_i-b_i+2l_i+2c-1}) = (-1)^{c-1} \eta_i$ for $1 \leq c \leq b_i - 2l_i$.

\item
Suppose that 
\[
\psi' = \phi_0 + \left( \bigoplus_{i=1}^r \rho \boxtimes S_{a'_i} \boxtimes S_{b_i} \right) \in \Psi_\gp(G)
\]
also satisfies the above assumptions (a)--(c).
Assume further that $a'_i \geq a_i$ for any $i \geq 1$ 
and that $a'_i-b_i \geq a'_{i-1}+b_{i-1}$ for any $i > 1$. 
Then 
\[
\pi(\psi, \ub{l}, \ub{\eta}, \eta_0) = 
\Jac_{\rho|\cdot|^{\half{a'_r-1}}, \dots, \rho|\cdot|^{\half{a_r+1}}}
\circ \dots \circ
\Jac_{\rho|\cdot|^{\half{a'_1-1}}, \dots, \rho|\cdot|^{\half{a_1+1}}}
(\pi(\psi', \ub{l}, \ub{\eta}, \eta_0)).
\]
\end{enumerate}
\end{thm}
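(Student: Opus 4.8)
The plan is to derive all four assertions from M{\oe}glin's explicit, inductive construction of $A$-packets in \cite{Moe1, Moe2, Moe3}, together with Xu's reformulation \cite{X2}; the normalization of $\pi(\psi,\ub{l},\ub{\eta},\eta_0)$ has been set up precisely so that the bookkeeping below goes through. Recall that M{\oe}glin's construction proceeds by peeling off the $<_\psi$-largest block $\rho\boxtimes S_{a_r}\boxtimes S_{b_r}$: one first inflates $a_r$ to a sufficiently large value so that the block becomes ``far out'' (harmless, since the inflated parameter then falls under the hypothesis of part (3)), applies a chain of partial Jacquet functors, and lands on a representation attached to a strictly smaller parameter, iterating on the number of blocks. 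Assertions (1) and (2) --- that $\pi(\psi,\ub{l},\ub{\eta},\eta_0)$ is irreducible or zero, that these representations exhaust $\Pi_\psi$ (compare Theorems \ref{arthur} and \ref{mult1}), and that the parametrizing data is rigid apart from the stated ambiguity when $l_i=b_i/2$ --- are exactly the content of \cite{Moe3}; I would state them with a citation, after recalling that in the present single-$\rho$, good-parity situation M{\oe}glin's description takes its simplest form.

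For (3): the separation hypothesis $a_i-b_i\geq a_{i-1}+b_{i-1}$ forces the blocks to be totally ordered and pairwise non-overlapping, so in the construction no cancellation occurs and each peeling step is an honest parabolic induction by the $i$-th Speh-type $\GL$-factor displayed in the statement, leaving behind the parameter in which $\rho\boxtimes S_{a_i}\boxtimes S_{b_i}$ is replaced by $\rho\boxtimes(S_{a_i-b_i+2l_i+1}+\dots+S_{a_i+b_i-2l_i-1})$. I would prove this by induction on $r$: the inductive hypothesis identifies the representation attached to the first $r-1$ blocks as the unique irreducible subrepresentation of the corresponding module over the tempered $\pi(\phi,\eta)$; then, using Zelevinsky's theorem \ref{zel} and the commutation of ladder and Speh representations supported on well-separated segments, one checks that the $r$-th factor may be placed in Langlands position and that $\pi(\psi,\ub{l},\ub{\eta},\eta_0)$ is the socle of the displayed induced representation. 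Nonvanishing is then automatic, since the socle of a nonzero induced representation is nonzero.

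Assertion (4) is the main point, and is a consistency statement about the construction: it says that the descent from a separated parameter down to the general $\psi$ via partial Jacquet functors may be carried out in stages, stopping at any intermediate $\psi'$ that still satisfies the separation hypothesis. The strategy is as follows. Let $\tilde\psi$ be the maximally inflated parameter in which every block has been pushed far out; both $\psi'$ and $\tilde\psi$ are separated, so part (3) gives explicit descriptions of $\pi(\psi',\ub{l},\ub{\eta},\eta_0)$ and $\pi(\tilde\psi,\ub{l},\ub{\eta},\eta_0)$ as socles of induced modules whose $\GL$-factors are Speh representations differing only in the top entries of their underlying segments. One applies the relevant chain of partial Jacquet functors to the $\tilde\psi$-module, computes it via Tadi{\'c}'s formula (Theorem \ref{tadic}) and the Jacquet module of ladder and Speh representations (Proposition \ref{JacGL}), and checks that the unique surviving summand is exactly the $\psi'$-module; composing the descent from $\tilde\psi$ to $\psi'$ with the descent from $\tilde\psi$ to $\psi$, and using left exactness of taking socles, then yields the displayed equality (with $\psi'=\tilde\psi$ one recovers part of M{\oe}glin's definition). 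The hard part will be the combinatorial verification that all ``cross terms'' vanish --- those in which a partial Jacquet functor intended for one block instead meets the Speh factor of another block, or the tempered piece $\pi(\phi,\eta)$; here the separation hypothesis is precisely what guarantees that the exponents being removed do not occur on the left of the Jacquet modules of the other factors, so that Proposition \ref{JacGL} forces a unique nonzero contribution. Alternatively, one can quote the corresponding shifting lemma directly from \cite{Moe2, Moe3} or \cite{X2}, carrying out the bookkeeping here only to the extent needed to match the conventions of this paper.
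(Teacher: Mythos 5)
Your proposal is consistent with the paper's own treatment: the paper gives no argument for Theorem \ref{moe} beyond the single line ``For the proof, see \cite[\S 8]{X2},'' and your outline is precisely a reconstruction of the structure of that reference (M{\oe}glin's inductive peeling construction via partial Jacquet functors, with Xu's normalization of the data $(\ub{l},\ub{\eta},\eta_0)$). In particular, your attribution of (1)--(2) to \cite{Moe3}, your socle/Langlands-position argument for (3), and your use of the shifting lemma of \cite{Moe2,Moe3,X2} for (4) all match the intended provenance, so the proposal takes essentially the same route as the paper.
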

For the proof, see \cite[\S 8]{X2}.

\begin{ex}\label{ex.xu1'}
Let us consider the situation of Example \ref{ex.xu1} with $x \geq 1$. 
Set $\psi = \phi + (\rho \boxtimes S_{2x} \boxtimes S_2)^b$. 
Write 
\[
\psi = \phi_0 + \left( \bigoplus_{i=1}^r \rho \boxtimes S_{a_i} \boxtimes S_{b_i} \right)
\]
as above such that $a_1 \leq \dots \leq a_r$. 
Define 
\begin{itemize}
\item
$\ub{l} \in \Z^r$ so that $l_i = 1$ if $b_i = 2$, and $l_i = 0$ if $b_i=1$; 
\item
$\ub{\eta} \in \{\pm1\}^r$ so that $\eta_i = \eta(\rho \boxtimes S_{a_i})$ if $b_i = 1$; 
\item
$\eta_0 \in \widehat{\AA_{\phi_0}}$ by $\eta_0 = \eta|\AA_{\phi_0}$.
\end{itemize}
Then it is easy to see that $\pi(\psi, \ub{l}, \ub{\eta}, \eta_0) = L(\Delta_\rho[x-1,-x]^b; \pi(\phi, \eta))$. 
\end{ex}

Similarly, we have the following proposition, which is a key observation.
\begin{prop}\label{ex.xu2}
Let $\phi \in \Phi_\gp(G)$ and $\eta \in \widehat{\Sc_\phi}$. 
Fix $x \in (1/2)\Z$ with $x \geq 1$. 
Suppose that $\rho \boxtimes S_{2x+1}$ is self-dual of the same type as $\phi$.
Assume that $m_{\phi}(\rho \boxtimes S_{2x+1}) \not=0$, $m_{\phi}(\rho \boxtimes S_{2x-1}) \not=0$
and 
\[
\eta(\rho \boxtimes S_{2x+1})\eta(\rho \boxtimes S_{2x-1}) = (-1)^{b+1}.
\]
Then $L(\Delta_\rho[x-1,-x]^b; \pi(\phi, \eta)) \in \Pi_\psi$ with 
\[
\psi = \phi - \rho \boxtimes (S_{2x+1}+S_{2x-1}) + (\rho \boxtimes S_{2x} \boxtimes S_2)^{b+1}.
\]
More precisely, 
when we write 
\[
\psi = \phi_0 + \left( \bigoplus_{i=1}^r \rho \boxtimes S_{a_i} \boxtimes S_{b_i} \right)
\]
as above such that $a_1 \leq \dots \leq a_r$, 
define 
\begin{itemize}
\item
$\ub{l} \in \Z^r$ so that $l_i = 0$ for any $1 \leq i \leq r$; 
\item
$\ub{\eta} \in \{\pm1\}^r$ so that 
\[
\eta_i = 
\left\{
\begin{aligned}
&\eta(\rho \boxtimes S_{a_i}) \iif b_i = 1, \\
&(-1)^{\#\{j < i \;|\; b_j = 2 \}} \eta(\rho \boxtimes S_{2x-1}) \iif b_i =2; 
\end{aligned}
\right. 
\]
\item
$\eta_0 \in \widehat{\AA_{\phi_0}}$ by $\eta_0 = \eta|\AA_{\phi_0}$.
\end{itemize}
Then $\pi(\psi, \ub{l}, \ub{\eta}, \eta_0) = L(\Delta_\rho[x-1,-x]^b; \pi(\phi, \eta))$.
In particular, if $D_{\rho|\cdot|^x}^{(k)}(L(\Delta_\rho[x-1,-x]^b; \pi(\phi, \eta))) \not= 0$, 
then $k \leq m_\phi(\rho \boxtimes S_{2x+1})-1$.
\end{prop}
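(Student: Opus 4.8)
The plan is to identify $L(\Delta_\rho[x-1,-x]^b;\pi(\phi,\eta))$ with the M\oe glin datum $\pi(\psi,\ub{l},\ub{\eta},\eta_0)$ described in the statement (with $\ub{l}=(0,\dots,0)$), and then to read off the bound on $k$ from Lemma~\ref{xu}. First I would check that $\psi\in\Psi_\gp(G)$: since $\rho\boxtimes S_{2x+1}$ is self-dual of the same type as $\phi$, so are $\rho\boxtimes S_{2x-1}$ (it has the same parity) and $\rho\boxtimes S_{2x}\boxtimes S_2$, so $\psi$ is again a sum of self-dual irreducibles of that common type, and its dimension has the same parity as that of $\phi$. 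Next I would verify that $(\ub{l},\ub{\eta},\eta_0)$ is admissible for M\oe glin's construction; the only non-formal point is the sign relation $\eta_0(z_{\phi_0})\prod_i(-1)^{[b_i/2]+l_i}\eta_i^{b_i}=1$. With $l_i=0$ throughout, the $b+1$ blocks with $b_i=2$ contribute $(-1)^{b+1}$, while the blocks with $b_i=1$ are exactly the constituents $\rho\boxtimes S_a\subset\psi$, so their product of $\eta$-values equals $\bigl(\prod_{\rho\boxtimes S_a\subset\phi}\eta(\rho\boxtimes S_a)\bigr)\eta(\rho\boxtimes S_{2x+1})\eta(\rho\boxtimes S_{2x-1})$ (values in $\{\pm1\}$). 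Since $\eta_0(z_{\phi_0})\prod_{\rho\boxtimes S_a\subset\phi}\eta(\rho\boxtimes S_a)=\eta(z_\phi)=1$ because $\eta\in\widehat{\Sc_\phi}$, the relation reduces exactly to the hypothesis $\eta(\rho\boxtimes S_{2x+1})\eta(\rho\boxtimes S_{2x-1})=(-1)^{b+1}$.

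To compute $\pi(\psi,\ub{l},\ub{\eta},\eta_0)$ I would pass to a dominant parameter. Fixing the order $<_\psi$ with $a_1\le\dots\le a_r$, choose $0=t_1\le t_2\le\dots\le t_r$ with consecutive gaps large enough that $\psi^\gg=\phi_0+\bigoplus_i\rho\boxtimes S_{a_i+2t_i}\boxtimes S_{b_i}$ still satisfies (a)--(c) and moreover $a_i+2t_i-b_i\ge a_{i-1}+2t_{i-1}+b_{i-1}$ for $i>1$. By Theorem~\ref{moe}(3) applied to $\psi^\gg$ with $\ub{l}=(0,\dots,0)$ all ladder factors are empty, so $\pi(\psi^\gg,\ub{l},\ub{\eta},\eta_0)=\pi(\phi^\gg,\eta^\gg)$ is tempered, where $\phi^\gg=\phi_0+\bigoplus_{b_i=1}\rho\boxtimes S_{a_i+2t_i}+\bigoplus_{b_i=2}\rho\boxtimes(S_{a_i+2t_i-1}+S_{a_i+2t_i+1})$ and $\eta^\gg$ takes opposite signs on the two members of each pair coming from a block with $b_i=2$. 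By Theorem~\ref{moe}(4), $\pi(\psi,\ub{l},\ub{\eta},\eta_0)$ is then obtained from $\pi(\phi^\gg,\eta^\gg)$ by the prescribed iterated partial Jacquet module that lowers each $a_i+2t_i$ back down to $a_i$.

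The last, and main, step is to evaluate that Jacquet module on the tempered side and match it with $L(\Delta_\rho[x-1,-x]^b;\pi(\phi,\eta))$. The blocks with $b_i=1$ simply restore $\rho\boxtimes S_{a_i+2t_i}$ to $\rho\boxtimes S_{a_i}$ by iterated use of Proposition~\ref{error}(1), each step being unobstructed because the shifts are generic. The $b+1$ blocks with $b_i=2$ are the delicate part: lowering a pair $\rho\boxtimes(S_{2x+2t_i-1}+S_{2x+2t_i+1})$ produces, while still generic, a Steinberg factor $\Delta_\rho[x-1,-x]$ over a tempered piece (governed by Proposition~\ref{error}(2) and the irreducibility criterion Proposition~\ref{irr01}), but as the pairs descend past one another and past the copies of $\rho\boxtimes S_{2x\pm1}$ already present in $\phi$, exactly one of the $b+1$ pairs recombines with those copies instead of contributing a Steinberg factor. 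The alternating signs $\eta_i=(-1)^{\#\{j<i\,:\,b_j=2\}}\eta(\rho\boxtimes S_{2x-1})$ together with $\eta(\rho\boxtimes S_{2x+1})\eta(\rho\boxtimes S_{2x-1})=(-1)^{b+1}$ are precisely what makes this bookkeeping close up, so that the surviving tempered piece is $\pi(\phi,\eta)$ and the remaining $b$ blocks survive as $\Delta_\rho[x-1,-x]$. Comparing Langlands data gives $\pi(\psi,\ub{l},\ub{\eta},\eta_0)=L(\Delta_\rho[x-1,-x]^b;\pi(\phi,\eta))$, hence $L(\Delta_\rho[x-1,-x]^b;\pi(\phi,\eta))\in\Pi_\psi$. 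Finally, since $m_\psi(\rho\boxtimes S_{2x+1})=m_\phi(\rho\boxtimes S_{2x+1})-1$ and the blocks $\rho\boxtimes S_{2x}\boxtimes S_2$ have $(a_i-b_i)/2=x-1\not=x$, Lemma~\ref{xu} yields $k\le m_\phi(\rho\boxtimes S_{2x+1})-1$ whenever $D_{\rho|\cdot|^x}^{(k)}(L(\Delta_\rho[x-1,-x]^b;\pi(\phi,\eta)))\not=0$.

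I expect the recombination bookkeeping in the third step to be the main obstacle: it requires a simultaneous induction that tracks both the tempered $L$-parameters and their characters through Proposition~\ref{error} (in particular the ``$m$ even'' case, which introduces factors of the form $L(\Delta_\rho[x-1,x];\,\cdot\,)$), and it is at exactly this point that Xu's combinatorial result on $A$-packets is meant to be invoked to control the order of descent and the resulting signs.
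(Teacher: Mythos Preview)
Your preliminary checks (that $\psi\in\Psi_\gp(G)$ and that the sign relation for $(\ub{l},\ub{\eta},\eta_0)$ reduces to the hypothesis $\eta(\rho\boxtimes S_{2x+1})\eta(\rho\boxtimes S_{2x-1})=(-1)^{b+1}$) are correct and useful. Your derivation of the final bound on $k$ from Lemma~\ref{xu} is also fine once membership in $\Pi_\psi$ is established.

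Where your argument diverges from the paper is in the identification step. You pass all the way to a fully dominant $\psi^\gg$, obtain a tempered $\pi(\phi^\gg,\eta^\gg)$, and then try to descend all $b+1$ blocks with $b_i=2$ simultaneously. As you yourself note, the resulting ``recombination bookkeeping'' is the crux, and your sketch (``exactly one of the $b+1$ pairs recombines'') is not yet a proof: the descent of one pair interferes with the $\rho\boxtimes S_{2x\pm1}$ produced by the others, and Proposition~\ref{error} alone does not let you read off which factors survive as $\Delta_\rho[x-1,-x]$ and which get absorbed. You are right that Xu's change-of-order result would be the tool here, but you have not actually applied it, and doing so for all $b+1$ blocks at once is nontrivial.

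The paper sidesteps this by induction on $b$. For $b\le 1$ the claim follows directly from Theorem~\ref{moe} and Proposition~\ref{error}. For $b\ge 2$ one modifies $\psi$ by shifting a \emph{single} $\rho\boxtimes S_{2x}\boxtimes S_2$ to $\rho\boxtimes S_{2x+2}\boxtimes S_2$ (and moving the summands with $a_i\ge 2x+1$ out of the way), applies the induction hypothesis to that $\psi'$, and then brings the one shifted block back down via Theorem~\ref{moe}(4). The key technical input you are missing is the irreducibility criterion of Proposition~\ref{irr01}: it guarantees that $\Delta_\rho[x-1,-x]\rtimes L(\Delta_\rho[x-1,-x]^{1-\kappa};\pi(\phi',\eta'))$ is irreducible, which lets one replace the standard module by an explicit irreducible induction $L(\Delta_\rho[x-1,-x],\Delta_\rho[x,-(x-1)])^{(b-2+\kappa)/2}\rtimes(\cdot)$ and then pull the single Jacquet step $J_2\circ J_1$ past it. The final identification then reduces to the known formula $J_2\circ J_1(\pi(\phi',\eta'))=L(\Delta_\rho[x,-(x-1)];\pi(\phi,\eta))$ from \cite[Theorem~4.2]{At}. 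In short, the paper trades your global descent for a one-block-at-a-time induction, and the price of admission is Proposition~\ref{irr01} rather than Xu's combinatorics; this makes the bookkeeping entirely explicit.
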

\begin{proof}
When $b=0$ or $b=1$, it follows from Theorem \ref{moe} together with Proposition \ref{error}.
Suppose that $b \geq 2$. 
Take
\begin{align*}
\psi' = \psi 
&- \rho \boxtimes S_{2x} \boxtimes S_2 + \rho \boxtimes S_{2x+2} \boxtimes S_2 
-\left(\bigoplus_{\substack{1 \leq i \leq r \\ a_i \geq 2x+1}} \rho \boxtimes S_{a_i}\right) 
+\left(\bigoplus_{\substack{1 \leq i \leq r \\ a_i \geq 2x+1}} \rho \boxtimes S_{a'_i}\right)
\end{align*}
such that $a'_r > a'_{r-1} > \cdots$ are sufficiently large
so that $m_{\psi'}(\rho \boxtimes S_{2x+1}) = m_{\psi'}(\rho \boxtimes S_{2x+3}) = 0$.
By Theorem \ref{moe} (4), we have
$\pi(\psi, \ub{l}, \ub{\eta}, \eta_0) = J_2 \circ J_1(\pi(\psi', \ub{l}, \ub{\eta}, \eta_0))$ with 
\begin{align*}
J_1 &= \Jac_{\rho|\cdot|^{x}, \rho|\cdot|^{x+1}}, \\
J_2 &= \left( \Jac_{\rho|\cdot|^{\half{a'_{r}-1}}, \dots, \rho|\cdot|^{\half{a_{r}+1}}} \right)
\circ 
\left( \Jac_{\rho|\cdot|^{\half{a'_{r-1}-1}}, \dots, \rho|\cdot|^{\half{a_{r-1}+1}}} \right)
\circ \cdots.
\end{align*}
Since $l_{i} = 0$ for any $i$, we may apply the induction hypothesis to $\pi(\psi', \ub{l}, \ub{\eta}, \eta_0)$. 
Hence $\pi(\psi', \ub{l}, \ub{\eta}, \eta_0) = L(\Delta_{\rho}[x-1, -x]^{b-1}; \pi(\phi', \eta'))$, 
where 
\begin{align*}
\phi' 
&= 
\psi' - (\rho \boxtimes S_{2x} \boxtimes S_2)^b 
- \rho \boxtimes S_{2x+2} \boxtimes S_2 + \rho \boxtimes (S_{2x-1} + S_{2x+1}^2 + S_{2x+3})
\\&=
\phi + \rho \boxtimes (S_{2x+1} + S_{2x+3}) 
-\left(\bigoplus_{\substack{1 \leq i \leq r \\ a_i \geq 2x+1}} \rho \boxtimes S_{a_i}\right) 
+\left(\bigoplus_{\substack{1 \leq i \leq r \\ a_i \geq 2x+1}} \rho \boxtimes S_{a'_i}\right).
\end{align*}
Note that $m_{\phi'}(\rho \boxtimes S_{2x+1}) = 2$, $m_{\phi'}(\rho \boxtimes S_{2x+3}) = 1$
and $\eta'(\rho \boxtimes S_{2x-1}) \eta'(\rho \boxtimes S_{2x+1}) = (-1)^b$. 
\par

Take $\kappa \in \{0,1\}$ such that $b \equiv \kappa \bmod 2$. 
By Proposition \ref{irr01}, 
$\Delta_\rho[x-1,-x] \rtimes L(\Delta_{\rho}[x-1, -x]^{1-\kappa}; \pi(\phi', \eta'))$ is irreducible. 
Note that any irreducible subquotient of $\Delta_\rho[x-1,-x]^{2c}$ is of the form
\[
L(\Delta_\rho[x-1,-x]; \Delta_\rho[x,-(x-1)])^{\alpha} 
\times 
(\Delta_\rho[x,-x] \times \Delta_\rho[x-1,-(x-1)])^\beta
\]
with $\alpha+\beta = c$.
Considering the Langlands data for $\pi(\psi', \ub{l}, \ub{\eta}, \eta_0)$, 
we see that it is a subrepresentation of 
\[
L(\Delta_\rho[x-1,-x]; \Delta_\rho[x,-(x-1)])^{\half{b-2+\kappa}} 
\rtimes L(\Delta_\rho[x-1,-x]^{1-\kappa}; \pi(\phi', \eta')). 
\]
Note that this induced representation is irreducible. 
Indeed, it is unitary so that it is semisimple, 
but since it is subrepresentation of a standard module, it has a unique irreducible subrepresentation.
Hence $\pi(\psi, \ub{l}, \ub{\eta}, \eta_0)$ is equal to 
\[
L(\Delta_\rho[x-1,-x]; \Delta_\rho[x,-(x-1)])^{\half{b-2+\kappa}} 
\rtimes 
J_2 \circ J_1 (L(\Delta_\rho[x,-(x-1)]^{1-\kappa}; \pi(\phi', \eta'))). 
\]
Since $\Jac_{\rho|\cdot|^{x+1}}(\pi(\phi', \eta')) = 0$, 
we have
\[
J_2 \circ J_1 (L(\Delta_\rho[x,-(x-1)]^{1-\kappa}; \pi(\phi', \eta')))
= L(\Delta_\rho[x,-(x-1)]^{1-\kappa}; J_2 \circ J_1(\pi(\phi', \eta'))).
\]
By \cite[Theorem 4.2]{At}, we have 
\[
J_2 \circ J_1(\pi(\phi', \eta')) = L(\Delta_\rho[x,-(x-1)]; \pi(\phi, \eta)).
\]
Therefore, 
\begin{align*}
\pi(\psi, \ub{l}, \ub{\eta}, \eta_0) 
&= 
L(\Delta_\rho[x-1,-x]; \Delta_\rho[x,-(x-1)])^{\half{b-2+\kappa}} 
\rtimes 
L(\Delta_\rho[x,-(x-1)]^{2-\kappa}; \pi(\phi, \eta))
\\&\cong
L(\Delta_\rho[x,-(x-1)]^{b}; \pi(\phi, \eta)), 
\end{align*}
as desired. 
\end{proof}

When $x=1$, we have the following more general version. 
\begin{prop}\label{ex.xu3}
Let $\phi \in \Phi_\gp(G)$ and $\eta \in \widehat{\Sc_\phi}$. 
Suppose that $\rho$ is self-dual of the same type as $\phi$. 
Assume that $m_\phi(\rho) \not= 0 $, $m_\phi(\rho \boxtimes S_3) \not= 0$ 
and 
\[
\eta(\rho) \eta(\rho \boxtimes S_3) = (-1)^{b+1}.
\]
If $a \leq m_\phi(\rho)-1$, then $L((\rho|\cdot|^{-1})^a, \Delta_\rho[0,-1]^b; \pi(\phi, \eta)) \in \Pi_\psi$
with 
\begin{align*}
\psi = \phi &- \rho^a + (\rho \boxtimes S_1 \boxtimes S_3)^a 
\\&- (\rho + \rho \boxtimes S_3) + (\rho \boxtimes S_2 \boxtimes S_2)^{b+1}.
\end{align*}
In particular, if $D_{\rho|\cdot|^1}^{(k)}(\pi) \not= 0$, then $k \leq m_\phi(\rho \boxtimes S_{3})-1$.
\end{prop}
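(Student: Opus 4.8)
The plan is to reduce the statement to Proposition~\ref{ex.xu2} (the case $x=1$) by peeling off the front factor $(\rho|\cdot|^{-1})^a$, and then to recover the membership in $\Pi_\psi$ from M{\oe}glin's construction by attaching, for each of the $a$ removed copies of $\rho$, a block $\rho\boxtimes S_1\boxtimes S_3$ with M{\oe}glin parameter $l=1$. First I would record the Langlands-theoretic reduction: since the segments $\rho|\cdot|^{-1}=\Delta_\rho[-1,-1]$ and $\Delta_\rho[0,-1]$ have central exponents $-2<-1$, hence are in standard order, transitivity of the Langlands classification gives that $\pi$ is the unique irreducible subrepresentation of $(\rho|\cdot|^{-1})^a\rtimes\sigma_1$, where $\sigma_1:=L(\Delta_\rho[0,-1]^b;\pi(\phi,\eta))$; moreover a short Jacquet-module computation via Theorem~\ref{tadic} (as in the proof of Proposition~\ref{highest}) shows $\Jac_{\rho|\cdot|^{-1}}(\sigma_1)=0$, so $D^{(a)}_{\rho|\cdot|^{-1}}(\pi)=\sigma_1$ is the highest derivative. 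By Proposition~\ref{ex.xu2} with $x=1$ (its hypotheses hold since $m_\phi(\rho)\neq0$, $m_\phi(\rho\boxtimes S_3)\neq0$ and $\eta(\rho)\eta(\rho\boxtimes S_3)=(-1)^{b+1}$) we have $\sigma_1\in\Pi_{\psi_1}$ with $\psi_1=\phi-\rho-\rho\boxtimes S_3+(\rho\boxtimes S_2\boxtimes S_2)^{b+1}$, and that proof exhibits the M{\oe}glin data $\sigma_1=\pi(\psi_1,\ub{l},\ub{\eta},\eta_0)$ with $\ub{l}=0$.

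Next I would feed this into M{\oe}glin's construction for $\psi$. Note $\psi=\psi_1-\rho^a+(\rho\boxtimes S_1\boxtimes S_3)^a$ and $m_{\psi_1}(\rho)=m_\phi(\rho)-1\geq a$, so this replacement of $a$ copies of $\rho=\rho\boxtimes S_1\boxtimes S_1$ by $\rho\boxtimes S_1\boxtimes S_3$ is legitimate. Let $(\ub{l}',\ub{\eta}',\eta_0')$ be obtained from $(\ub{l},\ub{\eta},\eta_0)$ by setting $l_i'=1$ and $\eta_i'=\eta_i$ on the $a$ new blocks and leaving all other entries unchanged; the defining sign identity is preserved, since a block $\rho\boxtimes S_1\boxtimes S_3$ with $l=1$ has $[b_i/2]+l_i'=2$ and $(\eta_i')^{b_i}=\eta_i$, matching the contribution $[b_i/2]+l_i=0$, $\eta_i^{b_i}=\eta_i$ of the block $\rho$ it replaces. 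I would then argue $\pi(\psi,\ub{l}',\ub{\eta}',\eta_0')=L((\rho|\cdot|^{-1})^a;\sigma_1)=\pi$: in M{\oe}glin's explicit description (Theorem~\ref{moe}(3), reached after the same preliminary deformation and Theorem~\ref{moe}(4)-descent used in the proof of Proposition~\ref{ex.xu2}) each new block $\rho\boxtimes S_1\boxtimes S_3$ with $l=1$ contributes precisely the inducing factor $\Delta_\rho[-1,-1]=\rho|\cdot|^{-1}$ and returns $\rho\boxtimes S_1=\rho$ to the tempered part, so that the inducing representation and the base tempered representation attached to $\pi(\psi,\ub{l}',\ub{\eta}',\eta_0')$ are exactly those of $L((\rho|\cdot|^{-1})^a;\sigma_1)$. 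This gives $\pi\in\Pi_\psi$.

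The main obstacle is exactly this last identification: controlling M{\oe}glin's construction for a parameter that contains the components $\rho\boxtimes S_1\boxtimes S_3$ with $a_i<b_i$, which lie outside the range normalized in \S\ref{s.moe}, and which therefore forces the deformation-and-descent and the attendant Jacquet-module bookkeeping — one must check that the $a$ inserted strings $\rho|\cdot|^{-1}$, the $b$ near-identical Steinberg blocks $\Delta_\rho[0,-1]$, and the tempered part do not interfere and that no spurious irreducible constituents arise, using Proposition~\ref{JacGL}, Theorems~\ref{tadic} and~\ref{zel}, the irreducibility statements of Proposition~\ref{irr01}, M{\oe}glin's multiplicity-one result (Theorem~\ref{mult1}), and Proposition~\ref{error}, in close parallel with the proof of Proposition~\ref{ex.xu2}. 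Once $\pi\in\Pi_\psi$ is established, the final assertion is immediate from Lemma~\ref{xu} applied to $\psi$ at $x=1$: the only components $\rho_i\boxtimes S_{a_i}\boxtimes S_{b_i}$ of $\psi$ with $\rho_i\cong\rho$ and $(a_i-b_i)/2=1$ are the $m_\phi(\rho\boxtimes S_3)-1$ tempered blocks $\rho\boxtimes S_3$ remaining in $\phi-\rho-\rho\boxtimes S_3$ (the blocks $\rho\boxtimes S_1\boxtimes S_3$ have $(a_i-b_i)/2=-1$ and $\rho\boxtimes S_2\boxtimes S_2$ has $(a_i-b_i)/2=0$), whence $k\leq m_\phi(\rho\boxtimes S_3)-1$.
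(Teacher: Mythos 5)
Your proposal is correct and essentially reconstructs what the paper alludes to: the paper's ``proof'' of this proposition is the single remark ``The proof is the same as the one of Proposition~\ref{ex.xu2}, but it requires M{\oe}glin's construction more generally since $\psi$ contains $\rho\boxtimes S_1\boxtimes S_3$.\ We omit the detail.'' Your outline is therefore, in effect, a fleshed-out version of the intended argument, and the individual steps you describe are sound: the Langlands-data observation that $\pi\hookrightarrow(\rho|\cdot|^{-1})^a\rtimes\sigma_1$ with $\Jac_{\rho|\cdot|^{-1}}(\sigma_1)=0$ (the latter because the left Jacquet module of $\Delta_\rho[0,-1]^b$ produces only $\rho|\cdot|^0$, the contragredient contributions produce $\rho|\cdot|^1$, and $\pi(\phi,\eta)$ is tempered); the application of Proposition~\ref{ex.xu2} at $x=1$ to identify $\sigma_1=\pi(\psi_1,\ub{l},\ub{\eta},\eta_0)$ with $\ub{l}=0$; the sign check that replacing a tempered block $\rho\boxtimes S_1\boxtimes S_1$ ($[b_i/2]+l_i=0$) by $\rho\boxtimes S_1\boxtimes S_3$ with $l_i'=1$ ($[b_i/2]+l_i'=2$) and $\eta_i'=\eta_i$ preserves M{\oe}glin's defining identity; the computation that a $\rho\boxtimes S_1\boxtimes S_3$ block with $l=1$ contributes $\Delta_\rho[-1,-1]=\rho|\cdot|^{-1}$ to the inducing data and returns $\rho$ to the tempered part; and the count at the end showing that the only constituents $\rho\boxtimes S_{a_i}\boxtimes S_{b_i}$ of $\psi$ with $(a_i-b_i)/2=1$ are the $m_\phi(\rho\boxtimes S_3)-1$ surviving tempered $\rho\boxtimes S_3$ blocks.

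Where you differ slightly from the route the paper suggests: a literal repeat of the proof of Proposition~\ref{ex.xu2} would run the induction on $b$ with the $a$ blocks $\rho\boxtimes S_1\boxtimes S_3$ present throughout, whereas you instead invoke Proposition~\ref{ex.xu2} as a black box to settle $a=0$ and then graft on the $a$ blocks in one step. This two-step organization is cleaner, since it isolates exactly where the new difficulty lives; what it does not avoid (nor could it) is the real technical obstruction, which you identify correctly: the framework recalled in \S\ref{s.moe} is normalized under the assumption $a_i\geq b_i$, and $\rho\boxtimes S_1\boxtimes S_3$ violates it, so one needs the deformation/descent mechanism of Theorem~\ref{moe}(4) extended to this case (i.e.\ M{\oe}glin's construction in its full generality from \cite{Moe1,Moe2,X2}) together with the irreducibility and Jacquet-module bookkeeping you list. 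The paper omits this detail, and your proposal flags it honestly rather than resolving it, so your argument is at the same level of completeness as the paper's. The final application of Lemma~\ref{xu} is correct as stated.
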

The proof is the same as the one of Proposition \ref{ex.xu2}, 
but it requires M{\oe}glin's construction more generally 
since $\psi$ contains $\rho \boxtimes S_1 \boxtimes S_3$. 
We omit the detail. 

\section{Derivatives of certain representations}\label{s.main}
Let $\phi \in \Phi_\gp(G)$ and $\eta \in \widehat{\Sc_\phi}$. 
Fix $x \in (1/2)\Z$ with $x > 0$ such that 
$\rho \boxtimes S_{2x+1}$ is self-dual of the same type as $\phi$. 
Let $\pi = L((\rho|\cdot|^{-x})^a, \Delta_\rho[x-1,-x]^b; \pi(\phi,\eta))$. 
By Jantzen's algorithm recalled in \S \ref{alg}, 
the computation of the highest derivatives of arbitrary irreducible representations
is reduced to the one of $D_{\rho|\cdot|^x}^{(k)}(\pi)$. 
In this section, we give an algorithm to compute this for $x \geq 1$.
\par

\subsection{Statements}
The following is an algorithm. 
\begin{thm}\label{main}
Suppose that $x \geq 1$. 
\begin{enumerate}
\item
If $m_{\phi}(\rho \boxtimes S_{2x+1}) \not=0$, $m_{\phi}(\rho \boxtimes S_{2x-1}) \not=0$
and $\eta(\rho \boxtimes S_{2x+1})\eta(\rho \boxtimes S_{2x-1}) = (-1)^{b+1}$,  
set 
\[
\psi = \phi - \rho \boxtimes (S_{2x+1}+S_{2x-1}) + (\rho \boxtimes S_{2x} \boxtimes S_2)^{b+1}.
\]
Otherwise, set $\psi = \phi + (\rho \boxtimes S_{2x} \boxtimes S_2)^b$. 
Then $L(\Delta_\rho[x-1,-x]^b; \pi(\phi,\eta)) \in \Pi_\psi$. 

\item
Put $m = m_{\psi}(\rho \boxtimes S_{2x+1})$, $m' = m_{\psi}(\rho \boxtimes S_{2x-1})$, 
and 
\[
l = \min\{a-m',0\}. 
\]
Then $D_{\rho|\cdot|^x}^{(l+m)}(L((\rho|\cdot|^{-x})^a, \Delta_\rho[x-1,-x]^b; \pi(\phi,\eta)))$
is the highest derivative. 
Moreover, it is a unique irreducible subrepresentation of 
\[
(\rho|\cdot|^{-x})^{a-l} \rtimes D_{\rho|\cdot|^x}^{(m)}(L(\Delta_\rho[x-1,-x]^b; \pi(\phi,\eta))). 
\]

\item
If we set $\psi' = \psi - (\rho \boxtimes S_{2x+1})^m + (\rho \boxtimes S_{2x-1})^m$, 
then $D_{\rho|\cdot|^x}^{(m)}(L(\Delta_\rho[x-1,-x]^b; \pi(\phi,\eta))) \in \Pi_{\psi'}$. 
In particular, if we write
\[
\psi = \phi_0 + \left( \bigoplus_{i=1}^r \rho \boxtimes S_{a_i} \boxtimes S_{b_i} \right), 
\quad
\psi' = \phi_0 + \left( \bigoplus_{i=1}^r \rho \boxtimes S_{a'_i} \boxtimes S_{b'_i} \right)
\]
such that 
$a_1 \leq \dots \leq a_r$, $a'_1 \leq \dots \leq a'_r$, and 
such that $\phi_0 \not\supset \rho \boxtimes S_d$ for any $d > 0$, 
then 
\begin{align*}
\pi(\psi, \ub{l}, \ub{\eta}, \eta_0) &= L(\Delta_\rho[x-1,-x]^b; \pi(\phi,\eta)), \\
\pi(\psi', \ub{l'}, \ub{\eta'}, \eta'_0) &= D_{\rho|\cdot|^x}^{(m)}(L(\Delta_\rho[x-1,-x]^b; \pi(\phi,\eta)))
\end{align*}
for some data. 
These are related as follows:
\begin{itemize}
\item
$\eta_0 = \eta'_0 = \eta|\AA_{\phi_0}$; 
\item
if $b_i'=1$, then $l_i' = 0$; 
\item
if $b'_i = b_j = 2$, then $l'_i = l_j$ if and only if $m$ is even; 
\item
if $b'_i = 1$ and $a'_i \not= 2x-1$, then $\eta'_i = \eta(\rho \boxtimes S_{a_i})$; 
\item
if $b'_i = 1$ and $a'_i = 2x-1$, or if $b'_i = 2$ and $l'_i = 0$, 
then 
\[
\eta'_i = \eta(\rho \boxtimes S_{2x-1})
\quad\text{or} \quad
\eta'_i = (-1)^{m_\psi(\rho \boxtimes S_{2x} \boxtimes S_2)} \eta(\rho \boxtimes S_{2x+1}).
\]
These values are equal to each other if $\phi \supset \rho \boxtimes (S_{2x-1}+S_{2x+1})$.
\end{itemize}

\item
By Example \ref{ex.xu1'} and Proposition \ref{ex.xu2}, 
we can obtain the Langlands data for $D_{\rho|\cdot|^x}^{(m)}(L(\Delta_\rho[x-1,-x]^b; \pi(\phi,\eta)))$.
\end{enumerate}
\end{thm}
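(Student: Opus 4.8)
The plan is to deal first with the case $a=0$, which carries all the real content, and then to reduce the general case to it by a Jacquet-module computation. Part (1) is already available: in the generic case it is Example \ref{ex.xu1'}, and in the exceptional case---where $\phi$ contains both $\rho\boxtimes S_{2x+1}$ and $\rho\boxtimes S_{2x-1}$ with $\eta(\rho\boxtimes S_{2x+1})\eta(\rho\boxtimes S_{2x-1})=(-1)^{b+1}$---it is Proposition \ref{ex.xu2}; in both cases one even obtains the explicit M{\oe}glin datum $(\ub l,\ub\eta,\eta_0)$ with $L(\Delta_\rho[x-1,-x]^b;\pi(\phi,\eta))=\pi(\psi,\ub l,\ub\eta,\eta_0)$, all $l_i$ lying in $\{0,1\}$, so one simply quotes these. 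Note that $\rho$ is automatically self-dual, since $\rho\boxtimes S_{2x+1}$ is.

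Write $\sigma=L(\Delta_\rho[x-1,-x]^b;\pi(\phi,\eta))$ and $m=m_\psi(\rho\boxtimes S_{2x+1})$. Since the only summands $\rho'\boxtimes S_{a_i}\boxtimes S_{b_i}$ of $\psi$ with $\rho'\cong\rho$ and $(a_i-b_i)/2=x$ are the $m$ copies of $\rho\boxtimes S_{2x+1}$, Lemma \ref{xu} gives $D_{\rho|\cdot|^x}^{(k)}(\sigma)=0$ for $k>m$; and since $\psi':=\psi-(\rho\boxtimes S_{2x+1})^m+(\rho\boxtimes S_{2x-1})^m$ satisfies $m_{\psi'}(\rho\boxtimes S_{2x+1})=0$, the same lemma shows $\Jac_{\rho|\cdot|^x}$ annihilates every member of $\Pi_{\psi'}$. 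So it remains to show $D_{\rho|\cdot|^x}^{(m)}(\sigma)\neq0$, that it lies in $\Pi_{\psi'}$, and to identify it---this is parts (3) and (4). I would argue with M{\oe}glin's construction: using Theorem \ref{moe}(4) one moves the blocks of $\psi$ and of $\psi'$ to a well-spread position $a_i-b_i\ge a_{i-1}+b_{i-1}$, where Theorem \ref{moe}(3) realizes $\pi(\cdot)$ as an explicit Langlands subrepresentation of a product of ladder representations times a tempered representation; on that model the $m$-fold partial Jacquet module $\Jac_{\rho|\cdot|^x}$ is computed using Proposition \ref{JacGL} for the $\GL$-factors and the Jacquet modules of tempered representations (Proposition \ref{error} and \cite{At}), and one sees that $m$ applications of $\Jac_{\rho|\cdot|^x}$ convert each block $\rho\boxtimes S_{2x+1}$ into $\rho\boxtimes S_{2x-1}$ while leaving the remaining data essentially intact. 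The precise passage of M{\oe}glin data in part (3)---in particular the rule $\ub\eta\mapsto\ub{\eta'}$ and the dependence of $\ub{l'}$ on the parity of $m$---is exactly the content of Xu's combinatorial result \cite[Theorem 6.1]{X3} (together with Proposition \ref{error} for the tempered part); part (4) then follows by feeding $(\psi',\ub{l'},\ub{\eta'},\eta_0')$ into Example \ref{ex.xu1'} or Proposition \ref{ex.xu2} applied to $\psi'$ in place of $\psi$, which converts it back into Langlands data.

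For part (2) with $a\ge1$: from $\sigma\hookrightarrow(\rho|\cdot|^x)^m\rtimes\sigma'$, where $\sigma'=D_{\rho|\cdot|^x}^{(m)}(\sigma)$ is irreducible (Proposition \ref{highest}, as $x\neq0$), and the irreducibility of $\rho|\cdot|^{-x}\times\rho|\cdot|^x$ (Theorem \ref{zel}, since $2x\ge2$), one gets $\pi\hookrightarrow(\rho|\cdot|^{-x})^a\rtimes\sigma\hookrightarrow(\rho|\cdot|^x)^m\rtimes((\rho|\cdot|^{-x})^a\rtimes\sigma')$. Tadi{\'c}'s formula (Theorem \ref{tadic}) and the self-duality of $\rho$ give $\Jac_{\rho|\cdot|^x}(\rho|\cdot|^{-x}\rtimes\tau)=\tau\oplus(\rho|\cdot|^{-x}\rtimes\Jac_{\rho|\cdot|^x}(\tau))$ for any $\tau$; iterating this, and using that $\sigma'$ admits exactly $m'=m_\psi(\rho\boxtimes S_{2x-1})$ copies of $\rho|\cdot|^{-x}$ ``in front'' (read off from its M{\oe}glin datum, equivalently from its Langlands data obtained in part (4)), one checks that $\max\{m'-a,0\}$ of the $m$ desired applications of $\Jac_{\rho|\cdot|^x}$ are already absorbed by the inserted $(\rho|\cdot|^{-x})^a$. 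Hence $D_{\rho|\cdot|^x}^{(l+m)}(\pi)$ with $l=\min\{a-m',0\}$ is the highest derivative, and Proposition \ref{highest}---applied to $\pi$ and then to $(\rho|\cdot|^{-x})^{a-l}\rtimes\sigma'$---identifies it as the unique irreducible subrepresentation of the latter.

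The hard part is the M{\oe}glin/Xu bookkeeping inside the core: keeping M{\oe}glin's construction explicit enough to trace the $m$-fold $\Jac_{\rho|\cdot|^x}$ through the parametrization $(\psi,\ub l,\ub\eta,\eta_0)$, and pinning down $\ub{\eta'}$ and $\ub{l'}$ in every case distinction (parity of $m$; whether $\phi$ contains $\rho\boxtimes S_{2x-1}$ and $\rho\boxtimes S_{2x+1}$, which governs the ambiguity modulo $z_\psi$). This is precisely where \cite[Theorem 6.1]{X3} and Proposition \ref{error} do the work; everything else reduces to routine manipulation of standard modules via Zelevinsky's criterion and Proposition \ref{highest}.
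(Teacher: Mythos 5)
Your treatment of the case $a=0$ (parts (1), (3), (4) and the core bound) is in the right spirit and matches the paper's route: identify $L(\Delta_\rho[x-1,-x]^b;\pi(\phi,\eta))$ as a M{\oe}glin element $\pi(\psi,\ub{l},\ub{\eta},\eta_0)$, spread $\psi$ out via Theorem~\ref{moe}(4), compare the two sequences of $\Jac$'s (stopping at $\rho|\cdot|^{x+1}$ versus at $\rho|\cdot|^x$), and use Xu's change-of-order result \cite[Theorem 6.1]{X3} to read off $(\ub{l'},\ub{\eta'})$; the embedding $\pi\hookrightarrow(\rho|\cdot|^x)^m\rtimes\pi'$ is then obtained, in the paper, from \cite[Corollary 5.4, Lemma 5.7]{X1} together with Lemma~\ref{xu} (you should cite these explicitly; Proposition~\ref{JacGL} plus Proposition~\ref{error} on the ``well-spread'' model is not by itself enough to control irreducibility of the iterated partial Jacquet module).

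Where the argument breaks down is your treatment of part (2) with $a\ge1$. First, a bookkeeping issue: the $l=\min\{a-m',0\}$ in the theorem statement is inconsistent with Corollary~\ref{ab} and with Propositions~\ref{x>1}--\ref{x=1}; those give $k=m+\max\{a-m',0\}$, i.e.\ the correct $l$ is $\max\{a-m',0\}$ (the $\min$ appears to be a typo). Your argument is built to ``explain'' the $\min$ version --- you claim $\max\{m'-a,0\}$ of the $m$ applications are \emph{absorbed}, so the derivative count drops --- and this is backwards. From your own Tadi{\'c} identity $\Jac_{\rho|\cdot|^x}(\rho|\cdot|^{-x}\rtimes\tau)=\tau+\rho|\cdot|^{-x}\rtimes\Jac_{\rho|\cdot|^x}(\tau)$ one sees that each $\rho|\cdot|^{-x}$ \emph{contributes} a copy of $\rho|\cdot|^x$ to the Jacquet module, via the $(\rho|\cdot|^{-x})^\vee=\rho|\cdot|^x$ term in $M^*(\rho|\cdot|^{-x})$: the derivative order therefore \emph{increases} with $a$, never decreases. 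Moreover, the assertion that $\sigma'=D_{\rho|\cdot|^x}^{(m)}(\sigma)$ ``admits exactly $m'$ copies of $\rho|\cdot|^{-x}$ in front'' is false --- look at Corollary~\ref{ab} with $a=0$: there is no $(\rho|\cdot|^{-x})^\bullet$ factor at all in $\sigma'$. So your sketch does not establish the correct formula.

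Finally, the paper's reduction of the $a\ge1$ case is genuinely different from what you propose, and you are missing the subtlety that splits it into two regimes. For $x>1$ the author uses Jantzen's strategy \cite[\S3.4]{J3}: compute the highest derivatives $\alpha,\beta,\gamma$ at $\rho|\cdot|^{x-1},\rho|\cdot|^x,\rho|\cdot|^{x-1}$ in turn and apply the identity $k=\max\{\beta-\alpha-\gamma,0\}+\gamma$, then run eight case distinctions via Corollary~\ref{ab} applied to the smaller $x$. For $x=1$ this strategy is unavailable (there is no $\rho|\cdot|^{x-1}$ to peel off), and the paper instead proves a dedicated irreducibility statement, Proposition~\ref{irr1} (built on the $\psi$ involving $\rho\boxtimes S_1\boxtimes S_3$, i.e.\ Proposition~\ref{ex.xu3}), and uses it to pin down $k$ by a two-sided inequality. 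Your proposal does not see this $x=1$ wrinkle at all, and the Grothendieck-group Tadi{\'c} manipulation you sketch does not, as written, extract the highest derivative of the Langlands \emph{sub}representation $\pi$ from that of the full induced representation $(\rho|\cdot|^x)^m\times(\rho|\cdot|^{-x})^a\rtimes\sigma'$.
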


We can also write down this theorem more explicitly. 
Let $\kappa \in \{0,1\}$ such that $\kappa \equiv b \bmod 2$. 
Set $m_{d} = m_\phi(\rho \boxtimes S_{d})$ for $d \geq 1$.
For $d \geq 3$, define 
\[
\delta_d = \left\{
\begin{aligned}
&1	\iif \text{$m_dm_{d-2} \not= 0$ and $\eta(\rho \boxtimes S_{d}) \not= \eta(\rho \boxtimes S_{d-2})$}, \\
&0	\other.
\end{aligned}
\right. 
\]
Put
\[
\phi' = \phi - (\rho \boxtimes S_{2x+1})^{m_{2x+1}} + (\rho \boxtimes S_{2x-1})^{m_{2x+1}}.
\] 
Let $\eta' \in \widehat{\AA_{\phi'}}$ be the pullback of $\eta$ via the canonical map $\AA_{\phi'} \hookrightarrow \AA_\phi \twoheadrightarrow \Sc_{\phi}$. 
The following is a reformulation of Theorem \ref{main}, 
which follows from Example \ref{ex.xu1'} and Proposition \ref{ex.xu2} immediately. 

\begin{cor}\label{ab}
The notation is as above. 
Let $\pi = L((\rho|\cdot|^{-x})^a, \Delta_\rho[x-1,-x]^b; \pi(\phi,\eta))$. 
Then $D_{\rho|\cdot|^x}^{(k)}(\pi)$ is the highest derivative with 
\[
k = 
\left\{
\begin{aligned}
&\max\{a-m_{2x-1}+m_{2x+1}, m_{2x+1}-1\} 
\iif m_{2x-1}m_{2x+1} \not=0, \kappa \not= \delta_{2x+1}, \\
&\max\{a-m_{2x-1}+m_{2x+1}, m_{2x+1}\} 
\other.
\end{aligned}
\right. 
\]
Moreover: 
\begin{enumerate}
\item
Suppose that $m_{2x+1} = 0$. 
Set $l = \max\{a-m_{2x-1}, 0\}$. 
Then 
\[
D_{\rho|\cdot|^{x}}^{(l)}(\pi) = L((\rho|\cdot|^{-x})^{a-l}, \Delta_\rho[x-1,-x]^b; \pi(\phi,\eta)).
\]

\item
Suppose that $m_{2x-1}m_{2x+1} \not= 0$. 
Set 
\[
l = \left\{
\begin{aligned}
&\max\{a-m_{2x-1},0\} \iif \kappa = \delta_{2x+1}, \\
&\max\{a-m_{2x-1}+1,0\} \iif \kappa \not= \delta_{2x+1}.
\end{aligned}
\right. 
\]
\begin{enumerate}
\item
If $\kappa \not= \delta_{2x+1}$, and if $m_{2x+1}$ is odd, then
\[
D_{\rho|\cdot|^x}^{(l+m_{2x+1}-1)}(\pi) 
= 
L\left((\rho|\cdot|^{-x})^{a-l}, \Delta_\rho[x-1,-x]^b; \pi(\phi' + \rho \boxtimes (S_{2x+1}-S_{2x-1}), \eta')\right).
\]

\item
If $\kappa \not= \delta_{2x+1}$, and if $m_{2x+1}$ is even, then
\[
D_{\rho|\cdot|^x}^{(l+m_{2x+1}-1)}(\pi) 
= 
L\left((\rho|\cdot|^{-x})^{a-l}, \Delta_\rho[x-1,-x]^{b+1}; \pi(\phi'-(\rho \boxtimes S_{2x-1})^2, \eta')\right).
\]

\item
If $\kappa = \delta_{2x+1}$, and if $m_{2x+1}$ is odd and $b>0$, then
\[
D_{\rho|\cdot|^x}^{(l+m_{2x+1})}(\pi) 
= 
L\left((\rho|\cdot|^{-x})^{a-l}, \Delta_\rho[x-1,-x]^{b-1}; 
\pi(\phi' + \rho\boxtimes (S_{2x+1} + S_{2x-1}), \eta') \right).
\]

\item
If $\kappa = \delta_{2x+1}$, and if $m_{2x+1}$ is even or $b=0$, then
\[
D_{\rho|\cdot|^x}^{(l+m_{2x+1})}(\pi) 
=
L\left((\rho|\cdot|^{-x})^{a-l}, \Delta_\rho[x-1,-x]^{b}; \pi(\phi', \eta')\right).
\]
\end{enumerate}

\item
Suppose that $m_{2x-1} = 0$. 
\begin{enumerate}
\item
If $m_{2x+1}$ is even or $b=0$, then
\[
D_{\rho|\cdot|^x}^{(a+m_{2x+1})}(\pi) 
= 
L(\Delta_\rho[x-1,-x]^b; \pi(\phi', \eta'_b)),
\]
where $\eta_b'(\rho \boxtimes S_{2x-1}) = (-1)^b \eta(\rho \boxtimes S_{2x+1})$.

\item
If $m_{2x+1}$ is odd and $b>0$, then 
\[
D_{\rho|\cdot|^x}^{(a+m_{2x+1})}(\pi) 
= 
L(\Delta_\rho[x-1,-x]^{b-1}; \pi(\phi'+\rho \boxtimes (S_{2x-1}+S_{2x+1}), \eta'_b)),
\]
where $\eta_b'(\rho \boxtimes S_{2x-1}) = (-1)^b \eta(\rho \boxtimes S_{2x+1})$ and 
$\eta_b'(\rho \boxtimes S_{2x+1}) = \eta(\rho \boxtimes S_{2x+1})$.
\end{enumerate}
\end{enumerate}
\end{cor}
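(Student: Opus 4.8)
The plan is to deduce the corollary from Theorem~\ref{main} by inserting the explicit descriptions of $A$-packet members provided by Example~\ref{ex.xu1'} and Proposition~\ref{ex.xu2}. First I would unwind the case distinction of Theorem~\ref{main}(1): writing $m_d = m_\phi(\rho\boxtimes S_d)$, the hypothesis ``$m_{2x+1}\neq 0$, $m_{2x-1}\neq 0$ and $\eta(\rho\boxtimes S_{2x+1})\eta(\rho\boxtimes S_{2x-1}) = (-1)^{b+1}$'' is equivalent to ``$m_{2x-1}m_{2x+1}\neq 0$ and $\kappa\neq\delta_{2x+1}$'', since by definition $\delta_{2x+1}=1$ exactly when $m_{2x-1}m_{2x+1}\neq 0$ and the two signs differ, while $(-1)^{b+1}=+1$ exactly when $\kappa=1$. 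This fixes $\psi$, hence $m := m_\psi(\rho\boxtimes S_{2x+1})$ and $m' := m_\psi(\rho\boxtimes S_{2x-1})$: one gets $(m,m') = (m_{2x+1}-1,m_{2x-1}-1)$ in the first alternative and $(m,m') = (m_{2x+1},m_{2x-1})$ otherwise. Feeding these into the formulas of Theorem~\ref{main}(2) and simplifying the $\min$/$\max$ expressions (splitting on whether $a\ge m'$) produces the displayed value of $k$ together with the exponent $a-l$ of the surviving $\rho|\cdot|^{-x}$-factors; the quantities named $l$ in cases (1) and (2) are precisely what is needed to record this exponent.

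The heart of the argument is to compute $D_{\rho|\cdot|^x}^{(m)}(L(\Delta_\rho[x-1,-x]^b;\pi(\phi,\eta)))$ explicitly. By Theorem~\ref{main}(3) this derivative equals $\pi(\psi',\ub{l'},\ub{\eta'},\eta'_0)$ with $\psi' = \psi - (\rho\boxtimes S_{2x+1})^m + (\rho\boxtimes S_{2x-1})^m$ and the data $(\ub{l'},\ub{\eta'},\eta'_0)$ as listed there. One checks that $\psi'$ is again of the form $\phi'' + (\rho\boxtimes S_{2x}\boxtimes S_2)^{b''}$ with $\phi''\in\Phi_\gp(G)$ obtained from $\phi$ by the replacements of summands recorded in the corollary, and with $b''$ equal to $b$, $b+1$ or $b-1$ according to the parity of $m$ and to whether $b>0$ --- these alternatives are exactly the subcases (2)(a)--(d) and (3)(a)--(b). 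Now Example~\ref{ex.xu1'} (and, in the subcases where $b''\ge 1$ and $\psi'$ still contains both $\rho\boxtimes S_{2x+1}$ and $\rho\boxtimes S_{2x-1}$, Proposition~\ref{ex.xu2}) identifies $\pi(\psi',\ub{l'},\ub{\eta'},\eta'_0)$ with $L(\Delta_\rho[x-1,-x]^{b''};\pi(\phi'',\eta''))$, where $\eta''$ is the character of the corollary, its value on $\rho\boxtimes S_{2x-1}$ being $(-1)^b\eta(\rho\boxtimes S_{2x+1})$ or $\eta(\rho\boxtimes S_{2x+1})$ as appropriate; this matches $\eta'_b$, resp.~$\eta'$, as defined before the corollary via the pullback $\AA_{\phi'}\hookrightarrow\AA_\phi\twoheadrightarrow\Sc_\phi$. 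The hard part is precisely this last matching: one must run through each parity case and verify that the parameter $\psi'$ and the character $\eta''$ coming from Theorem~\ref{main}(3) are named consistently with Example~\ref{ex.xu1'} and Proposition~\ref{ex.xu2}. Two points deserve care --- that when $\phi\supset\rho\boxtimes(S_{2x-1}+S_{2x+1})$ the two a priori possible values of $\eta'_i$ permitted in Theorem~\ref{main}(3) coincide, so that $\eta''$ is well defined; and that the hypothesis $\eta''(\rho\boxtimes S_{2x+1})\eta''(\rho\boxtimes S_{2x-1}) = (-1)^{b''+1}$ of Proposition~\ref{ex.xu2} holds for the new data in exactly the subcases in which that proposition is invoked.

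Finally I would assemble the pieces. By Theorem~\ref{main}(2), $D_{\rho|\cdot|^x}^{(k)}(\pi)$ is the unique irreducible subrepresentation of $(\rho|\cdot|^{-x})^{a-l}\rtimes L(\Delta_\rho[x-1,-x]^{b''};\pi(\phi'',\eta''))$. Since $x\ge 1$, the segment $[-x,-x]$ underlying $\rho|\cdot|^{-x}$ precedes $[x-1,-x]$ in the Langlands ordering (as $-2x\le -2<-1$) and every inducing factor has negative central exponent, so the inducing datum is a standard module; hence this unique subrepresentation is by definition $L((\rho|\cdot|^{-x})^{a-l},\Delta_\rho[x-1,-x]^{b''};\pi(\phi'',\eta''))$, which is exactly the representation displayed in the matching case of the corollary. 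Combining the value of $k$ found in the first paragraph with the Langlands data obtained here yields all of cases (1), (2)(a)--(d) and (3)(a)--(b).
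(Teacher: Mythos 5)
Your proposal is correct and follows essentially the same route the paper intends: the paper states explicitly that Corollary~\ref{ab} is ``a reformulation of Theorem~\ref{main}, which follows from Example~\ref{ex.xu1'} and Proposition~\ref{ex.xu2} immediately,'' and your three paragraphs are exactly a careful execution of that unwinding. Your translation of the hypothesis of Theorem~\ref{main}(1) into the ($m_{2x-1}m_{2x+1}\ne 0$, $\kappa\ne\delta_{2x+1}$) dichotomy, the resulting values of $m,m'$, the handling of the $\min/\max$ in Theorem~\ref{main}(2) to recover the displayed $k$ and $l$, the identification of $D^{(m)}_{\rho|\cdot|^x}(L(\Delta_\rho[x-1,-x]^b;\pi(\phi,\eta)))$ via Theorem~\ref{main}(3) and the reparametrizations of Example~\ref{ex.xu1'}/Proposition~\ref{ex.xu2}, and the final observation that the inducing datum is a standard module (so that the unique irreducible subrepresentation is the Langlands subrepresentation) all match what the paper's one-line proof leaves to the reader.
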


\begin{rem}
When $x=1/2$, 
we formally understand that $m_0 = 1$, $\eta(\rho \boxtimes S_0) = +1$ and $a=0$. 
Then Corollary \ref{ab} still holds for $x=1/2$ (\cite[Theorem 3.3]{J3}).
Note that when $x=1/2$, the case (3) does not appear.
\end{rem}

\par

Recall that for $k' \geq 0$, 
the parabolically induced representation $(\rho|\cdot|^{x})^{k'} \rtimes D_{\rho|\cdot|^x}^{(k)}(\pi)$
has a unique irreducible subrepresentation $\pi'$. 
When $k' = 0$ (\resp $k'=k$), we have $\pi' = D_{\rho|\cdot|^x}^{(k)}(\pi)$ (\resp $\pi'=\pi$). 
When $0 < k' < k$, the following formula follows from Corollary \ref{ab} immediately. 

\begin{cor}\label{k'}
Let $\pi = L((\rho|\cdot|^{-x})^a, \Delta_\rho[x-1,-x]^b; \pi(\phi, \eta))$ be as in Corollary \ref{ab}, 
and $D_{\rho|\cdot|^x}^{(k)}(\pi)$ be its highest derivative. 
Set 
\[
a_0 = \left\{
\begin{aligned}
&\min\{a,m_{2x-1}\} \iif \kappa = \delta_{2x+1}, \\
&\min\{a,m_{2x-1}-1\} \iif \kappa \not= \delta_{2x+1}. 
\end{aligned}
\right.
\]
Suppose that $0 < k' < k$. 
Then the unique irreducible subrepresentation $\pi'$ 
of $(\rho|\cdot|^{x})^{k'} \rtimes D_{\rho|\cdot|^x}^{(k)}(\pi)$
is of the form $L((\rho|\cdot|^{-x})^{a_{k'}}, \Delta_\rho[x-1,-x]^{b_{k'}}; \pi(\phi_{k'}, \eta_{k'}))$, 
where $(a_{k'}, b_{k'}, \phi_{k'}, \eta_{k'})$ is given as follows. 
 
\begin{enumerate}
\item
Suppose that $m_{2x+1} = 0$. 
Then $k = a - m_{2x-1} > 0$ and 
\[
(a_{k'}, b_{k'}, \phi_{k'}, \eta_{k'}) = (k'+m_{2x-1}, b, \phi, \eta). 
\]

\item
Suppose that $m_{2x-1}m_{2x+1} \not= 0$. 

\begin{enumerate}
\item
If $\kappa \not= \delta_{2x+1}$, 
then $(a_{k'}, b_{k'}, \phi_{k'}, \eta_{k'})$ is equal to
\[
\left\{
\begin{aligned}
&(a_0, b, \phi' + (\rho \boxtimes S_{2x+1})^{k'+1}-(\rho \boxtimes S_{2x-1})^{k'+1},\eta') 
\iif k' < m_{2x+1}-1, k' \not\equiv m_{2x+1} \bmod 2, \\
&(a_0, b+1, \phi' + (\rho \boxtimes S_{2x+1})^{k'}-(\rho \boxtimes S_{2x-1})^{k'+2},\eta') 
\iif k' < m_{2x+1}-1, k' \equiv m_{2x+1} \bmod 2, \\
&(k'-m_{2x+1}+m_{2x-1}, b, \phi, \eta) 
\iif k' \geq m_{2x+1}-1.
\end{aligned}
\right. 
\]

\item
If $\kappa = \delta_{2x+1}$ and $b>0$, 
then $(a_{k'}, b_{k'}, \phi_{k'}, \eta_{k'})$ is equal to
\[
\left\{
\begin{aligned}
&(a_0, b, \phi' + (\rho \boxtimes S_{2x+1})^{k'}-(\rho \boxtimes S_{2x-1})^{k'},\eta') 
\iif k' < m_{2x+1}, k' \equiv m_{2x+1} \bmod 2, \\
&(a_0, b-1, \phi' + (\rho \boxtimes S_{2x+1})^{k'+1}-(\rho \boxtimes S_{2x-1})^{k'-1},\eta') 
\iif k' < m_{2x+1}, k' \not\equiv m_{2x+1} \bmod 2, \\
&(k'-m_{2x+1}+m_{2x-1}, b, \phi, \eta) 
\iif k' \geq m_{2x+1}.
\end{aligned}
\right. 
\]
Similarly, if $\kappa = \delta_{2x+1}$ and $b=0$, 
then $(a_{k'}, b_{k'}, \phi_{k'}, \eta_{k'})$ is equal to
\[
\left\{
\begin{aligned}
&(a_0, 0, \phi' + (\rho \boxtimes S_{2x+1})^{k'}-(\rho \boxtimes S_{2x-1})^{k'},\eta') 
\iif k' < m_{2x+1}, \\
&(k'-m_{2x+1}+m_{2x-1}, 0, \phi, \eta) 
\iif k' \geq m_{2x+1}.
\end{aligned}
\right. 
\]
\end{enumerate}

\item
Suppose that $m_{2x-1} = 0$. 
If $b>0$, 
then $(a_{k'}, b_{k'}, \phi_{k'}, \eta_{k'})$ is equal to
\[
\left\{
\begin{aligned}
&(0, b, \phi' + (\rho \boxtimes S_{2x+1})^{k'}-(\rho \boxtimes S_{2x-1})^{k'},\eta_b') 
\iif k' < m_{2x+1}, k' \equiv m_{2x+1} \bmod 2, \\
&(0, b-1, \phi' + (\rho \boxtimes S_{2x+1})^{k'+1}-(\rho \boxtimes S_{2x-1})^{k'-1},\eta_b') 
\iif k' < m_{2x+1}, k' \not\equiv m_{2x+1} \bmod 2, \\
&(k'-m_{2x+1}, b, \phi, \eta) 
\iif k' \geq m_{2x+1}.
\end{aligned}
\right. 
\]
Similarly, if $b=0$, 
then $(a_{k'}, b_{k'}, \phi_{k'}, \eta_{k'})$ is equal to
\[
\left\{
\begin{aligned}
&(0, 0, \phi' + (\rho \boxtimes S_{2x+1})^{k'}-(\rho \boxtimes S_{2x-1})^{k'},\eta_b') 
\iif k' < m_{2x+1}, \\
&(k'-m_{2x+1}, 0, \phi, \eta) 
\iif k' \geq m_{2x+1}.
\end{aligned}
\right. 
\]
\end{enumerate}
\end{cor}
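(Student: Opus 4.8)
The proof rests on two inputs already available. The first is Corollary \ref{ab}, which computes, for any representation of the shape $L((\rho|\cdot|^{-x})^{a'}, \Delta_\rho[x-1,-x]^{b'}; \pi(\phi',\eta'))$, both its highest $\rho|\cdot|^x$-derivative and the order at which that highest derivative occurs. The second is Proposition \ref{highest} (2): since $x>0$, the representation $\rho|\cdot|^x$ is not self-dual, so for an irreducible $\sigma$ with $\Jac_{\rho|\cdot|^x}(\sigma)=0$ and any $k'\ge 0$ the induced representation $(\rho|\cdot|^{x})^{k'}\rtimes\sigma$ has a unique irreducible subrepresentation, and this subrepresentation is characterized as the unique irreducible $\tau$ whose highest $\rho|\cdot|^x$-derivative is $D_{\rho|\cdot|^x}^{(k')}(\tau)=\sigma$. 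Writing $\sigma=D_{\rho|\cdot|^x}^{(k)}(\pi)$, which has $\Jac_{\rho|\cdot|^x}(\sigma)=0$ and is explicit by Corollary \ref{ab}, the plan is therefore: for each case of the statement, set the candidate $\pi_0':=L((\rho|\cdot|^{-x})^{a_{k'}}, \Delta_\rho[x-1,-x]^{b_{k'}}; \pi(\phi_{k'},\eta_{k'}))$, and verify that $D_{\rho|\cdot|^x}^{(k')}(\pi_0')$ is its highest derivative and equals $\sigma$. Once this is done, Proposition \ref{highest} (2) forces $\pi_0'$ to be the unique irreducible subrepresentation of $(\rho|\cdot|^{x})^{k'}\rtimes\sigma$, i.e.\ $\pi_0'=\pi'$.

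This verification is carried out entirely by applying Corollary \ref{ab} a second time, now to $\pi_0'$. From the listed data $(a_{k'},b_{k'},\phi_{k'},\eta_{k'})$ one reads off $m_{\phi_{k'}}(\rho\boxtimes S_{2x+1})$, $m_{\phi_{k'}}(\rho\boxtimes S_{2x-1})$, the parity $\kappa_{k'}\equiv b_{k'}$, and the defect $\delta_{2x+1}$ relative to $\phi_{k'}$; the last requires $\eta_{k'}(\rho\boxtimes S_{2x\pm 1})$, which follows from the description of $\eta'$ (resp.\ $\eta_b'$) as a pullback of $\eta$ together with the fact that a character of $\Sc_{\phi_{k'}}$ takes equal values on isomorphic summands. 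One then checks that the ``$k$''-formula of Corollary \ref{ab}, evaluated for $\pi_0'$, returns exactly $k'$, and that the relevant branch --- (1) when $\phi_{k'}\not\supset\rho\boxtimes S_{2x+1}$, or one of (2)(a)--(d), (3)(a)--(b) otherwise, selected by $\kappa_{k'}$ against $\delta_{2x+1}(\phi_{k'})$ and by the parity of $m_{\phi_{k'}}(\rho\boxtimes S_{2x+1})$ --- reproduces precisely the $\sigma$ computed by Corollary \ref{ab} from $\pi$. For the indices $k'$ at or beyond the relevant threshold, where $\phi_{k'}=\phi$ and only the exponent $a_{k'}=k'-m_\phi(\rho\boxtimes S_{2x+1})+m_\phi(\rho\boxtimes S_{2x-1})$ (or its analogue) changes, this reduces to Corollary \ref{ab} (1) or to the ``$l$-regime'' of Corollary \ref{ab} (2); one also checks that the ``below'' and ``above'' descriptions of $(a_{k'},b_{k'},\phi_{k'},\eta_{k'})$ agree at the threshold index.

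The content is thus purely combinatorial, and the main obstacle is the sheer size of the case analysis: one runs through the three regimes ($m_\phi(\rho\boxtimes S_{2x+1})=0$; $m_\phi(\rho\boxtimes S_{2x-1})m_\phi(\rho\boxtimes S_{2x+1})\ne 0$; $m_\phi(\rho\boxtimes S_{2x-1})=0$), and inside each the value of $\kappa$ relative to $\delta_{2x+1}$, the parity of $m_\phi(\rho\boxtimes S_{2x+1})$, whether $b>0$, and whether $k'$ lies below or above the threshold with its parity relative to $m_\phi(\rho\boxtimes S_{2x+1})$. The delicate point in each case is to track how the character data $\eta'$ and $\eta_b'$ transform under $\phi\rightsquigarrow\phi_{k'}$ and to confirm that the parity conditions appearing in Corollary \ref{ab} (that a certain multiplicity be even, or that $\eta(\rho\boxtimes S_{2x+1})=\eta(\rho\boxtimes S_{2x-1})$) pass correctly to $\pi_0'$; one must also check that each $\phi_{k'}$ is a genuine good-parity parameter and that $a_{k'}$ lies in the admissible range. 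None of this uses anything beyond Corollary \ref{ab} and Proposition \ref{highest}, but carrying it out carefully case by case is where all the work lies.
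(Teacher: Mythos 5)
Your proposal is correct and matches the paper's intent: the paper gives no written proof beyond the remark that the formula ``follows from Corollary \ref{ab} immediately,'' and your expansion --- characterize $\pi'$ as the unique irreducible representation whose highest $\rho|\cdot|^x$-derivative of order $k'$ equals $D_{\rho|\cdot|^x}^{(k)}(\pi)$ (via Proposition \ref{highest}(2) and $\Jac_{\rho|\cdot|^x}(D_{\rho|\cdot|^x}^{(k)}(\pi))=0$), then verify that the listed candidate has precisely this highest derivative by reapplying Corollary \ref{ab} --- is exactly the computation the paper is pointing to.
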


\subsection{Examples of the Aubert duals}\label{s.ex}
Let $\pi$ be an irreducible representation of $G_n$. 
By Theorem \ref{vs} and Proposition \ref{ML}, 
if one could compute $M^+_{\rho^\vee}(\pi)$ for all $\rho$, 
one would obtain the Langlands data of the Aubert dual $\hat\pi$ of $\pi$ almost explicitly. 
In this subsection, we give some specific examples of explicit calculations of $M^+_{\rho^\vee}(\pi)$ 
and $\hat\pi$ for some irreducible representations $\pi$ of $G_n = \Sp_{2n}(F)$.  
When $\phi = S_{d_1} \oplus \dots \oplus S_{d_t} \in \Phi(G)$ and $\eta_i = \eta(S_{d_i})$, 
where $d_1, \dots, d_t$ are all odd, 
we write
\[
\pi(\phi,\eta) = \pi(d_1^{\eta_1}, \dots, d_t^{\eta_t}). 
\]
We also write $\1 = \1_{\GL_1(F)}$, and $\Delta[x,y] = \Delta_\1[x,y]$.
\par

First, let us consider $\pi(1^+,3^-,5^+,5^+,5^+,5^+,7^-) \in \Irr(\Sp_{30}(F))$. 
Then
\begin{align*}
&
M^+_\1\left(\pi(1^+,3^-,5^+,5^+,5^+,5^+,7^-)\right)\notag
\\&=
\left[
(2,3); 
M^+_\1\left(
L(\Delta[1,-2]; \pi(1^+,3^-,3^-,3^-,7^-))
\right)
\right]
\\&=
\left[
(2,3), (3,1); 
M^+_\1\left(
L(\Delta[1,-2]; \pi(1^+,3^-,3^-,3^-,5^-))
\right)
\right]
\\&=
\left[
(2,3), (3,1), (1,3); 
M^+_\1\left(
L(\Delta[0,-2]; \pi(1^+,1^+,1^+,3^-,5^-))
\right)
\right]
\\&\overset{(1)}{=}
\left[
(2,3), (3,1), (1,3), (2,1); 
M^+_\1\left(
L(\Delta[0,-2]; \pi(1^+,1^+,1^+,3^-,3^-))
\right)
\right]
\\&=
\left[
(2,3), (3,1), (1,3), (2,1), (0,2); 
M^+_\1\left(
L(\Delta[-1,-2]; \pi(1^+,3^-,3^-))
\right)
\right]
\\&=
\left[
(2,3), (3,1), (1,3), (2,1), (0,2), (1,1); 
M^+_\1\left(
L(\Delta[-1,-2], \Delta[0,-1]; \pi(1^+))
\right)
\right]
\\&\overset{(2)}{=}
\left[
(2,3), (3,1), (1,3), (2,1), (0,2), (1,1), (-1,1); 
M^+_\1\left(
L(|\cdot|^{-2}, \Delta[0,-1]; \pi(1^+))
\right)
\right]
\\&=
\left[
(2,3), (3,1), (1,3), (2,1), (0,2), (1,1), (-1,1), (0,1); 
M^+_\1\left(
L(|\cdot|^{-2}, |\cdot|^{-1}; \pi(1^+))
\right)
\right]
\\&=
\left[
(2,3), (3,1), (1,3), (2,1), (0,2), (1,1), (-1,1), (0,1), (-2,1), (-1,1); \pi(1^+)
\right].
\end{align*}
The equations (1) and (2) are non-trivial. 
We explain these equations.
\begin{enumerate}
\item[(1)]
We compute
the highest derivative $D_{|\cdot|^2}^{(k)}(L(\Delta[0,-2]; \pi(1^+,1^+,1^+,3^-,5^-)))$
by applying Jantzen's algorithm recalled in \S \ref{alg}.
By Jantzen's Claim 1, we have
\begin{align*}
L(\Delta[0,-2]; \pi(1^+,1^+,1^+,3^-,5^-))
\hookrightarrow \Delta[0,-1] \rtimes L(|\cdot|^{-2}; \pi(1^+,1^+,1^+,3^-,5^-)).
\end{align*}
By Corollary \ref{ab} (2)-(d), 
\[
D_{|\cdot|^2}^{(2)}(L(|\cdot|^{-2}; \pi(1^+,1^+,1^+,3^-,5^-)))
= L(|\cdot|^{-2}; \pi(1^+,1^+,1^+,3^-,3^-)). 
\]
Therefore, Jantzen's Claim 3 says that
\[
D_{|\cdot|^2}^{(2)}(L(\Delta[0,-2]; \pi(1^+,1^+,1^+,3^-,5^-))) = L(\Delta[0,-2]; \pi(1^+,1^+,1^+,3^-,3^-))
\]
is the highest derivative. 

\item[(2)]
We have to show that $\Jac_{|\cdot|^2}(L(\Delta[-1,-2], \Delta[0,-1]; \pi(1^+)))=0$. 
It follows from $R_{|\cdot|^{-2}}(L(\Delta[-1,-2], \Delta[0,-1]))=0$. 
\end{enumerate}
\par

Therefore, 
\begin{align*}
&M^-_\1\left(\hat\pi(1^+,3^-,5^+,5^+,5^+,5^+,7^-)\right)
\\&=
\left[
(-2,3), (-3,1), (-1,3), (-2,1), (0,2), (-1,1), (1,1), (0,1), (2,1), (1,1); \pi(1^+)
\right].
\end{align*}
By Theorem \ref{vs} and Proposition \ref{ML}, we have
\begin{align*}
&\hat\pi(1^+,3^-,5^+,5^+,5^+,5^+,7^-) \hookrightarrow
\\&
\Delta[-2,-3] \times (|\cdot|^{-2})^{2}
\times \Delta[-1,-2] \times (|\cdot|^{-1})^{2} \times \Delta[0,-1]
\rtimes \pi(1^\epsilon,1^\epsilon,1^\epsilon,3^\epsilon,5^+)
\end{align*}
for some sign $\epsilon \in \{\pm\}$.
Since
\[
\pi(1^\epsilon,3^\epsilon,5^+) = D_{|\cdot|^{-1}}^{(1)} \circ D_{|\cdot|^{0}}^{(2)} \circ D_{|\cdot|^{-2}}^{(1)} \circ D_{|\cdot|^{-1}}^{(3)} \circ D_{|\cdot|^{-3}}^{(1)} \circ D_{|\cdot|^{-2}}^{(3)}
(\hat\pi(1^+,3^-,5^+,5^+,5^+,5^+,7^-))
\]
up to multiplicity, 
we must have
\begin{align*}
\hat\pi(1^\epsilon,3^\epsilon,5^+) 
&= D_{|\cdot|^{1}}^{(1)} \circ D_{|\cdot|^{0}}^{(2)} \circ D_{|\cdot|^{2}}^{(1)} 
\circ D_{|\cdot|^{1}}^{(3)} \circ D_{|\cdot|^{3}}^{(1)} \circ D_{|\cdot|^{2}}^{(3)}
(\pi(1^+,3^-,5^+,5^+,5^+,5^+,7^-))
\\&=
L(\Delta[-1,-2], \Delta[0,-1]; \pi(1^+)). 
\end{align*}
\par

To determine $\epsilon \in \{\pm\}$, we compute the Aubert duals of 
$\pi(1^+,3^+,5^+)$ and $\pi(1^-,3^-,5^+)$. 
When $\epsilon = +$, we have
\begin{align*}
M^+_\1\left( \pi(1^+,3^+,5^+) \right)
&=
\left[
(2,1); M^+_\1\left( \pi(1^+,3^+,3^+) \right)
\right]
\\&=
\left[
(2,1), (1,2); M^+_\1\left( \pi(1^+,1^+,1^+) \right)
\right]
\\&=
\left[
(2,1), (1,2), (0,1); \pi(1^+)
\right].
\end{align*}
Hence
\begin{align*}
\hat\pi(1^+,3^+,5^+)
\hookrightarrow 
|\cdot|^{-2} \times (|\cdot|^{-1})^{2} \rtimes \pi(1^+,1^+,1^+).
\end{align*}
When $\epsilon = -$, we have
\begin{align*}
M^+_\1\left( \pi(1^-,3^-,5^+) \right)
&=
\left[
(1,1); M^+_\1\left( \pi(1^-,1^-,5^+) \right)
\right]
\\&=
\left[
(1,1), (2,1); M^+_\1\left( \pi(1^-,1^-,3^+) \right)
\right]
\\&=
\left[
(1,1), (2,1), (0,1); M^+_\1\left( \pi(3^+) \right)
\right]
\\&=
\left[
(1,1), (2,1), (0,1), (1,1); \pi(1^+)
\right].
\end{align*}
Hence
\begin{align*}
\hat\pi(1^-,3^-,5^+)
\hookrightarrow 
\Delta[-1,-2] \times \Delta[0,-1] \rtimes \pi(1^+). 
\end{align*}
Therefore, the correct sign is $\epsilon = -$. 
\par

This method does not always determine $\hat\pi$. 
For example, let us consider 
$\pi_\epsilon = L(\Delta[0,-2]; \pi(1^\epsilon,1^\epsilon,3^+)) \in \Irr(\Sp_{10}(F))$
for a sign $\epsilon \in \{\pm\}$.
Then for any $\epsilon \in \{\pm\}$, we have
\[
M^+_\1(\pi_\epsilon) = \left[(0,2), (1,1), (2,1), (-1,1); \pi(1^+)\right]. 
\]
Using Theorem \ref{vs} and Proposition \ref{ML}, 
this implies that $\hat\pi_\epsilon = \pi_{\hat\epsilon}$ for some $\hat\epsilon \in \{\pm\}$. 
However, the correspondence $\epsilon \mapsto \hat\epsilon$ is not determined.

\section{Proof of Theorem \ref{main}}\label{s.proof}
In this section, we prove Theorem \ref{main}.

\subsection{The case $a=0$}\label{s.a=0}
First, we consider the case $a = 0$. 

\begin{proof}[Proof of Theorem \ref{main} when $a=0$]
Let $\pi = L(\Delta_\rho[x-1,-x]^b; \pi(\phi, \eta))$. 
The assertions (1) and (4) follow from Example \ref{ex.xu1'} and Proposition \ref{ex.xu2}. 
For (2) and (3), we note that $D_{\rho|\cdot|^x}^{(k)}(\pi) \not= 0 \implies k \leq m$ by Lemma \ref{xu}. 
\par

To determine $D_{\rho|\cdot|^x}^{(m)}(\pi)$, we will use M{\oe}glin's construction.
Write 
\begin{align*}
\psi = \phi_0 
&+ \rho \boxtimes (S_{a_{-t'}} + \dots + S_{a_{-1}}) 
\\&+(\rho \boxtimes S_{2x-1})^{m'}
+ (\rho \boxtimes S_{2x} \boxtimes S_2)^{b'}
+ (\rho \boxtimes S_{2x+1})^{m} 
\\&+ \rho \boxtimes (S_{a_{1}} + \dots + S_{a_{t}}), 
\end{align*}
where $a_{-t'} \leq \dots \leq a_{-1} < 2x-1$ and $2x+1 < a_1 \leq \dots \leq a_t$, 
and $\phi_0 \not\supset \rho \boxtimes S_{d}$ for any $d > 0$. 
Take an $A$-parameter of the form 
\begin{align*}
\psi_> = \phi_0 
&+ \rho \boxtimes (S_{a_{-t'}} + \dots + S_{a_{-1}}) 
\\&+(\rho \boxtimes S_{2x-1})^{m'}
+ (\rho \boxtimes S_{2x} \boxtimes S_2)^{b'}
+ \rho \boxtimes (S_{2y_1+1} + \dots + S_{2y_m+1})
\\&+ \rho \boxtimes (S_{a'_{1}} + \dots + S_{a'_{t}}), 
\end{align*}
where
$2y_i+1 \equiv a_i' \equiv 2x+1 \bmod 2$ such that 
$2x+1 < 2y_1+1 < \dots < 2y_m+1 < a_1' < \dots < a_t'$. 
When $\pi = \pi(\psi, \ub{l}, \ub{\eta}, \eta_0)$, 
we set
$\pi_> = \pi(\psi_>, \ub{l}, \ub{\eta}, \eta_0)$. 
Then M{\oe}glin's construction says that $\pi = J_2 \circ J_1 (\pi_>)$, 
where
\begin{align*}
J_1 &= 
\Jac_{\rho|\cdot|^{y_m}, \dots, \rho|\cdot|^{x+1}} 
\circ \dots \circ 
\Jac_{\rho|\cdot|^{y_1}, \dots, \rho|\cdot|^{x+1}}, \\
J_2 &= 
\Jac_{\rho|\cdot|^{\half{a_t'-1}}, \dots, \rho|\cdot|^{\half{a_t+1}}} 
\circ \dots \circ 
\Jac_{\rho|\cdot|^{\half{a_1'-1}}, \dots, \rho|\cdot|^{\half{a_1+1}}}.
\end{align*}
\par

Set $\pi' = J_2 \circ J_1'(\pi_>)$ with
\[
J_1' = \Jac_{\rho|\cdot|^{y_m}, \dots, \rho|\cdot|^{x}} 
\circ \dots \circ 
\Jac_{\rho|\cdot|^{y_1}, \dots, \rho|\cdot|^{x}}.
\]
Then $\pi' = \pi_{<_{\psi}}(\psi', \ub{l}, \ub{\eta}, \eta_0)$, 
where 
\begin{align*}
\psi' = \phi_0 
&+ \rho \boxtimes (S_{d_{-t'}} + \dots + S_{d_{-1}}) 
\\&+(\rho \boxtimes S_{2x-1})^{m'}
+ (\rho \boxtimes S_{2x} \boxtimes S_2)^{b'}
+ (\rho \boxtimes S_{2x-1})^{m} 
\\&+ \rho \boxtimes (S_{d_{1}} + \dots + S_{d_{t}}).
\end{align*}
However, the order $<_{\psi}$ for $(\rho \boxtimes S_{2x-1})^{m'+m}$ 
and $(\rho \boxtimes S_{2x} \boxtimes S_2)^{b'}$ is 
\[
\underbrace{\rho \boxtimes S_{2x-1} <_{\psi} \dots <_{\psi} \rho \boxtimes S_{2x-1}}_{m'}
<_{\psi} \rho \boxtimes S_{2x} \boxtimes S_2 <_{\psi}
\underbrace{\rho \boxtimes S_{2x-1} <_{\psi} \dots <_{\psi} \rho \boxtimes S_{2x-1}}_{m}.
\]
To change the order so that $\rho \boxtimes S_{2x-1} <_{\psi'} \rho \boxtimes S_{2x} \boxtimes S_2$
for all $\rho \boxtimes S_{2x-1}$ and $\rho \boxtimes S_{2x} \boxtimes S_2$, 
we use a result of Xu \cite[Theorem 6.1]{X3}. 
By this theorem, we see that $\pi' = \pi(\psi', \ub{l'}, \ub{\eta'}, \eta_0)$, 
where $\ub{l'}$ and $\ub{\eta'}$ are given in Theorem \ref{main} (3). 
In particular, $\pi' \not= 0$ by Example \ref{ex.xu1'} and Proposition \ref{ex.xu2}.
\par

Now, by \cite[Corollary 5.4, Lemma 5.7]{X1} and Lemma \ref{xu}, 
one can write
\[
\pi_> \hookrightarrow \bigtimes_{i=1}^m \Delta_\rho[y_i,x] \rtimes J'_1 (\pi_>). 
\]
Hence $J_1(\pi_>) \hookrightarrow (\rho|\cdot|^x)^m \rtimes J'_1 (\pi_>)$ so that
we conclude that
\[
\pi \hookrightarrow (\rho|\cdot|^x)^m \rtimes \pi'.
\]
Since $D_{\rho|\cdot|^{x}}^{(m)}(\pi)$ is irreducible or zero by Proposition \ref{highest}, 
we see that $D_{\rho|\cdot|^{x}}^{(m)}(\pi) = \pi'$.
\end{proof}

\subsection{The case $x>1$}\label{s.x>1}
In this subsection, we will prove the following proposition.

\begin{prop}\label{x>1}
Assume that $x > 1$.
Consider $\pi = L((\rho|\cdot|^{-x})^a, \Delta_\rho[x-1,-x]^b; \pi(\phi, \eta))$ 
and $\pi_0 = L(\Delta_\rho[x-1,-x]^b; \pi(\phi, \eta))$. 
Set
\[
l = \left\{
\begin{aligned}
&\max\{a-m_{2x-1}+1,0\} 
\iif m_{2x-1}m_{2x+1} \not=0, \delta_{2x+1}\not=\kappa, \\
&\max\{a-m_{2x-1},0\} 
\other.
\end{aligned}
\right. 
\]
If $D_{\rho|\cdot|^{x}}^{(k_0)}(\pi_0)$ and $D_{\rho|\cdot|^{x}}^{(k)}(\pi)$ are the highest derivatives, 
then $k-k_0=l$. 
\end{prop}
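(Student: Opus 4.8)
The plan is to fix $x>1$ and argue by induction on $a$; the case $a=0$ is trivial since then $\pi=\pi_0$ and $l=0$. Write $l(c)$ for the quantity $l$ of the statement with $a$ replaced by $c$, and put $m'=m_\psi(\rho|\cdot|^{\cdot})$-style shorthand $m'=m_\psi(\rho\boxtimes S_{2x-1})$, so $l(c)=\max\{c-m',0\}$. Everything rests on the fact that, for $x>1$, the singleton $\{-x\}$ is unlinked to $\{x\}$ and to $\{x-1\}$ and is contained in $[x-1,-x]$, so by Theorem~\ref{zel} the parabolic inductions $\rho|\cdot|^{-x}\times\rho|\cdot|^{x}$, $\rho|\cdot|^{-x}\times\rho|\cdot|^{x-1}$ and $\rho|\cdot|^{-x}\times\Delta_\rho[x-1,-x]$ are all irreducible. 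In particular $\Jac_{\rho|\cdot|^{x}}$ and $\Jac_{\rho|\cdot|^{-x}}$ commute on $\Rep(G_n)$, and since $x,-x\neq 0$ so do the normalized derivatives $D^{(\bullet)}_{\rho|\cdot|^{x}}$ and $D^{(\bullet)}_{\rho|\cdot|^{-x}}$.

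The first step is to describe the $\rho|\cdot|^{-x}$-derivatives of $\pi$. Applying Theorem~\ref{tadic} with $M^*(\rho|\cdot|^{-x})=1\otimes\rho|\cdot|^{-x}+\rho|\cdot|^{-x}\otimes 1+\rho|\cdot|^{x}\otimes 1$ (using $\rho^\vee\cong\rho$), together with the vanishing $\Jac_{\rho|\cdot|^{-x}}(\pi_0)=0$ --- which holds because neither $\semi\Jac_{P_d}(\Delta_\rho[x-1,-x])$ nor $\semi\Jac_{P_d}$ of an irreducible tempered representation has a left factor of the form $\rho|\cdot|^{y}$ with $y<0$ --- one computes $\Jac_{\rho|\cdot|^{-x}}\!\big((\rho|\cdot|^{-x})^{c}\rtimes\pi_0\big)=c\,(\rho|\cdot|^{-x})^{c-1}\rtimes\pi_0$ for all $c\ge 0$, and likewise $\Jac_{\rho|\cdot|^{-x}}\!\big((\rho|\cdot|^{x})^{c}\rtimes\tau\big)=c\,(\rho|\cdot|^{x})^{c-1}\rtimes\tau$ whenever $\Jac_{\rho|\cdot|^{-x}}(\tau)=0$. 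Since $\pi$ is the unique irreducible subrepresentation of $(\rho|\cdot|^{-x})^a\rtimes\pi_0$, the first formula together with Proposition~\ref{highest} forces $D^{(c)}_{\rho|\cdot|^{-x}}(\pi)=L\big((\rho|\cdot|^{-x})^{a-c},\Delta_\rho[x-1,-x]^b;\pi(\phi,\eta)\big)=:\pi^{(a-c)}$ for $0\le c\le a$, with $a$ the order of the highest $\rho|\cdot|^{-x}$-derivative; here $\pi^{(a)}=\pi$ and $\pi^{(0)}=\pi_0$. Writing $k^{(c)}$ for the order of the highest $\rho|\cdot|^{x}$-derivative of $\pi^{(c)}$, the inductive hypothesis reads $k^{(c)}=k_0+l(c)$ for $c<a$, and commuting the two families of derivatives yields the bookkeeping identity
\[
D^{(c)}_{\rho|\cdot|^{-x}}\!\big(D^{(N)}_{\rho|\cdot|^{x}}(\pi)\big)=D^{(N)}_{\rho|\cdot|^{x}}\!\big(\pi^{(a-c)}\big)\qquad(0\le c\le a).
\]

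For the lower bound $k^{(a)}\ge k_0+l(a)$ I would first use irreducibility of $\rho|\cdot|^{-x}\times\rho|\cdot|^{x}$ to rewrite $\pi\hookrightarrow(\rho|\cdot|^{-x})^a\rtimes\pi_0\hookrightarrow(\rho|\cdot|^{-x})^a\rtimes(\rho|\cdot|^{x})^{k_0}\rtimes D^{(k_0)}_{\rho|\cdot|^{x}}(\pi_0)$ as $\pi\hookrightarrow(\rho|\cdot|^{x})^{k_0}\rtimes\big((\rho|\cdot|^{-x})^a\rtimes D^{(k_0)}_{\rho|\cdot|^{x}}(\pi_0)\big)$, which already gives $k^{(a)}\ge k_0$; for the surplus $l(a)=\max\{a-m',0\}$ when $a>m'$ I would interlace the extra copies of $\rho|\cdot|^{-x}$ with copies of $\rho|\cdot|^{x}$, using that $\psi$ (the parameter of $\pi_0$, Theorem~\ref{main}(1)) has exactly $m'$ summands $\rho\boxtimes S_{2x-1}$, so that once $m'$ of the $\rho|\cdot|^{-x}$-factors are absorbed, each further one produces one more $\rho|\cdot|^{x}$-derivative (the inductive embeddings being provided by Theorem~\ref{zel} and Proposition~\ref{irr01}). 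Conversely, for the upper bound $k^{(a)}\le k_0+l(a)$: if $D^{(N)}_{\rho|\cdot|^{x}}(\pi)\neq 0$ with $N=k_0+l(a)+1$, the displayed identity and $k^{(a-c)}=k_0+l(a-c)\le k_0+l(a)<N$ for $1\le c\le a$ show that $D^{(N)}_{\rho|\cdot|^{x}}(\pi)$ is annihilated by every positive $\rho|\cdot|^{-x}$-derivative; one then rules out this residual possibility by applying Xu's estimate (Lemma~\ref{xu}) to an Arthur parameter $\psi''\supset\psi$, produced by M{\oe}glin's construction, with $\pi\in\Pi_{\psi''}$ and with exactly $k_0+l(a)$ summands $\rho\boxtimes S_{c}\boxtimes S_{c'}$ satisfying $(c-c')/2=x$ (for $x=1$ this $\psi''$ is the parameter of Proposition~\ref{ex.xu3}, and for $x>1$ it is obtained from $\psi$ by the analogous manipulation at the components $\rho\boxtimes S_{2x-1}$).

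The main obstacle I expect is the upper bound in the regime $a>m'$, i.e.\ locating the correct larger parameter $\psi''$ and verifying $\pi\in\Pi_{\psi''}$ for $x>1$ (equivalently, carrying out the interlacing in the lower bound so that the two bounds match exactly); when $a\le m'$ the argument is lighter, since then $l(a)=0$ and the displayed identity together with the inductive hypothesis already shows that no $(k_0+1)$-st $\rho|\cdot|^{x}$-derivative of $\pi$ can survive beyond the $\rho|\cdot|^{-x}$-derivative stratification --- the remaining degenerate case being eliminated by the same Arthur-parameter estimate or, alternatively, by noting that $D^{(N)}_{\rho|\cdot|^{x}}(\pi)$ still carries $\rho|\cdot|^{-x}$ in its cuspidal support with multiplicity $a$.
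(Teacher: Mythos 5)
Your approach — induction on $a$, using the commutation of $D_{\rho|\cdot|^{x}}$ and $D_{\rho|\cdot|^{-x}}$ and the bookkeeping identity to reduce to the case $a-1$ — is genuinely different from the paper's, which instead follows Jantzen's strategy of inducting on $x$: write $\pi_1=D^{(\alpha)}_{\rho|\cdot|^{x-1}}(\pi)$, $\pi_2=D^{(\beta)}_{\rho|\cdot|^{x}}(\pi_1)$, $\pi_3=D^{(\gamma)}_{\rho|\cdot|^{x-1}}(\pi_2)$, extract $k=\max\{\beta-\alpha-\gamma,0\}+\gamma$, and compute $\alpha,\beta,\gamma$ by eight case-by-case applications of Jantzen's algorithm and the already-proved formula at $a=0$. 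Your idea of exploiting unlinkedness of $\{x\}$, $\{-x\}$, $\{x-1\}$ for $x>1$ is reasonable and the preliminary computation of $M^*(\rho|\cdot|^{-x})$ and the vanishing $\Jac_{\rho|\cdot|^{-x}}(\pi_0)=0$ are correct.

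However, the upper bound is where your argument breaks, and I think it breaks irreparably as written. You propose to rule out $D^{(N)}_{\rho|\cdot|^{x}}(\pi)\ne 0$ (with $N=k_0+l(a)+1$) by exhibiting an Arthur parameter $\psi''$ with $\pi\in\Pi_{\psi''}$ whose count of summands $\rho\boxtimes S_c\boxtimes S_{c'}$ with $(c-c')/2=x$ is $k_0+l(a)$, and applying Lemma~\ref{xu}. But for $x>1$ and $a>0$ the representation $L\big((\rho|\cdot|^{-x})^a,\Delta_\rho[x-1,-x]^b;\pi(\phi,\eta)\big)$ is not of Arthur type, so no such $\psi''$ exists. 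Concretely: in M{\oe}glin's construction (Theorem~\ref{moe}(3)) and in Theorem~\ref{arthur}(3), the only factor $\rho\boxtimes S_{a_i}\boxtimes S_{b_i}$ producing a lone singleton segment $\rho|\cdot|^{-x}$ in the Langlands data forces $a_i=1$ and $b_i=2x+1$, and the corresponding $L$-parameter contribution $\rho\boxtimes(S_{a_i-b_i+2l_i+1}+\cdots)$ then requires $a_i-b_i+2l_i+1\ge 1$, i.e. $l_i\ge x$, in which case the contribution is the full ladder $L(\rho|\cdot|^{-x},\ldots)$ rather than just $\rho|\cdot|^{-x}$. This is precisely why the paper's $x=1$ case (Proposition~\ref{ex.xu3}, using $\rho\boxtimes S_1\boxtimes S_3$) does not generalize to $x>1$, and why for $x>1$ the paper switches from Arthur-packet arguments to Jantzen's reduction in $x$. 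Your fallback observation that $D^{(N)}_{\rho|\cdot|^{x}}(\pi)$ still carries $\rho|\cdot|^{-x}$ in its cuspidal support with multiplicity $a$ does not finish the argument either: a $\rho|\cdot|^{-x}$-reduced representation can still have $\rho|\cdot|^{-x}$ in its cuspidal support (inside a segment $\Delta_\rho[u,-x]$ with $u>-x$), so no immediate contradiction is obtained.

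There are two further gaps, less fundamental but still real. First, your claim that $D^{(c)}_{\rho|\cdot|^{-x}}(\pi)=\pi^{(a-c)}$ for all intermediate $c$ is asserted from Proposition~\ref{highest}, but that proposition only governs the highest derivative; in general $D^{(c)}(\pi)$ is a priori only bounded above (in the Grothendieck group) by $\binom{a}{c}(\rho|\cdot|^{-x})^{a-c}\rtimes\pi_0$, and the argument that nothing other than $\pi^{(a-c)}$ appears is not given --- yet this is exactly what you need so that $D^{(N)}_{\rho|\cdot|^{x}}(\pi^{(a-c)})=0$ kills the right-hand side of the bookkeeping identity; if extra constituents appear there, the identity no longer yields the vanishing of $D^{(c)}_{\rho|\cdot|^{-x}}(D^{(N)}_{\rho|\cdot|^{x}}(\pi))$. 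Second, the lower bound ``interlacing'' in the regime $a>m'$ is left as a pointer to Theorem~\ref{zel} and Proposition~\ref{irr01} but no actual embeddings are produced; this is where the paper's case-by-case analysis does real work, and it is not a routine argument.
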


Assume this proposition for a moment. 
If $l=0$ so that $k=k_0$, since $x \geq 1$, we have
\begin{align*}
\pi &\hookrightarrow (\rho|\cdot|^{-x})^a \rtimes \pi_0
\\&\hookrightarrow (\rho|\cdot|^x)^{k_0} \times (\rho|\cdot|^{-x})^a \rtimes D_{\rho|\cdot|^x}^{(k_0)}(\pi_0).
\end{align*}
Hence we have a non-zero map
\[
D_{\rho|\cdot|^x}^{(k)}(\pi) \rightarrow (\rho|\cdot|^{-x})^a \rtimes D_{\rho|\cdot|^x}^{(k_0)}(\pi_0).
\]
Since $D_{\rho|\cdot|^x}^{(k)}(\pi)$ is irreducible, this map must be an injection. 
If $l > 0$, then 
\[
\pi \hookrightarrow 
(\rho|\cdot|^{-x})^{l} \rtimes L((\rho|\cdot|^{-x})^{a-l}, \Delta_\rho[x-1,-x]^b; \pi(\phi, \eta)).
\]
Since $D_{\rho|\cdot|^x}^{(k_0)}(L((\rho|\cdot|^{-x})^{a-l}, \Delta_\rho[x-1,-x]^b; \pi(\phi, \eta)))$
is the highest derivative, we must have 
\begin{align*}
D_{\rho|\cdot|^x}^{(k)}(\pi) 
&= 
D_{\rho|\cdot|^x}^{(k_0)}(L((\rho|\cdot|^{-x})^{a-l}, \Delta_\rho[x-1,-x]^b; \pi(\phi, \eta)))
\\&\hookrightarrow 
(\rho|\cdot|^{-x})^{a-l} \rtimes D_{\rho|\cdot|^{x}}^{(k_0)}(\pi_0).
\end{align*}
Therefore, Proposition \ref{x>1} implies Theorem \ref{main} (for $x > 1$).
\par

Now we prove Proposition \ref{x>1}.
The proof uses Jantzen's strategy. 
\begin{proof}[Proof of Proposition \ref{x>1}]
We note that $x-1 > 0$.
As explained in \cite[\S 3.4]{J3}, 
if $\pi_1 = D_{\rho|\cdot|^{x-1}}^{(\alpha)}(\pi)$, $\pi_2 = D_{\rho|\cdot|^{x}}^{(\beta)}(\pi_1)$ 
and $\pi_3 = D_{\rho|\cdot|^{x-1}}^{(\gamma)}(\pi_2)$ are the highest derivatives, 
then $\Jac_{\rho|\cdot|^x}(\pi_3) = 0$ and
\[
\pi \hookrightarrow 
(\rho|\cdot|^{x})^{\max\{\beta-\alpha-\gamma,0\}} \times (\rho|\cdot|^{x-1})^{\max\{\alpha-\beta+\gamma,0\}} 
\times L(\rho|\cdot|^{x-1},\rho|\cdot|^{x})^{\min\{\alpha,\beta-\gamma\}} \times \Delta_\rho[x,x-1]^{\gamma} 
\rtimes \pi_3.
\]
Hence we see that $D_{\rho|\cdot|^{x}}^{(k)}(\pi)$ is the highest derivative with
\[
k = \max\{\beta-\alpha-\gamma,0\}+\gamma. 
\]
We compute $\alpha, \beta, \gamma$ by a case-by-case consideration
using Jantzen's algorithm (\S \ref{alg}).
We have to consider the following cases separately:
\begin{enumerate}
\item
$m_{2x-1}=0$; 
\item
$\delta_{2x+1} = 0$, $\delta_{2x-1}=0$ and $m_{2x-1} > 0$; 
\item
$\delta_{2x+1} = 1$ and $\delta_{2x-1}=0$; 
\item
$\delta_{2x+1} = 0$, $\delta_{2x-1} = 1$ and $m_{2x-1} \equiv 1 \bmod 2$; 
\item
$\delta_{2x+1} = 1$, $m_{2x+1} \equiv 1 \bmod 2$, $\delta_{2x-1} = 1$ and $m_{2x-1} \equiv 1 \bmod 2$; 
\item
$\delta_{2x+1} = 1$, $m_{2x+1} \equiv 0 \bmod 2$, $\delta_{2x-1} = 1$ and $m_{2x-1} \equiv 1 \bmod 2$; 
\item
$\delta_{2x+1} = 0$, $\delta_{2x-1} = 1$ and $m_{2x-1} \equiv 0 \bmod 2$; 
\item
$\delta_{2x+1} = 1$, $\delta_{2x-1} = 1$ and $m_{2x-1} \equiv 0 \bmod 2$.
\end{enumerate}
For example, 
suppose that $\delta_{2x+1} = 1$, $\delta_{2x-1} = 1$ and $m_{2x-1} \equiv 0 \bmod 2$.
Then with 
$\phi_1 = \phi - (\rho \boxtimes S_{2x-1})^{m_{2x-1}} + (\rho \boxtimes S_{2x-3})^{m_{2x-1}-2}$, 
we see that
\[
\pi_1=
D_{\rho|\cdot|^{x-1}}^{(b+m_{2x-1}-1)}(\pi) 
= 
L\left((\rho|\cdot|^{-x})^{a}, \Delta_\rho[x-2,-x]^b, \Delta_\rho[x-2,-(x-1)]; \pi(\phi_1, \eta_1) \right)
\]
is the highest derivative, so that $\alpha = b+m_{2x-1}-1$. 
Note that 
\[
\pi_1 \hookrightarrow 
L(\rho|\cdot|^{-x}, \Delta_\rho[x-2,-(x-1)]) \times \Delta_\rho[x-2, -(x-1)]^b 
\times (\rho|\cdot|^{-x})^{a+b-1} \rtimes \pi(\phi_1,\eta_1)
\]
if $a>0$.
By \cite[Proposition 3.4 (1)]{J3}, 
with $\phi_2 = \phi_1 - (\rho \boxtimes S_{2x+1})^{m_{2x+1}} + (\rho \boxtimes S_{2x-1})^{m_{2x+1}}$,
we see that
\[
\pi_2=
D_{\rho|\cdot|^{x}}^{(a+b-1+m_{2x+1})}(\pi_1) 
= L(\rho|\cdot|^{-x}, \Delta_\rho[x-2,-(x-1)]^{b+1}; \pi(\phi_2, \eta_2))
\]
is the highest derivative, so that $\beta = a+b-1+m_{2x+1}$. 
By Corollary \ref{ab} for $a=0$, we have
\[
\gamma = k_0 
= \left\{
\begin{aligned}
&m_{2x+1} \iif \text{$b \equiv 1 \bmod 2$}, \\
&m_{2x+1}-1 \iif \text{$b \equiv 0 \bmod 2$}, 
\end{aligned}
\right. 
\]
Therefore, $k-k_0 = \max\{a-m_{2x+1}-\kappa+1,0\}$, 
where $\kappa \in \{0,1\}$ such that $\kappa \equiv b \bmod 2$.
\par

In every case, a similar argument implies that $k=k_0+l$ with $l$ being in the assertion.
\end{proof}

\subsection{The case $x=1$}
In this subsection, we prove: 
\begin{prop}\label{x=1}
Proposition \ref{x>1} holds even when $x=1$.
\end{prop}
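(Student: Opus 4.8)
The plan is to follow the proof of Proposition \ref{x>1} \emph{mutatis mutandis}, i.e.\ to run Jantzen's three-step strategy, the only structural change being that for $x=1$ the ``auxiliary'' derivative is now taken at the exponent $x-1=0$ rather than at a positive half-integer. Concretely, if $\pi_1 = D_{\rho}^{(\alpha)}(\pi)$, $\pi_2 = D_{\rho|\cdot|^1}^{(\beta)}(\pi_1)$ and $\pi_3 = D_{\rho}^{(\gamma)}(\pi_2)$ denote the highest derivatives, then $\Jac_{\rho|\cdot|^1}(\pi_3)=0$ and, arguing as in \cite[\S 3.4]{J3} (with the modifications dictated by the self-duality of $\rho$, see below), one obtains an embedding of $\pi$ into a representation of the shape
\[
(\rho|\cdot|^1)^{\max\{\beta-\alpha-\gamma,0\}} \times (\rho|\cdot|^0)^{\max\{\alpha-\beta+\gamma,0\}} \times L(\rho|\cdot|^0,\rho|\cdot|^1)^{\min\{\alpha,\beta-\gamma\}} \times \Delta_\rho[1,0]^{\gamma} \rtimes \pi_3,
\]
whence $D_{\rho|\cdot|^1}^{(k)}(\pi)$ is the highest derivative with $k=\max\{\beta-\alpha-\gamma,0\}+\gamma$. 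One then computes $\alpha,\beta,\gamma$ by exactly the same eight-case analysis as in the proof of Proposition \ref{x>1} and checks that $k=k_0+l$ in every case; for the middle derivative $D_{\rho|\cdot|^1}^{(\beta)}(\pi_1)$ one invokes the case $a=0$ already settled in \S \ref{s.a=0} (applied to $\pi_1$) together with \cite[Proposition 3.4]{J3}, and the relevant $A$-packet memberships are supplied by Example \ref{ex.xu1'} and Propositions \ref{ex.xu2}, \ref{ex.xu3}, which are the $x=1$ analogues of the inputs used when $x>1$.

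The new phenomenon, and the source of the extra care required, is that $\rho|\cdot|^0=\rho$ is \emph{self-dual}. Hence the outer derivatives $D_\rho^{(\alpha)}$ and $D_\rho^{(\gamma)}$ of tempered representations are governed by Proposition \ref{error}(3) --- so they involve $[m/2]$ and powers of $2$ instead of $m$ --- and $\rho\rtimes\sigma$ need no longer be irreducible for $\sigma$ tempered; accordingly one must track the $L$-parameters and component characters of $\pi_1,\pi_2,\pi_3$ via Proposition \ref{error}, not merely segment lengths. In addition, several linkage and commutation facts that are used freely for $x>1$ --- where $\Delta_\rho[x-1,-x]$, $\Delta_\rho[x-2,-x]$ and $\Delta_\rho[x-1,-(x-1)]$ all have length $\geq 2$ and $\rho|\cdot|^{x-1}$ is not self-dual --- degenerate for $x=1$: $\Delta_\rho[0,-1]$ has length $2$ while $\rho$ and $\rho|\cdot|^{-1}$ have length $1$, and the interaction of $\rho|\cdot|^{-1}$ with $\rho|\cdot|^0=\rho$ (the induced representation $\rho|\cdot|^{-1}\times\rho$ having composition factors $\Delta_\rho[0,-1]$ and $L(\rho|\cdot|^{-1},\rho)$) and with $\Delta_\rho[0,-1]$ must be re-examined wherever it appears in the case analysis.

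I expect the main obstacle to be precisely this bookkeeping in the degenerate cases, above all understanding what becomes of the $(\rho|\cdot|^{-1})^a$ part of the Langlands data of $\pi$ once $a\geq m_\phi(\rho)$, so that not all copies of $\rho|\cdot|^{-1}$ can be absorbed into the tempered part --- this is the mechanism that produces the threshold $l=\max\{a-m_{2x-1}+1,0\}$ or $l=\max\{a-m_{2x-1},0\}$. As an alternative (and a useful consistency check) one can reduce directly to small $a$: since $\rho|\cdot|^{-1}\times\rho|\cdot|^1$ is irreducible, one has $\pi\hookrightarrow(\rho|\cdot|^{-1})^{a-a'}\rtimes L((\rho|\cdot|^{-1})^{a'},\Delta_\rho[0,-1]^b;\pi(\phi,\eta))$ for the threshold value $a'$; applying the case $a=0$ of \S \ref{s.a=0} to the inner representation, using Proposition \ref{ex.xu3} to place $\pi$ in an $A$-packet, and invoking Lemma \ref{xu} pins down $k$, hence $l=k-k_0$. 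Either way, once $\alpha,\beta,\gamma$ --- equivalently, the Langlands data of $\pi_1,\pi_2,\pi_3$ --- have been determined in all eight cases, the identity $k=k_0+l$, and therefore Theorem \ref{main} for $x=1$, follows by a direct comparison exactly as in the case $x>1$.
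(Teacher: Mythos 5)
Your primary plan---running Jantzen's three-step strategy with the auxiliary derivative at $x-1=0$---is precisely the route the paper discards at the start of its proof: "Jantzen's strategy to determine $k$ from $\alpha,\beta,\gamma$ cannot be applied to the case $x=1$." The obstruction is real and you identify it (self-duality of $\rho=\rho|\cdot|^0$) but you do not actually overcome it. When $\rho$ is self-dual, the highest derivative $D_\rho^{(\alpha)}(\pi)$ may be a multiple $m\cdot\pi'$ with $m>1$ (Proposition \ref{highest}), and $\pi$ is not the unique irreducible subrepresentation of $\rho^\alpha\rtimes\pi'$. This breaks the derivation of the embedding
\[
\pi\hookrightarrow(\rho|\cdot|^1)^{\max\{\beta-\alpha-\gamma,0\}}\times\rho^{\max\{\alpha-\beta+\gamma,0\}}\times L(\rho,\rho|\cdot|^1)^{\min\{\alpha,\beta-\gamma\}}\times\Delta_\rho[1,0]^\gamma\rtimes\pi_3,
\]
which in \cite[\S 3.4]{J3} rests on the standard-module and uniqueness properties available only when both exponents are non-self-dual. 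Saying one ``must track the $L$-parameters and component characters via Proposition \ref{error}'' names the difficulty but supplies no argument that replaces the broken uniqueness and composition-series steps; as written, the formula $k=\max\{\beta-\alpha-\gamma,0\}+\gamma$ is not justified for $x=1$.

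Your ``alternative'' sketch at the end is the right shape and is closer to what the paper does (bound $k$ by $k_0+l$ via the embedding $\pi\hookrightarrow(\rho|\cdot|^{-1})^l\rtimes L((\rho|\cdot|^{-1})^{a-l},\Delta_\rho[0,-1]^b;\pi(\phi,\eta))$, then use the $a=0$ case and the $A$-packet bound of Lemma \ref{xu}). But Lemma \ref{xu} only gives an \emph{upper} bound on $k$, and what you still owe is the matching \emph{lower} bound $k\geq k_0+l$ when $l>0$. The irreducibility of $\rho|\cdot|^{-1}\times\rho|\cdot|^1$ is not the input that produces it. The paper's proof hinges on a genuinely new irreducibility result, Proposition \ref{irr1}: for $\pi'=L((\rho|\cdot|^{-1})^{a'};\pi(\phi,\eta))$ the induced representation $\rho|\cdot|^{-1}\rtimes\pi'$ is irreducible exactly when $a'\geq m_\phi(\rho)-\delta_3$. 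That irreducibility is what lets one replace the "excess" $\rho|\cdot|^{-1}$'s by $\rho|\cdot|^1$'s in the embedding chain
\[
\pi\hookrightarrow\rho^b\times(\rho|\cdot|^{-1})^{b+a-m_1+\delta_3}\rtimes L((\rho|\cdot|^{-1})^{m_1-\delta_3};\pi(\phi,\eta))\cong\rho^b\times(\rho|\cdot|^{1})^{b+a-m_1+\delta_3}\rtimes L((\rho|\cdot|^{-1})^{m_1-\delta_3};\pi(\phi,\eta)),
\]
yielding $k\geq a-m_1+m_3=k_0+l$. Without formulating and proving something like Proposition \ref{irr1}, your proposal cannot pin down $k$ from below, so the argument is incomplete.
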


As explained after Proposition \ref{x>1}, 
this proposition also implies Theorem \ref{main} for $x=1$. 
Note that Jantzen's strategy to determine $k$ from $\alpha, \beta, \gamma$ 
cannot be applied to the case $x=1$. 
Instead of this, we use the following proposition.
\begin{prop}\label{irr1}
Let $\phi \in \Phi_\gp(G)$ and $\eta \in \widehat{\Sc_\phi}$. 
Suppose that $\rho$ is self-dual of the same type as $\phi$. 
Set 
\[
\delta_3 = \left\{
\begin{aligned}
&1	
\iif \text{$\phi \supset \rho \boxtimes (S_1+S_3)$ and $\eta(\rho \boxtimes S_{1}) \not= \eta(\rho \boxtimes S_{3})$}, \\
&0	\other.
\end{aligned}
\right. 
\]
Consider $\pi = L((\rho|\cdot|^{-1})^a; \pi(\phi, \eta))$. 
Then $\rho|\cdot|^{-1} \rtimes \pi$ is irreducible if and only if $a \geq m_\phi(\rho)-\delta_3$.
\end{prop}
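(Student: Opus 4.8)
The plan is first to reduce the statement to a question about a single Jacquet module, and then to settle that question with the $A$-packet machinery and M{\oe}glin's construction. Write $\pi_j = L((\rho|\cdot|^{-1})^j;\pi(\phi,\eta))$, so $\pi = \pi_a$ and $(\rho|\cdot|^{-1})^{a+1}\rtimes\pi(\phi,\eta)$ is a standard module with unique irreducible subrepresentation $\pi_{a+1}$, appearing with multiplicity one. Since parabolic induction is exact, $\rho|\cdot|^{-1}\rtimes\pi_a\hookrightarrow(\rho|\cdot|^{-1})^{a+1}\rtimes\pi(\phi,\eta)$, so $\pi_{a+1}$ is the unique irreducible subrepresentation of $\rho|\cdot|^{-1}\rtimes\pi_a$ and occurs there with multiplicity one; hence $\rho|\cdot|^{-1}\rtimes\pi_a$ is irreducible if and only if $\pi_{a+1}$ is also a quotient of it. Using that the contragredient of $\tau\rtimes\sigma$ is $\tau^\vee\rtimes\sigma^\vee$, that $\rho^\vee\cong\rho$, and that every irreducible representation of $G_n$ is self-dual, we get $(\rho|\cdot|^{-1}\rtimes\pi_a)^\vee\cong\rho|\cdot|^{1}\rtimes\pi_a$. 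Dualizing and applying Frobenius reciprocity, $\pi_{a+1}$ is a quotient of $\rho|\cdot|^{-1}\rtimes\pi_a$ exactly when $\pi_{a+1}\hookrightarrow\rho|\cdot|^{1}\rtimes\pi_a$, i.e.\ exactly when $\rho|\cdot|^{1}\boxtimes\pi_a$ occurs in the cosocle of $\semi\Jac_{P_d}(\pi_{a+1})$. So the whole proposition comes down to deciding, for each $a$, whether $\rho|\cdot|^{1}\boxtimes\pi_a$ occurs there.

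Next I would analyze $\Jac_{P_d}(\pi_{a+1})$. By Tadi{\'c}'s formula (Theorem \ref{tadic}), $M^*(\rho|\cdot|^{-1}) = \rho|\cdot|^{1}\otimes 1 + \rho|\cdot|^{-1}\otimes 1 + 1\otimes\rho|\cdot|^{-1}$, and since $\rho|\cdot|^{1}\times\rho|\cdot|^{-1}$ is irreducible, the part of $\semi\Jac_{P_d}((\rho|\cdot|^{-1})^{a+1}\rtimes\pi(\phi,\eta))$ with $GL_d$-factor $\rho|\cdot|^{1}$ is, up to multiplicities, $\rho|\cdot|^{1}\boxtimes((\rho|\cdot|^{-1})^{a}\rtimes\pi(\phi,\eta))$ plus a contribution built from $\Jac_{\rho|\cdot|^{1}}(\pi(\phi,\eta))$, which is described by Proposition \ref{error}. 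The base case $a=0$ is then Theorem \ref{irr0} applied to $\rho\boxtimes S_3$: since $\rho|\cdot|^{-1}\rtimes\pi(\phi,\eta)$ and $\rho|\cdot|^{(3-1)/2}\rtimes\pi(\phi,\eta)$ have the same semisimplification, reading off its three cases with $m=m_\phi(\rho)$ gives reducibility exactly when $0<m_\phi(\rho)-\delta_3$. When $m_\phi(\rho\boxtimes S_3)=0$ the general $a$ is also immediate: for $a+1\le m_\phi(\rho)$, Example \ref{ex.xu1}(2) places $\pi_{a+1}$ in $\Pi_{\psi_{a+1}}$ with $\psi_{a+1}=\phi-\rho^{a+1}+(\rho\boxtimes S_1\boxtimes S_3)^{a+1}$, and Lemma \ref{xu} forces $\Jac_{\rho|\cdot|^{1}}(\pi_{a+1})=0$ (no component $\rho_i\boxtimes S_{a_i}\boxtimes S_{b_i}$ of $\psi_{a+1}$ has $\rho_i\cong\rho$ and $(a_i-b_i)/2=1$), so $\rho|\cdot|^{-1}\rtimes\pi_a$ is reducible; for $a\ge m_\phi(\rho)$ (which equals $m_\phi(\rho)-\delta_3$ here) one gets irreducibility by an induction on $a$ starting from the $a=0$ case.

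The remaining cases, $m_\phi(\rho\boxtimes S_3)\ge 1$, are where the real work — and the main obstacle — lie: here $\Jac_{\rho|\cdot|^{1}}(\pi_{a+1})$ is typically nonzero, and one must determine precisely whether the specific irreducible $\rho|\cdot|^{1}\boxtimes\pi_a$ sits in its cosocle. I expect this to require: (i) M{\oe}glin's construction (Theorem \ref{moe}), especially its Jacquet-module formula, to identify the relevant subquotients of $\pi_{a+1}$ when $a+1\le m_\phi(\rho)$, together with Proposition \ref{ex.xu3}; (ii) careful tracking of the sign character $\eta$ through the derivatives of Proposition \ref{error}, and of the parity of $m_\phi(\rho\boxtimes S_3)$ together with the equality $\eta(\rho)=\eta(\rho\boxtimes S_3)$ — exactly the data defining $\delta_3$ — to locate the obstruction; and (iii) Proposition \ref{irr01}(2), which supplies the natural candidate for a second irreducible constituent of $\rho|\cdot|^{-1}\rtimes\pi_a$ when $\phi\supset\rho\boxtimes(S_1+S_3)$ with $\eta(\rho)=\eta(\rho\boxtimes S_3)$. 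Arranging all of this as an induction on $a$ (and, as a secondary parameter, on $m_\phi(\rho\boxtimes S_3)$), with the $m_\phi(\rho\boxtimes S_3)=0$ analysis and the $a=0$ case above serving as the base, should then yield the stated equivalence $a\ge m_\phi(\rho)-\delta_3$.
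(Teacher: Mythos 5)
Your reduction to a single Jacquet-module question is a sound and rather clean observation: since $\pi_{a+1}$ is the socle of $\rho|\cdot|^{-1}\rtimes\pi_a$ with multiplicity one, irreducibility is indeed equivalent to $\pi_{a+1}$ also being a quotient, i.e.\ to $\pi_{a+1}\hookrightarrow\rho|\cdot|^{1}\rtimes\pi_a$. That is a route the paper does not take. The $a=0$ base case via Theorem \ref{irr0} and the reducibility direction in the case $m_\phi(\rho\boxtimes S_3)=0$ via Example \ref{ex.xu1} and Lemma \ref{xu} are both correctly argued.

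However, the proposal does not actually prove the proposition; it outlines one. Two concrete gaps:

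First, even in the ``easy'' case $m_3=0$, the irreducibility for $a\ge m_\phi(\rho)$ is not established. You write that one ``gets irreducibility by an induction on $a$ starting from the $a=0$ case,'' but when $m_\phi(\rho)>0$ the case $a=0$ yields reducibility, not irreducibility, so it cannot serve as the base. The actual base is $a=m_\phi(\rho)-\delta_3$, and you give no argument for it; nor do you say what the inductive step would be (one cannot simply induce up from a reducible case). The paper handles precisely this point by an induction in which one supposes $\rho|\cdot|^{-1}\rtimes\pi$ reducible, classifies the possible irreducible quotient $\sigma$ using \cite[Proposition~4.1]{J2}, applies the induction hypothesis to recognize $\sigma$ as an irreducible induction, and then uses Lemma \ref{xu} to obtain a derivative count that contradicts the $A$-packet bound for $\pi$. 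None of that structure appears in your outline.

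Second, for the central case $m_\phi(\rho\boxtimes S_3)\ge1$ you explicitly defer the argument (``I expect this to require \dots''), listing M{\oe}glin's construction, sign bookkeeping, and Proposition \ref{irr01}(2) as likely ingredients. That is a research plan, not a proof: the hard step --- deciding whether $\rho|\cdot|^{1}\boxtimes\pi_a$ lies in the cosocle of $\Jac_{P_d}(\pi_{a+1})$ when $\Jac_{\rho|\cdot|^1}(\pi_{a+1})\ne0$ --- is exactly what must be carried out, and it is left undone. For comparison, the paper avoids the cosocle analysis entirely: reducibility for $a<m_\phi(\rho)-\delta_3$ is obtained directly from the derivative bound of Lemma \ref{xu} after placing $\pi$ in a suitable $A$-packet (Example \ref{ex.xu1} and Proposition \ref{ex.xu3}), and irreducibility at $a=m_\phi(\rho)-\delta_3$ is the inductive contradiction argument just described. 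So the approaches genuinely diverge, and the paper's has the advantage of never needing to determine a cosocle explicitly.
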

\begin{proof}
Write $m_1 = m_\phi(\rho)$ and $m_3 = m_\phi(\rho \boxtimes S_3)$.
By Theorem \ref{irr0}, we may assume that $a > 0$.
When $a \leq m_1$, by Example \ref{ex.xu1}, 
we have $\pi \in \Pi_{\psi_0}$ with $\psi_0 = \phi - \rho^a + (\rho \boxtimes S_1 \boxtimes S_3)^a$.
Also, if $\delta_3 = 1$ and $a \leq m_1-1$, 
by Proposition \ref{ex.xu3}, we have $\pi \in \Pi_{\psi_1}$ with
\begin{align*}
\psi_1 = \phi 
&- \rho^a + (\rho \boxtimes S_1 \boxtimes S_3)^a
\\&- (\rho + \rho \boxtimes S_3) + \rho \boxtimes S_2 \boxtimes S_2. 
\end{align*}
In particular, when $a \leq m_1-\delta_3$, 
if $D_{\rho|\cdot|^{1}}^{(k)}(\pi) \not= 0$, then $k \leq m_3-\delta_3$ by Lemma \ref{xu}. 
Since 
\begin{align*}
\pi &\hookrightarrow (\rho|\cdot|^{-1})^{a} \rtimes \pi(\phi, \eta)
\\&\hookrightarrow 
(\rho|\cdot|^1)^{m_3-\delta_3} \times (\rho|\cdot|^{-1})^{a} 
\rtimes D_{\rho|\cdot|^1}^{(m_3-\delta_3)}(\pi(\phi, \eta)), 
\end{align*}
We have $D_{\rho|\cdot|^{1}}^{(m_3-\delta_3)}(\pi) \not= 0$. 
Since it is irreducible, we have
\[
D_{\rho|\cdot|^{1}}^{(m_3-\delta_3)}(\pi) \hookrightarrow (\rho|\cdot|^{-1})^{a} 
\rtimes D_{\rho|\cdot|^1}^{(m_3-\delta_3)}(\pi(\phi, \eta)).
\]
This implies the reducibility of $\rho|\cdot|^{-1} \rtimes \pi$ when $a < m_1-\delta_3$. 
\par

Suppose that $a = m_1 - \delta_3 > 0$.
We prove the irreducibility of $\rho|\cdot|^{-1} \rtimes \pi$ by induction on $a$.
Suppose that $\rho|\cdot|^{-1} \rtimes \pi$ is reducible and choose an irreducible quotient $\sigma$. 
Note that $D_{\rho|\cdot|^{-1}}^{(a)}(\sigma) \not=0$. 
By \cite[Proposition 4.1]{J2}, 
$\sigma = L((\rho|\cdot|^{-1})^a; \pi(\phi_1, \eta_1))$ or 
$\sigma = L((\rho|\cdot|^{-1})^a, \Delta_\rho[0,-1]; \pi(\phi_2, \eta_2))$ with
\begin{align*}
\phi_1 &= \phi - \rho + \rho \boxtimes S_3, \quad
\phi_2 = \phi - \rho^2.
\end{align*}
We consider the former case $\sigma = L((\rho|\cdot|^{-1})^a; \pi(\phi_1, \eta_1))$ 
with $\phi_1 = \phi - \rho + \rho \boxtimes S_3$. 
Note that $m_{\phi_1}(\rho) = m_1 -1 \geq \delta_3$.
By induction hypothesis, 
$\sigma = \rho|\cdot|^{-1} \rtimes L((\rho|\cdot|^{-1})^{a-1}; \pi(\phi_1, \eta_1))$ 
is an irreducible induction. 
In particular, $D_{\rho|\cdot|^1}^{(m_3+2-\delta_3)}(\sigma) \not= 0$. 
Hence $D_{\rho|\cdot|^1}^{(m_3+2-\delta_3)}(\rho|\cdot|^{-1} \rtimes \pi) \not= 0$, 
which implies that $D_{\rho|\cdot|^1}^{(m_3+1-\delta_3)}(\pi) \not= 0$.
This contradicts Lemma \ref{xu}.
Now, we consider the latter case 
$\sigma = L((\rho|\cdot|^{-1})^a, \Delta_\rho[0,-1]; \pi(\phi_2, \eta_2))$
with $\phi_2 = \phi - \rho^2$ so that $m_1 \geq 2$.
Note that $\Jac_\rho(\sigma)$ is nonzero and irreducible (up to multiplicity)
since $D_{\rho}^{(2)}(\pi) = 0$ by Lemma \ref{xu}. 
If $\sigma'$ is the unique irreducible component of $\Jac_\rho(\sigma)$, 
then
\[
\sigma' = L((\rho|\cdot|^{-1})^{a+1}; \pi(\phi_2, \eta_2)).
\]
Recall that $a+1 = m_1+1-\delta_3$. 
By induction hypothesis, 
\[
\sigma' = 
\left\{
\begin{aligned}
&(\rho|\cdot|^{-1})^3 \rtimes L((\rho|\cdot|^{-1})^{m_1-2-\delta_3}; \pi(\phi_2, \eta_2)) \iif m_1>2, \\
&(\rho|\cdot|^{-1})^{3-\delta_3} \rtimes L((\rho|\cdot|^{-1})^{m_1-2}; \pi(\phi_2, \eta_2)) \iif m_1=2
\end{aligned}
\right. 
\]
is an irreducible induction.
Hence we have $D_{\rho|\cdot|^1}^{(m_3+3-\delta_3)}(\sigma') \not= 0$. 
Therefore we have 
\begin{align*}
&D_{\rho|\cdot|^1}^{(m_3+3-\delta_3)}(\Jac_\rho(\rho|\cdot|^{-1} \rtimes \pi)) \not= 0
\\&\implies 
D_{\rho|\cdot|^1}^{(m_3+2-\delta_3)}(\rho|\cdot|^{-1} \rtimes \pi) \not= 0
\\&\implies
D_{\rho|\cdot|^1}^{(m_3+1-\delta_3)}(\pi) \not= 0, 
\end{align*}
which contradicts Lemma \ref{xu}.
\par

The case where $a > m_1-\delta_3$ follows from the case $a = m_1-\delta_3$.
This completes the proof.
\end{proof}

Now we prove Proposition \ref{x=1}
\begin{proof}[Proof of Proposition \ref{x=1}]
Recall that $\pi = L((\rho|\cdot|^{-1})^a, \Delta_\rho[0,-1]^b; \pi(\phi, \eta))$ 
and $\pi_0 = L(\Delta_\rho[0,-1]^b; \pi(\phi, \eta))$, 
and that $D_{\rho|\cdot|^1}^{(k)}(\pi)$ and $D_{\rho|\cdot|^1}^{(k_0)}(\pi_0)$ are the highest derivatives. 
Set
\[
l = \left\{
\begin{aligned}
&\max\{a-m_{1}+1,0\} 
\iif m_{1}m_{3} \not=0, \delta_{3}\not=\kappa,\\
&\max\{a-m_{1},0\} 
\other.
\end{aligned}
\right. 
\]
We will prove that $k-k_0 = l$.
\par

Note that 
\[
\pi \hookrightarrow (\rho|\cdot|^{-1})^{l} \rtimes L((\rho|\cdot|^{-1})^{a-l}, \Delta_\rho[0,-1]^b; \pi(\phi, \eta)). 
\]
By applying Example \ref{ex.xu1} and Proposition \ref{ex.xu3} to 
$L((\rho|\cdot|^{-1})^{a-l}, \Delta_\rho[0,-1]^b; \pi(\phi, \eta))$, 
we see that 
$D_{\rho|\cdot|^{1}}^{(k_0)}(L((\rho|\cdot|^{-1})^{a-l}, \Delta_\rho[0,-1]^b; \pi(\phi, \eta)))$ 
is the highest derivative. 
Hence we have $k \leq k_0+l$. 
Also, we see that if $l=0$, then $k=k_0$. 
\par

Assume that $l>0$.
Since $\pi$ is a unique irreducible subrepresentation of 
\[
(\rho|\cdot|^{-1})^a \times \Delta_{\rho}[0,-1]^b \rtimes \pi(\phi, \eta) 
\cong
\Delta_{\rho}[0,-1]^b \times (\rho|\cdot|^{-1})^a \rtimes \pi(\phi, \eta), 
\]
by Proposition \ref{irr1}, we see that 
\begin{align*}
\pi 
&\hookrightarrow 
\Delta_{\rho}[0,-1]^b \times (\rho|\cdot|^{-1})^{a-m_1+\delta_3} 
\rtimes L((\rho|\cdot|^{-1})^{m_1-\delta_3}; \pi(\phi, \eta)) 
\\&\hookrightarrow
\rho^b \times (\rho|\cdot|^{-1})^{b+a-m_1+\delta_3} 
\rtimes L((\rho|\cdot|^{-1})^{m_1-\delta_3}; \pi(\phi, \eta)) 
\\&\cong 
\rho^b \times (\rho|\cdot|^{1})^{b+a-m_1+\delta_3} 
\rtimes L((\rho|\cdot|^{-1})^{m_1-\delta_3}; \pi(\phi, \eta)) 
\\&\hookrightarrow
\rho^b \times (\rho|\cdot|^{1})^{b+a-m_1+m_3} 
\rtimes D_{\rho|\cdot|^1}^{(m_3-\delta_3)}(L((\rho|\cdot|^{-1})^{m_1-\delta_3}; \pi(\phi, \eta))). 
\end{align*}
This implies that $k \geq a-m_1+m_3$. 
Since $k_0+l = a-m_1+m_3$, we conclude that $k=k_0+l$. 
\end{proof}


\end{document}